\documentclass{amsart}
\usepackage{bbm}
\usepackage{lipsum}
\usepackage{enumerate}
\usepackage{showlabels}
\usepackage{amsmath,amsthm,amsfonts,amssymb,amscd,mathrsfs}
\usepackage[pdftex, colorlinks=true]{hyperref}
\newtheorem*{Mheo}{Main Theorem}
\newtheorem{theo}{Theorem}[section]

\newtheorem{lemma}{Lemma}[section]
\newtheorem{cor}{Corollary}[section]
\newtheorem{defi}{Definition}[section]
\newtheorem{ques}{Question}[section]
\newtheorem{rem}{Remark}[section]
\newtheorem*{Claim}{Claim}
\newtheorem{CClaim}{Claim}
\newtheorem*{Fact}{Fact}
\usepackage{graphicx}
 \usepackage[all]{xy}
 
\usepackage{xcolor}
\usepackage{boxhandler}
\usepackage{caption}
\usepackage{lipsum}
\captionsetup{
   margin=1cm,
   font=small,
   format=plain,
   labelfont={bf,up},
  textfont={it}
}
\captionsetup[figure]{ labelsep=colon}

\def \ex {\mathsf{ex}}

\def \Id {\mathsf{Identity}}

\def \Per {\mathsf{Per}}
\def \pper{\mathsf{per}}
\def \ap{\mathsf{ap}}
\def \dim{\mathsf{dim}}
\def \htop{\mathsf{h_{top}}}
\def \deg {\mathsf{deg}}
\def \com {\mathsf{comp}}
\def \Res {\mathsf{Res}}
\def \Exp {\mathsf{exp}}
\begin{document}


\begin{large}
\title{Periodic expansiveness of smooth surface diffeomorphisms and applications}

\author{DAVID BURGUET}
\address{LPMA - CNRS UMR 7599 \\
\newline
Universite Paris 6 \\
 75252 Paris Cedex 05 FRANCE\\
 \newline
david.burguet@upmc.fr}


\begin{abstract}
We prove that periodic asymptotic expansiveness introduced in \cite{em} implies the equidistribution of periodic points to   measures of maximal entropy. Then following Yomdin's approach \cite{Yom} we show by using semi-algebraic tools  that $C^\infty$ interval maps and $C^\infty$ surface diffeomorphisms satisfy this expansiveness property respectively for repelling and saddle hyperbolic points with Lyapunov exponents uniformly away from zero.
\end{abstract}

\subjclass[2010]{Primary  37C05, 	37C25, 	37D25, 	28D20, 37E30, 37C40; Secondary 	14P10.}

\keywords{Entropy, hyperbolic periodic points, smooth surface dynamical systems,  Yomdin's theory, semi-algebraic  geometry.}

\maketitle
\section{Introduction}
In this paper we establish new relations between the entropy and the growth of periodic points of smooth dynamical systems.
 In general the topological entropy may not be equal to the  exponential growth in $n$ of $n$-periodic points. Of course it may be less, e.g. when the system has uncountably many periodic points, but it may also be larger : there are even  minimal (therefore aperiodic) smooth systems with positive entropy \cite{Her}. However these two quantities coincide in many cases, in particular under some expansive and specification properties. Then one may also wonder if periodic points are equidistributed with respect to measures of maximal entropy.

 For expansive homeomorphisms with the standard specification property R. Bowen   proved  in \cite{bow} the equality between the entropy and the growth of periodic points, together with the equidistribution of periodic points with respect to  the unique invariant measure of maximal entropy. In particular these properties hold true for topologically transitive subshifts of finite type and Axiom A systems \cite{boww}. Also by a  celebrated result of A. Katok \cite{Kat} any  $C^{1+\alpha}$ surface diffeomorphism admits  hyperbolic horseshoes with entropy arbitrarily close to the topological entropy so that the exponential  growth in $n$ of saddle (hyperbolic) $n$-periodic points  is larger than or equal to the topological entropy. \\

  Here we will prove the following  result: 	\begin{Mheo}
Let $f:M\rightarrow M$ be a $C^\infty$ diffeomorphism of a compact surface $M$ (resp. a $C^\infty$ interval map) with positive topological entropy $\htop(f)>0$.

Then for any $\delta\in ]0,\htop(f)[$  the set $\Per_{n}^\delta$ of saddle (resp. repelling) $n$-periodic points with Lyapunov exponents $\delta$-away from zero grows exponentially in $n$ as the topological entropy. Moreover these periodic points are equidistributed with respect to measures of maximal entropy, i.e.:

\begin{itemize}
\item $ \limsup_{n\rightarrow +\infty}\frac{1}{n}\log \sharp \Per_{n}^\delta=\htop(f), $
\item  for all increasing sequences of positive integers $(n_k)_k$ satisfying $$ \lim_{k\rightarrow +\infty}\frac{1}{n_k}\log \sharp \Per_{n_k}^\delta=\htop(f),$$
any weak-star limit $\mu$ of the sequence $\left(\frac{1}{\sharp \Per_{n_k}^\delta}\sum_{x\in \Per_{n_k}^\delta}\delta_x\right)_{k }$ is an $f$-invariant measure of maximal entropy, i.e.:
$$h(\mu)=\htop(f).$$ \\
\end{itemize}
\end{Mheo}

We list now some comments and earlier related works enlightening the above statement:\\

\textit{\textsf{i. Periodic points with small Lyapunov exponents.}} For $1\leq r\leq \infty$  V. Kaloshin \cite{Kal} has proved that  in Newhouse domains (i.e. $C^r$  open sets with a dense subset of diffeomorphisms having an homoclinic tangency) generic $C^r$ smooth  surfaces diffeomorphisms have arbitrarily fast growth of saddle periodic points (see \cite{asa} for analytic examples). Kaloshin stated his result for finite $r$, but his proof  also works for $r=\infty$. For these $C^\infty$ surface diffeomorphisms we can therefore not replace
the sets $\Per_n^\delta$ with the sets $\Per_n$ of all $n$-periodic points in our Main Theorem. In fact, it follows from the Main Theorem that  any  family of saddle periodic points of  a surface diffeomorphism  with exponential growth larger than the topological entropy contains periodic points with an arbitrarily small Lyapunov exponent. Finally we  also recall that by Kupka-Smale theorem \cite{K,S} all periodic points of a $C^r$ generic diffeomorphism are hyperbolic (in particular the sets $\Per_n$ are finite for all positive integers $n$).\\

\textsf{\textit{ii. Systems with Lyapunov exponents uniformly away from zero.} }When all invariant measures are hyperbolic with Lyapunov exponents uniformly away from zero then one may consider in the Main Theorem the  set $\Per_n$ of $n$-periodic points (because  in this case we have $\Per_n^\delta=\Per_n$ for some positive $\delta$). Such nonuniformly hyperbolic smooth surface diffeomorphisms have been built and studied in \cite{Cao,Ta}. More recently P. Berger proved that Henon like diffeomorphisms satisfy this property and our result recover then the equidistribution property shown in \cite{Ber}. \\

\textsf{\textit{iii. Comparison with a weaker form due to Chung and Hirayama.}} A version of the above theorem was proved for $C^{1+\alpha}$ surface diffeomorphisms $f$ in  \cite{ch} (see also \cite{Gel}), where the authors considered for some fixed $c,\delta>0$ 	and  for any positive integer $n$ the subset $\Per^n_\delta(c)$ of $x\in \Per_\delta^n$ with   $\|T_{f^ix}f^k\|, \|(T_{f^ix}f^k)^{-1}\|\geq ce^{k \delta }$ for all $k\in \mathbb{N}$ and for all $0\leq i <n$. The union $\bigcup_n\Per^n_\delta(c)$ is the set of  periodic points of its closure, say $K_{\delta,c}:=\overline{\bigcup_n\Per^n_\delta(c)}$. Moreover $K_{\delta,c}$ is a uniformly hyperbolic  set. According to  the  aforementioned pioneer work of R. Bowen the exponential growth in $n$ of $\Per_n^\delta(c)$ coincides with the topological entropy restricted to $K_{\delta,c}$. Then by  Katok's Theorem one concludes that  the exponential growth in $n$ of $\Per_n^\delta(c)$ goes to $\htop(f)$ when $c$ and $\delta$ go to zero.  Under $C^\infty$ smoothness assumption our Main Theorem shows the stronger fact that
 the exponential growth in $n$ of $\bigcup_{c>0}\Per_n^{\delta}(c)$ is equal to the topological entropy. By using Bowen's result Chung and Hirayama also proved that periodic points in $\Per_n^\delta(c)$ are equidistributed with respect to an invariant measure $\mu_c$ which is converging to a measure of maximal entropy when $c$ goes to zero.    \\

\textsf{ \textit{iv. The case of topological Markov shifts.}} A similar picture occurs in the non compact setting of connected topological Markov shifts with a countable  set of states.
Indeed the (Gurevic's) entropy of such a Markov shift, which may be defined as the supremum of the measure theoretic  entropies of invariant ergodic probability measures, coincides with  the supremum of the topological entropy of  finite subgraphs. As for a $C^{1+\alpha}$ surface diffeomorphism the system thus contains a family of subshifts of finite type  with entropy arbitrarily close to the entropy of the  system. Although  periodic points are equidistributed with respect to  measures of maximal entropy for these finite subgraphs, this  is false in general for the topological Markov shift. By a result of Vere-Jones \cite{ver}  it holds true for the so called \textit{strongly positive recurrent} topological Markov shifts (see  Proposition 2.3 in \cite{verb} for some (equivalent) definitions of strongly positive recurrence).\\

\textsf{\textit{v. Positive recurrence in Sarig's Markov representation.}} The above results must be compared with the recent work of O. Sarig  about the coding of $C^{1+\alpha}$ surface diffeomorphisms.
In \cite{sar} he built  for any $\delta>0$ a finite to one topological Markov shift extension of any surface diffeomorphism on a set of full measure for any hyperbolic ergodic measure with Lyapunov exponents  $\delta$-away from zero. Then the finite subgraphs of this Markov shift correspond to the hyperbolic sets. However it is unknown   if the connected components of this Markov shift are strongly positive recurrent even for $C^\infty$ surface diffeomorphisms.\\

\textsf{\textit{vi. Existence and  finiteness of ergodic measures of maximal entropy.}}
Building on Yomdin's theory, S. Newhouse proved that any $C^\infty$ map on  a compact manifold admits an ergodic  measure of maximal entropy \cite{new} (see also \cite{Buz}).  J. Buzzi  proved also in \cite{Buz} that there were only finitely many such measures for $C^\infty$ interval maps with positive topological entropy. It was very recently shown   by using  Sarig's Markov representation that it also holds true for surface diffeomorphisms \cite{OBC}.
 In our Main Theorem  we do not know how to  identify the limits $\mu$ of periodic measures in
$\Per^\delta$ (in general the measures $\mu$ are not ergodic). For topologically transitive such systems,   uniqueness of the measure of maximal entropy has been established in \cite{Buz} \cite{OBC} so that  periodic points in $\Per^\delta$ are  actually equidistributed with respect to this measure. \\

\textsf{\textit{vii. Lower bound on the growth of periodic points.}} By applying Gurevic's theory to the Markov representations of Sarig \cite{sar} and Buzzi  \cite{Buz} for respectively smooth surface diffeomorphisms and interval maps, these dynamical systems when they admit a measure of maximal entropy (in particular in the $C^\infty$ case) satisfy  the following estimate\footnote{In the present paper $\mathbb{N}:=\{0,1,...,\}$ will denote the set of nonnegative integers.}:
$$\exists p\in \mathbb{N}\setminus \{0\}, \ \liminf_{n\rightarrow +\infty, \ p\mid n }e^{-n \htop(f)} \sharp \Per_n>0.$$
We will prove in the Appendices that we can  replace in the above inequality the set $\Per_n$ by $\Per_n^{\delta}$ for any positive $\delta$ less than the topological entropy. This follows from the construction of Sarig in the first case whereas for interval maps we have to slightly modify the Markov representation of Buzzi. In particular we get the following statement.

\begin{cor}
With the assumptions and notations  of the Main Theorem,  there exists a positive integer $p$ such that :
\begin{itemize}
\item $ \lim_{n\rightarrow +\infty, \ p\mid n }\frac{1}{n}\log \sharp \Per_{n}^\delta=\htop(f), $
\item  any weak-star limit of $\left(\frac{1}{\sharp \Per_{n}^\delta}\sum_{x\in \Per_{n}^\delta}\delta_x\right)_{n, \ p\mid n }$ is a measure of maximal entropy.  \\
\end{itemize}
\end{cor}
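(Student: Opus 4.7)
The plan is to upgrade the $\limsup$-equality of the Main Theorem to a genuine limit along an arithmetic progression $p\mathbb{N}$, by supplying a matching lower bound. Once the two-sided estimate
\[ \liminf_{n\to+\infty,\ p\mid n} e^{-n\htop(f)}\sharp \Per_n^\delta \;>\; 0 \]
is combined with the upper bound $\limsup_{n} \frac{1}{n}\log\sharp \Per_n^\delta \leq \htop(f)$ of the Main Theorem, the first bullet of the corollary follows at once. The equidistribution bullet is then immediate from the second bullet of the Main Theorem: since the full sequence indexed by $n\in p\mathbb{N}$ realizes the limit $\htop(f)$ itself and not merely the $\limsup$, every one of its weak-star accumulation points is an invariant measure of maximal entropy.

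To prove the lower bound I would invoke the countable-state Markov extensions of Sarig \cite{sar} (in the surface case) and of Buzzi \cite{Buz} (in the interval case). Each provides a finite-to-one factor map $\pi:\Sigma\to M$ which codes orbits whose Lyapunov exponents are $\delta$-away from zero, and which sends $n$-periodic points of $\Sigma$ into $\Per_n^\delta$ up to bounded multiplicity. Since there are only finitely many ergodic measures of maximal entropy by \cite{Buz,OBC}, only finitely many irreducible components $\Sigma_1,\dots,\Sigma_k$ of $\Sigma$ carry Gurevic entropy equal to $\htop(f)$. After verifying that each $\Sigma_i$ is \emph{strongly positive recurrent} in the sense of Vere-Jones, I would invoke his theorem \cite{ver} to produce, for each $i$, a period $p_i\in \mathbb{N}\setminus\{0\}$ and a constant $c_i>0$ satisfying
\[ \sharp\{\,x\in \Sigma_i : \sigma^n x=x\,\} \;\geq\; c_i\, e^{n\htop(f)} \qquad \text{whenever } p_i\mid n, \]
then take $p$ to be the least common multiple of the $p_i$, and push the estimate down through $\pi$ while dividing by the uniform multiplicity bound.

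The step I expect to be the main obstacle is the verification of strong positive recurrence itself. For Sarig's surface representation this property is essentially built into the construction, so the surface case reduces to identifying which irreducible components carry the maximal entropy and quoting the relevant estimate. For interval maps, however, Buzzi's original Markov diagram does not \emph{a priori} produce strongly positive recurrent components on every entropy-maximising piece: the construction must be slightly modified so that the components in question become strongly positive recurrent while preserving both the $\delta$-hyperbolicity of the coded periodic orbits and the essentially finite-to-one nature of $\pi$ on $\Per_n^\delta$. These two modifications, one per appendix, form the technical core of the proof.
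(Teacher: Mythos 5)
Your overall strategy mirrors the paper's: supply a lower bound $\liminf_{n\to\infty,\, p\mid n} e^{-n\htop(f)}\sharp\Per_n^\delta > 0$ via the Markov representations of Sarig and Buzzi together with Gurevic/Vere-Jones theory, then combine with the Main Theorem's $\limsup$-equality to get both bullets. However, your execution has a concrete error concerning \emph{which} recurrence property you need and whether you can obtain it.

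You propose to verify \emph{strong} positive recurrence of the relevant irreducible components. This is not obtainable: the paper states explicitly (comment \textit{\textsf{v.}} in the Introduction) that ``it is unknown if the connected components of this Markov shift are strongly positive recurrent even for $C^\infty$ surface diffeomorphisms.'' Your claim that for Sarig's representation this ``is essentially built into the construction'' therefore contradicts the known state of the art, and the interval-map modification you propose (to ``make'' the components strongly positive recurrent) has no justification. Fortunately, strong positive recurrence is not what the lower bound requires. Vere-Jones' theorem already gives, for every \emph{positive recurrent} irreducible countable-state Markov shift $\Sigma_i$ with Gurevic entropy $h$ and period $p_i$, that $\liminf_{p_i\mid n} e^{-nh}\sharp\{x\in\Sigma_i : \sigma^n x = x\} > 0$; strong positive recurrence is only invoked in the paper (comment \textit{\textsf{iv.}}) for the stronger equidistribution of periodic points \emph{inside} the Markov shift, which you do not need since the equidistribution in the Corollary comes from the $\Per^\delta$-expansiveness argument in the Main Theorem, not from the symbolic model. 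Positive recurrence of a component carrying the maximal entropy follows from the existence of a measure of maximal entropy, which holds by hypothesis, so there is no extra condition to verify.

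You also misidentify the purpose of the modification to Buzzi's construction. The paper does not alter the Hofbauer--Buzzi diagram to produce a better recurrence type. It replaces the monotone-branch partition by a finer bounded-distortion partition $P_\alpha$ (inequality~(\ref{ddd})), precisely so that periodic orbits coded by closed paths in the modified diagram are guaranteed to have Lyapunov exponents bounded below by $(1-\alpha)\htop(f)-\alpha > \delta$. This step, together with the analogous cone argument for Sarig's charts in the surface case (the lemma $\pi_\chi(\Per(\Sigma(\mathcal{G}_\chi),\sigma)) \subset \Per_{\alpha\chi}(M,f)$), is what turns the lower bound on $\sharp\Per_n$ from Gurevic's theory into a lower bound on $\sharp\Per_n^\delta$. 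Your proposal treats the $\delta$-hyperbolicity of coded periodic points as an automatic feature of the coding and locates the technical difficulty in recurrence instead; the actual difficulty is the other way around.
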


\textsf{\textit{viii. Open question.}}
Does the Main Theorem also hold true for $C^r$ surface diffeomorphims with  finite $r>1$ provided $\htop(f)> \Lambda^+(f)/r$ where $\Lambda^+(f)$ denotes the supremum over all invariant measures of the sum of the positive Lyapunov exponents? \\

The proof of the Main Theorem follows from the following general strategy: any subset of periodic points whose growth is larger than  or equal to the topological entropy, but with zero local exponential growth, is equidistributed with respect to  maximal measures. Moreover its exponential growth rate is in fact equal to the topological entropy.  By local exponential growth we mean the exponential growth in $n$ of the $n$-periodic points lying in a arbitrarily small $n$-dynamical ball. From Katok's Theorem we then only have to check that
$(\Per_n^\delta)_n$ has zero local exponential growth rate for a $C^\infty$-surface diffeomorphism (or a $C^\infty$ interval map). In \cite{em} \cite{bdo} such systems are said to be asymptotically periodic-expansive.
This condition was used there  to build so-called uniform generators. In our setting we get:

\begin{cor}\label{sm}
Let $f:M\rightarrow M$ be a $C^\infty$ surface diffeomorphism (or interval map).

Then for any $\delta>0$   there exists a uniform $\delta$-generator $P$, that is a finite measurable partition $P$ of  the complement of $\bigcup_{n\in \mathbb{N}\setminus \{0\}}\Per_n \setminus \Per_n ^\delta$ such that the maximal diameter of the elements of 
$ \bigvee_{k=-n}^n f^{k}P$ goes to zero when $n$ goes to infinity. 

Moreover we may assume that with  $\pper^\delta(f):=\sup_{n\in \mathbb{N}\setminus \{0\}}\frac{\log \sharp \Per_n^\delta(f)}{n}$ the partition  $P$ satisfies  $$\sharp P \leq e^{\max\left(\htop(f),\pper^\delta(f)\right)}+1.$$
\end{cor}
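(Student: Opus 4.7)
The proof should reduce to two ingredients. The first is the paper's central technical result: for any $\delta>0$, the family $(\Per_n^\delta)_n$ of hyperbolic $n$-periodic points with Lyapunov exponents $\delta$-away from zero has zero local exponential growth, in the sense that $\lim_{\varepsilon\to 0}\limsup_{n\to\infty}\tfrac{1}{n}\log\sup_{x\in M}\sharp(\Per_n^\delta\cap B_n(x,\varepsilon))=0$. This is exactly the content of asymptotic periodic expansiveness, with bad set $\bigcup_{n\geq 1}\Per_n\setminus\Per_n^\delta$, and the paper establishes it for $C^\infty$ surface diffeomorphisms and interval maps via Yomdin's semi-algebraic reparametrization.

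The second ingredient is the construction of a uniform generator from asymptotic periodic expansiveness. Given the expansiveness property, I would follow the procedure of \cite{em,bdo}: choose a small scale $\varepsilon>0$ and a large depth $n_0$, cover $M$ by a minimal $(n_0,\varepsilon)$-spanning set of Bowen balls, intersect with the complement of the bad periodic set, disjointify into a measurable partition, and merge atoms with identical $n_0$-symbolic itineraries. Asymptotic periodic expansiveness then guarantees that the maximal diameter of $\bigvee_{k=-n}^{n} f^k P$ tends to $0$ on the good part of $M$, since otherwise one would produce an exponentially growing family of $\Per_n^\delta$-periodic points trapped in a common Bowen ball, contradicting zero local growth.

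For the cardinality bound $\sharp P\leq e^{\max(\htop(f),\pper^\delta(f))}+1$, one has to tune the depth $n_0$ so that the $(n_0,\varepsilon)$-spanning number is simultaneously $\lesssim e^{n_0\htop(f)}$ (by the definition of topological entropy, for $\varepsilon$ small) and at least $\sharp\Per_{n_0}^\delta\leq e^{n_0\pper^\delta(f)}$ (forced by the need to separate the surviving periodic points). Taking an $n_0$-th root after merging along symbolic itineraries produces the exponent $\max(\htop(f),\pper^\delta(f))$, and the extra atom grouping the excluded bad periodic orbits accounts for the additive $+1$.

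The hardest step, as I see it, is the last quantitative part. The existence of a uniform generator is a packaged output of \cite{em,bdo}, but the quantitative bound requires revisiting their construction with care: one has to balance the Katok-style entropy covering against the count of periodic itineraries uniformly in the refinement scale, and perform a diagonal argument over $n_0$ that preserves $\sharp P$. Everything else—the measurable disjointification, the handling of the excluded set, and the diameter control—is standard once asymptotic periodic expansiveness is in hand.
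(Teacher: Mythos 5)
Your reduction to asymptotic $\Per^\delta$-expansiveness is the right first step, and that is indeed what Theorems \ref{mainn}--\ref{key} deliver via the Yomdin reparametrizations. But the second ingredient as you describe it has a genuine gap. You claim that once the partition is built from a fine $(n_0,\varepsilon)$-spanning family, ``asymptotic periodic expansiveness guarantees that the maximal diameter of $\bigvee_{k=-n}^{n}f^kP$ tends to $0$, since otherwise one would produce an exponentially growing family of $\Per_n^\delta$-periodic points trapped in a common Bowen ball.'' That implication does not hold: failure of the diameter to shrink produces distinct points with identical bi-infinite $P$-itineraries, not periodic points, and certainly not an exponentially growing collection of $\delta$-hyperbolic ones. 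Controlling the diameter of refined partitions at all scales is a tail-entropy phenomenon, not a local-periodic-growth phenomenon; in the $C^\infty$ case it is Buzzi's bound $h^*(f)\le dR(f)/r$ (so $h^*=0$ when $r=\infty$) that makes this work, and your sketch never invokes it. Likewise, your ``diagonal argument over $n_0$'' for the cardinality bound $\sharp P\le e^{\max(\htop,\pper^\delta)}+1$ is exactly where the hard analysis lives, and the merge-by-itinerary step you propose does not preserve the required bound in any obvious way.

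The paper's actual route is different and avoids these issues entirely: it does not construct $P$ by hand from Bowen balls. Instead, Corollary \ref{ccc} (built on Theorem 5.5/6.5 of \cite{bdo} and the superenvelope formalism) produces a symbolic extension $\pi:(Y,S)\to(M,f)$ with a Borel embedding $\psi:B\to Y$ defined off the bad periodic set and with alphabet size at most $e^{\max(\sup_\mu E(\mu),\pper^\delta)}+1$. Taking $r=\infty$ collapses the superenvelope to $E(\mu)=h(\mu)$, so $\sup_\mu E(\mu)=\htop(f)$ and the bound becomes $e^{\max(\htop,\pper^\delta)}+1$. The uniform $\delta$-generator is then $P=\psi^{-1}Q$ where $Q$ is the zero-coordinate partition of $Y$: the shrinking-diameter property holds automatically because $\bigvee_{|k|\le n}\sigma^kQ$ separates points in any subshift, and $\psi$ intertwines the dynamics (this is Theorem 1.2 in \cite{bdo}). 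In short, the uniform generator is a pullback along a Borel embedding into a subshift, not the output of a covering/disjointification construction, and the entropy-structure machinery (superenvelopes plus zero tail entropy for $C^\infty$) is doing the work you are trying to replace with the local periodic growth alone.
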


When  the surface diffeomorphism is nonuniformly hyperbolic with Lyapunov exponents uniformly from zero (as discussed in \textsf{\textit{ii.}}), then for some positive $\delta$ the uniform $\delta$-generator is covering  the whole surface.

\section{Local periodic growth, the abstract framework}

Let $(X,T)$ be a topological dynamical system, i.e. $X$ is a compact metrizable space and $T$ is a continuous map from $X$ into itself. We denote by $\Per(T)$ (or $\Per$) the set of periodic points of $(X,T)$. When $\mathcal{P}$ is an invariant subset of $\Per(T)$ and $n$ is a positive integer we let $\mathcal{P}_n$ be the subset  of $n$-periodic points in $\mathcal{P}$, that is $$\mathcal{P}_n:=\{ x\in \mathcal{P}, \ T^nx=x\}.$$

For any invariant subset $\mathcal{P}\subset \Per$ we let $$g_\mathcal{P}=\limsup_n\frac{1}{n}\log\sharp\mathcal{P}_n.$$

 We also introduce the following  local quantity. For any $\epsilon>0$ we denote by  $B_T(x,n,\epsilon)$  (or just $B(x,n,\epsilon)$) the $n$-dynamical ball at $x$ of size $\epsilon$ :

$$B_T(x,n,\epsilon)=\bigcap_{0\leq k<n}T^{-k}B(T^kx,\epsilon).$$

 Then we let

$$g_\mathcal{P}^*(\epsilon):=\limsup_n\frac{1}{n}\sup_{x\in X}\log\sharp\left( \mathcal{P}_n\cap B(x,n,\epsilon)\right)$$

and

$$g_\mathcal{P}^*=\lim_{\epsilon \rightarrow 0}g_\mathcal{P}^*(\epsilon).$$

The local periodic growth $g_\mathcal{P}^*$ is defined in a similar way to the tail entropy $h^*$ introduced by M. Misiurewicz in \cite{Miss} and somehow  represents for the periodic growth what the tail entropy does for the topological entropy. In general $h^*$ and $g^*_{\Per}$ may differ for the same reasons as for the topological entropy and the (global) periodic growth. For systems with the standard specification property\footnote{For a continuous interval map this property is satisfied if and only if the map is topologically mixing \cite{Bl}.} we always have  $g_{\Per}\geq \htop$  and $g_{\Per}^*\geq h^*$ \cite{den}. \\

We may also define  $g_\mathcal{P}^*$ by using the infinite dynamical ball $$B(x, \infty,\epsilon):=\bigcap_{\alpha< k<+\infty}T^{-k}B(T^kx,\epsilon),$$
with $\alpha=-\infty$ it $T$ is invertible and $\alpha=-1$ if not. Observe that when $p$  belongs to $\mathcal{P}_n\cap B(x,n,\epsilon)$ then we have
$$\mathcal{P}_n\cap B(x,n,\epsilon)\subset B(p,  \infty,2\epsilon).$$
Consequently we have:

\begin{lemma}
$$g_\mathcal{P}^*=\lim_{\epsilon\rightarrow 0} \limsup_n\frac{1}{n}\sup_{x\in X}\log \sharp \left( \mathcal{P}_n\cap B(x, \infty,\epsilon)\right).$$
\end{lemma}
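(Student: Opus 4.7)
The plan is to prove the lemma by establishing two inequalities between $g_\mathcal{P}^*$ and the alternative quantity $\tilde{g}_\mathcal{P}^* := \lim_{\epsilon \to 0} \limsup_n \tfrac{1}{n}\sup_{x\in X}\log\sharp(\mathcal{P}_n\cap B(x,\infty,\epsilon))$. Both directions are essentially immediate once one exploits the inclusion already noted just before the statement, namely that whenever $p\in\mathcal{P}_n\cap B(x,n,\epsilon)$ one has $\mathcal{P}_n\cap B(x,n,\epsilon)\subset B(p,\infty,2\epsilon)$. This inclusion is the only real content, and it holds because any $q\in\mathcal{P}_n\cap B(x,n,\epsilon)$ satisfies $d(T^k p,T^k q)<2\epsilon$ for $0\le k<n$, and $n$-periodicity of $p$ and $q$ immediately extends this estimate to all $k$ in the relevant range (all $k\in\mathbb{Z}$ in the invertible case, all $k\ge -1$ otherwise).

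First I would prove $\tilde{g}_\mathcal{P}^*\le g_\mathcal{P}^*$. This direction is trivial: $B(x,\infty,\epsilon)\subset B(x,n,\epsilon)$ for every $n$, so
\[
\sharp\bigl(\mathcal{P}_n\cap B(x,\infty,\epsilon)\bigr)\le \sharp\bigl(\mathcal{P}_n\cap B(x,n,\epsilon)\bigr),
\]
and taking the supremum over $x$, the $\limsup$ in $n$, and then $\epsilon\to 0$ yields the desired inequality.

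For the reverse inequality $g_\mathcal{P}^*\le \tilde{g}_\mathcal{P}^*$, I would fix $\epsilon>0$ and $n$ and $x\in X$. If $\mathcal{P}_n\cap B(x,n,\epsilon)$ is empty there is nothing to do; otherwise pick any $p$ in this set and apply the inclusion recalled above to obtain
\[
\sharp\bigl(\mathcal{P}_n\cap B(x,n,\epsilon)\bigr)\le \sharp\bigl(\mathcal{P}_n\cap B(p,\infty,2\epsilon)\bigr)\le \sup_{y\in X}\sharp\bigl(\mathcal{P}_n\cap B(y,\infty,2\epsilon)\bigr).
\]
Taking the supremum over $x$, the $\limsup$ in $n$ and then letting $\epsilon\to 0$, one gets $g_\mathcal{P}^*\le \tilde{g}_\mathcal{P}^*$, which combined with the previous step completes the proof.

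There is no real obstacle here: the work was done when the inclusion $\mathcal{P}_n\cap B(x,n,\epsilon)\subset B(p,\infty,2\epsilon)$ was established, and the lemma is a book-keeping consequence. The only mild point to check carefully is that both in the invertible and the non-invertible setting the inclusion does use $n$-periodicity to propagate the estimate $d(T^k p,T^k q)<2\epsilon$ beyond the window $0\le k<n$, so the definition of $B(\cdot,\infty,\cdot)$ with $\alpha=-\infty$ or $\alpha=-1$ is respected in both cases.
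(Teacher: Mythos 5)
Your proof is correct and follows exactly the paper's intended route: the paper presents the lemma as an immediate consequence of the observed inclusion $\mathcal{P}_n\cap B(x,n,\epsilon)\subset B(p,\infty,2\epsilon)$ for $p\in\mathcal{P}_n\cap B(x,n,\epsilon)$, together with the trivial reverse inclusion $B(x,\infty,\epsilon)\subset B(x,n,\epsilon)$, and your write-up simply makes that explicit. (One tiny wording slip: in the non-invertible case the relevant range is $k>\alpha=-1$, i.e. $k\ge 0$, not $k\ge -1$; this does not affect the argument.)
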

Alternatively we may also take the supremum over $x\in \mathcal{P}_n$ on the right hand side.\\

Recall a topological system $(X,T)$ is  \textit{expansive} when there is $\epsilon>0$ so that for all $x\in X$ the infinite dynamical ball  $B(x, \infty,\epsilon)$ is reduced to the singleton $\{x\}$. The system will be said to be \textit{asymptotically $\mathcal{P}$-expansive} when  $g_\mathcal{P}^*=0$. Aperiodic systems as well as expansive ones are obviously asymptotically $\mathcal{P}$-expansive for any $\mathcal{P}\subset \Per$. \\

Clearly if $(X,T)$ is asymptotically $\mathcal{P}$-expansive with finite topological entropy then the growth in $n$ of $n$-periodic points of $\mathcal{P}$ is at most exponential and we have more precisely:

\begin{lemma}
$$g_\mathcal{P}\leq \htop(T)+g_\mathcal{P}^*.$$
\end{lemma}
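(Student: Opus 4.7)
The plan is to reduce the global count $\sharp\mathcal{P}_n$ to a covering-times-local-count estimate, at a fixed scale $\epsilon$, and then let $\epsilon \to 0$. The argument is essentially the same as the standard proof that tail entropy controls the defect between topological entropy and entropy at scale $\epsilon$.

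First I would fix $\epsilon>0$ and let $N(n,\epsilon)$ denote the minimal cardinality of a covering of $X$ by $(n,\epsilon)$-dynamical balls. Since any such covering covers in particular the periodic set $\mathcal{P}_n$, by pigeonhole there exists some ball $B(x,n,\epsilon)$ containing at least $\sharp\mathcal{P}_n/N(n,\epsilon)$ elements of $\mathcal{P}_n$. This yields the elementary inequality
\begin{equation*}
\sharp\mathcal{P}_n \;\leq\; N(n,\epsilon)\cdot \sup_{x\in X}\sharp\bigl(\mathcal{P}_n\cap B(x,n,\epsilon)\bigr).
\end{equation*}

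Next I would take logarithms, divide by $n$, and pass to the limsup. Using the subadditivity of $\limsup$ for positive quantities,
\begin{equation*}
g_\mathcal{P} \;\leq\; \limsup_n \tfrac{1}{n}\log N(n,\epsilon) \;+\; \limsup_n \tfrac{1}{n}\sup_{x\in X}\log\sharp\bigl(\mathcal{P}_n\cap B(x,n,\epsilon)\bigr) \;\leq\; \htop(T,\epsilon)+g_\mathcal{P}^*(\epsilon),
\end{equation*}
where $\htop(T,\epsilon)$ denotes the usual entropy at scale $\epsilon$ coming from open covers by $(n,\epsilon)$-balls, which satisfies $\htop(T,\epsilon)\leq \htop(T)$.

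Finally, since the right-hand side is monotone in $\epsilon$, letting $\epsilon\to 0$ gives $g_\mathcal{P}\leq \htop(T)+g_\mathcal{P}^*$, using the definitions of $\htop(T)=\lim_{\epsilon\to 0}\htop(T,\epsilon)$ and $g_\mathcal{P}^*=\lim_{\epsilon\to 0}g_\mathcal{P}^*(\epsilon)$. There is no serious obstacle here: the only mildly delicate point is to make sure the scale $\epsilon$ used to define $N(n,\epsilon)$ matches the one in the definition of $g_\mathcal{P}^*(\epsilon)$, which it does since both refer to the same $(n,\epsilon)$-Bowen balls. The finiteness of $\htop(T)$, needed to conclude that $g_\mathcal{P}$ is finite when $g_\mathcal{P}^*=0$, is part of the statement's hypothesis.
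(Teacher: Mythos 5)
Your proof is correct and is exactly the elementary pigeonhole argument the paper has in mind (the paper states this lemma without proof, as a consequence labelled ``Clearly''). Covering $X$ by $N(n,\epsilon)$ Bowen balls, bounding $\sharp\mathcal{P}_n$ by $N(n,\epsilon)\cdot\sup_x\sharp(\mathcal{P}_n\cap B(x,n,\epsilon))$, passing to $\limsup$, and then letting $\epsilon\to 0$ is the intended route.
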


We give now a lower bound for the entropy of any equidistributed  measure  for $\mathcal{P}$. For any $n$ we let $\nu_n^\mathcal{P}$ be the atomic  measure given by $\nu_n^\mathcal{P}:=\frac{1}{\sharp \mathcal{P}_n}\sum_{x\in \mathcal{P}_n}\delta_x$, where $\delta_x$ is the Dirac measure at $x$. Clearly $\nu_n^\mathcal{P}$ is an invariant probability measure.

\begin{lemma}\label{cru} Let $(X,T)$ and $\mathcal{P}$ be as above. Assume $\sharp \mathcal{P}_n<\infty$ for all $n$. Then
for any weak-star limit  $\mu$ of $(\nu_{n_k}^\mathcal{P})_k$, where $(n_k)_k$ is an increasing  sequence of integers with
$ g_\mathcal{P}=\lim_k\frac{1}{n_k} \log \sharp \mathcal{P}_{n_k}$, we have

\begin{eqnarray*}
h(\mu)\geq g_\mathcal{P}- g_\mathcal{P}^*.
\end{eqnarray*}

\end{lemma}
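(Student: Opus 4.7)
The plan is to adapt the classical Misiurewicz--Katok lower bound for entropy, replacing the spanning-set counting argument by periodic-point counting inside dynamical balls. Fix $\epsilon>0$ and choose a finite measurable partition $Q$ of $X$ whose atoms all have diameter less than $\epsilon$ and satisfy $\mu(\partial Q)=0$ (a standard construction, e.g.\ cover $X$ by small balls and take a partition finer than this cover whose boundaries are $\mu$-null). The $\mu$-null boundary property guarantees that $\nu_{n_k}^\mathcal{P}\to\mu$ weak-$\ast$ implies $H_{\nu_{n_k}^\mathcal{P}}(R)\to H_\mu(R)$ for any finite refinement $R=\bigvee_{i=0}^{q-1}T^{-i}Q$.

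First I would bound the $\nu_n^\mathcal{P}$-entropy of the $n$-refined partition $Q_n:=\bigvee_{i=0}^{n-1}T^{-i}Q$ from below. Since $\mathrm{diam}(Q)<\epsilon$, every atom $A$ of $Q_n$ is contained in some $B(x,n,\epsilon)$, so $\sharp(A\cap\mathcal{P}_n)\leq M_n(\epsilon):=\sup_{x\in X}\sharp(\mathcal{P}_n\cap B(x,n,\epsilon))$. The measure $\nu_n^\mathcal{P}$ being uniform on the $\sharp\mathcal{P}_n$ points, the bound $\nu_n^\mathcal{P}(A)\leq M_n(\epsilon)/\sharp\mathcal{P}_n$ gives
\[
H_{\nu_n^\mathcal{P}}(Q_n)\;=\;\log\sharp\mathcal{P}_n\;-\;\sum_A\nu_n^\mathcal{P}(A)\log\sharp(A\cap\mathcal{P}_n)\;\geq\;\log\sharp\mathcal{P}_n-\log M_n(\epsilon).
\]
Dividing by $n=n_k$ and passing to the limit along the chosen subsequence yields
\[
\liminf_{k\to\infty}\frac{1}{n_k}H_{\nu_{n_k}^\mathcal{P}}(Q_{n_k})\;\geq\;g_\mathcal{P}-g_\mathcal{P}^*(\epsilon).
\]

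Next I invoke the Misiurewicz trick to convert the $n_k$-refinement on the left into a fixed $q$-refinement. Writing $n=aq+b$ with $0\leq b<q$ and using $T$-invariance of $\nu_n^\mathcal{P}$ together with subadditivity of $H$, we get
\[
\frac{1}{n}H_{\nu_n^\mathcal{P}}(Q_n)\;\leq\;\frac{1}{q}H_{\nu_n^\mathcal{P}}(Q_q)\;+\;\frac{b}{n}\log\sharp Q.
\]
Applying this with $n=n_k\to\infty$ and using $\nu_{n_k}^\mathcal{P}\to\mu$ together with $\mu(\partial Q_q)=0$, the right-hand side converges to $\tfrac{1}{q}H_\mu(Q_q)$. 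Combining with the previous bound gives $g_\mathcal{P}-g_\mathcal{P}^*(\epsilon)\leq \tfrac{1}{q}H_\mu(Q_q)$ for every $q$, hence $g_\mathcal{P}-g_\mathcal{P}^*(\epsilon)\leq h_\mu(T,Q)\leq h(\mu)$. Finally let $\epsilon\to 0$, using $g_\mathcal{P}^*(\epsilon)\to g_\mathcal{P}^*$, to conclude $h(\mu)\geq g_\mathcal{P}-g_\mathcal{P}^*$.

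The main obstacle is the very first step: producing finite partitions of arbitrarily small diameter with $\mu$-null boundaries so that the weak-$\ast$ convergence $\nu_{n_k}^\mathcal{P}\to\mu$ actually transfers to convergence of partition entropies. This is standard (a Baire/transversality argument on concentric small balls), but it is where the hypothesis of a weak-$\ast$ limit is genuinely used; everything else is uniform-measure bookkeeping and the standard subadditivity manipulation.
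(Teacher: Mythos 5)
Your proposal is correct and follows essentially the same route as the paper. You bound $H_{\nu_n^{\mathcal P}}(Q_n)$ from below using the atomic structure of $\nu_n^{\mathcal P}$ and the fact that atoms of $Q_n$ lie in $(n,\epsilon)$-dynamical balls, then pass to a fixed scale $q$ and let $\nu_{n_k}^{\mathcal P}\to\mu$ using the $\mu$-null-boundary partition; the paper does exactly this, presenting the lower bound for $H_{\nu_n}(P^n)$ via an auxiliary refining partition separating the points of $\mathcal P_n$ together with the conditional-entropy inequality $H(Q)\le H(P)+H(Q\mid P)$, which is arithmetically the same identity you write out directly. The only cosmetic difference is in the conversion from scale $n_k$ to scale $q$: the paper quotes the monotonicity of $n\mapsto\frac1n H_\nu(P^n)$ for invariant $\nu$, whereas you re-derive it via the Misiurewicz block-decomposition with the harmless $\frac{b}{n}\log\sharp Q$ error term; both are standard and equivalent in effect.
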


Before the proof we recall some basic definitions and facts about entropy in ergodic theory \cite{Dowb}. For a Borel probability measure $\mu$ on $X$ \footnote{Of course  all these notions may be defined in a more general context, but we will always consider topological dynamical systems on compact metrizable spaces in the present paper.} and a finite Borel partition $P$ of $X$  we let  $H_\mu(P)$  be the Shannon entropy of $\mu$ with respect to $P$:
$$H_\mu(P):=-\sum_{A\in P}\mu(A)\log \mu(A).$$
By concavity of $x\mapsto -x\log x$ on $[0,1]$ we always have \begin{eqnarray}\label{concc}H_\mu(P)&\leq &\log \sharp P.\end{eqnarray} Observe also that the map $\nu\mapsto H_\nu(P)$ defined on the (compact) set of Borel probability measures endowed  with the weak-star topology  is continuous at $\mu$ whenever $\mu(\partial A)=0$ for every $A\in P$.
When $Q$ is another finite Borel partition we may define the conditional quantity $H_\mu(P|Q)$ as follows:
$$H_\mu(P|Q):=-\sum_{B\in Q}\mu(B)H_{\mu^B}(P),$$
where $\mu^B$ is the conditional  measure on $B$ defined by  $\mu^B(C)=\frac{\mu(B\cap C)}{\mu(B)}$ for any Borel set $C$. The following inequality is well-known:
\begin{eqnarray}\label{cond}H_\mu(P)&\leq & H_\mu(Q)+H_\mu(P|Q).
\end{eqnarray}

If  $\mu$ is moreover $T$-invariant the sequence $\left(\frac{H_\mu(P^n)}{n}\right)_n$, with $P^n=\bigvee_{k=0}^{n-1}T^{-k}P$, is nonincreasing
 and its limit is usually denoted by $h_\mu(P)$. Then the measure theoretic (or Kolmogorov-Sinai) entropy
$h(\mu)$ of $\mu$ is defined as the supremum of $h_\mu(P)$ over all finite Borel partitions $P$. When $(P_k)_k$ is a sequence of finite partitions whose diameter is going to zero then one can show (see  \cite{Dowb}) that:
$$h(\mu)=\lim _k h_\mu(P_k).$$

Let us go back now to the proof of Lemma \ref{cru}.

\begin{proof}
Let $\mu$ be the limit of a subsequence  $(\nu_{n_k}^\mathcal{P})_k$ and let $\epsilon>0$. Let $P$ be a partition of diameter less than $\epsilon$ with $\mu(\partial A)=0$ for all $A\in P$.
For any large $n$ we let $Q_n$ be a finite Borel partition of $X$ finer than $P^n$ such that any element of $Q_n$ contains at most one point of $ \mathcal{P}_n$.

For any invariant measure $\nu$, the sequence $\left(\frac{H_\nu(P^n)}{n}\right)_n$ is nonincreasing. Therefore  we have for any $0<l\leq n_k$ (in what follows $\nu_{n_k}:=\nu_{n_k}^\mathcal{P}$):

\begin{eqnarray*}
\frac{1}{l}H_{\nu_{n_k}}(P^l) &\geq &\frac{1}{n_k}H_{\nu_{n_k}}(P^{n_k}).
\end{eqnarray*}
From (\ref{cond}) and (\ref{concc}) we get then:
\begin{eqnarray*}
\frac{1}{l}H_{\nu_{n_k}}(P^l)& \geq &   \frac{1}{n_k}\left(H_{\nu_{n_k}}(Q_{n_k})- H_{\nu_{n_k}}(Q_{n_k}|P^{n_k})\right),\\
& \geq & \frac{1}{n_k}\left(H_{\nu_{n_k}}(Q_{n_k})- \sum_{A\in P^{n_k}} \nu_{n_k}(A)H_{\nu^A_{n_k}}(Q_{n_k})\right),\\
\frac{1}{l}H_{\nu_{n_k}}(P^l)  & \geq &   \frac{1}{n_k}\left(\log \sharp \mathcal{P}_{n_k}-\sup_{A\in P^{n_k}} \log\sharp \left(\mathcal{P}_{n_k}\cap A\right)\right).
\end{eqnarray*}
As the diameter of any $A\in P$  is less than $\epsilon$ we have
$$\frac{1}{l}H_{\nu_{n_k}}(P^l)   \geq    \frac{1}{n_k}\left(\log \sharp \mathcal{P}_{n_k}-\sup_{x\in X} \log\sharp \left(\mathcal{P}_{n_k}\cap B(x,n_k,\epsilon)\right)\right).$$
By continuity the left term is going to  $\frac{1}{l}H_{\mu}(P^l)$ when $k$ goes to infinity, whereas for the right term we have
\begin{eqnarray*}\liminf_k\frac{1}{n_k}\left(\log \sharp \mathcal{P}_{n_k}-\sup_{x\in X} \log\sharp \left(\mathcal{P}_{n_k}\cap B(x,n_k,\epsilon)\right)\right)&\geq & \lim_k\frac{1}{n_k}\log \sharp \mathcal{P}_{n_k}- \\
&  &\limsup_k\frac{1}{n_k}\sup_{x\in X} \log\sharp \left(\mathcal{P}_{n_k}\cap B(x,n_k,\epsilon)\right),\\
&\geq &  g_\mathcal{P}-g^*_{\mathcal{P}}(\epsilon).
\end{eqnarray*}
Thus we get for all positive integer $l$:
$$\frac{1}{l}H_{\mu}(P^l)\geq g_\mathcal{P}-g^*_{\mathcal{P}}(\epsilon),$$
and by taking the limit in $l$:
$$h_\mu(P)\geq g_\mathcal{P}-g^*_{\mathcal{P}}(\epsilon).$$
The above inequality holds for any finite partition $P$ whose elements have diameter less than $\epsilon$ and boundaries with zero $\mu$-measure. Since these partitions generate the whole Borel sigma-algebra up to sets of  zero $\mu$-measure we get therefore:
$$h(\mu)\geq g_\mathcal{P}-g^*_{\mathcal{P}}(\epsilon).$$
Finally we conclude  the proof by taking the limit when $\epsilon$ goes to zero.
\end{proof}

By the variational principle \cite{go,goo} the topological entropy is equal to the supremum of the measure theoretic entropy of all invariant measures. A measure whose entropy attains this supremum (and therefore  is equal to the topological entropy) is said to be maximal or of maximal entropy. Such measures do not always exist (even in intermediate smoothness, i.e. for  $C^r$ smooth systems with $r<+\infty$ \cite{Mis3}).  The following corollary gives a new criterion for the existence of maximal measures and the equidistribution of periodic points with respect to these measures beyond Bowen's result:

\begin{cor}\label{conc}
Assume $T$ is asymptotically $\mathcal{P}$-expansive and $g_\mathcal{P}\geq \htop(T)$. Then we have
\begin{itemize}
\item $g_\mathcal{P}=\htop(T)$,
\item any weak-star limit of $(\nu_{n_k}^\mathcal{P})_k$ with
$ g_\mathcal{P}=\lim_k\frac{1}{n_k} \sharp \mathcal{P}_{n_k}$ is a measure of maximal entropy.
\end{itemize}
\end{cor}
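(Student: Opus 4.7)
The plan is to reduce the corollary directly to the two preceding lemmas in Section 2, since together they essentially give both conclusions for free once one notes that $g_\mathcal{P}^*=0$.

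For the first bullet, I would invoke the lemma stating $g_\mathcal{P}\leq \htop(T)+g_\mathcal{P}^*$. By the hypothesis of asymptotic $\mathcal{P}$-expansiveness we have $g_\mathcal{P}^*=0$, hence $g_\mathcal{P}\leq \htop(T)$. Combined with the standing assumption $g_\mathcal{P}\geq \htop(T)$ this forces $g_\mathcal{P}=\htop(T)$. In particular $g_\mathcal{P}$ is finite (so long as $\htop(T)<\infty$), which justifies working with the counting measures $\nu_n^\mathcal{P}$ for $n$ along the chosen sequence, since $\sharp\mathcal{P}_n$ grows at most exponentially and is therefore finite for large $n$.

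For the second bullet, I would fix an increasing sequence $(n_k)_k$ of positive integers with $g_\mathcal{P}=\lim_k\frac{1}{n_k}\log\sharp\mathcal{P}_{n_k}$, pass (by compactness of the set of Borel probability measures) to a subsequence along which $(\nu_{n_k}^\mathcal{P})_k$ converges weak-star to some $T$-invariant measure $\mu$, and apply Lemma \ref{cru}: this yields
$$h(\mu)\ \geq\ g_\mathcal{P}-g_\mathcal{P}^*\ =\ \htop(T)-0\ =\ \htop(T).$$
The variational principle provides the reverse inequality $h(\mu)\leq\htop(T)$, so $\mu$ is a measure of maximal entropy.

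There is essentially no obstacle here beyond bookkeeping: the two substantive steps (the upper bound on $g_\mathcal{P}$ and the lower bound on $h(\mu)$) have already been carried out in the preceding lemmas. The only points deserving care are (i) verifying the finiteness hypothesis of Lemma \ref{cru} on the $\sharp\mathcal{P}_n$, which follows from the exponential upper bound established in the first step, and (ii) handling the pathological case $\htop(T)=+\infty$, which is either excluded by implicit standing assumptions or trivially yields $g_\mathcal{P}=+\infty=\htop(T)$ and a vacuous second statement.
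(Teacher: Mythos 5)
Your proposal is correct and matches the route the paper intends: the corollary is stated immediately after the two lemmas precisely because it follows by combining them in exactly the way you describe (the upper bound $g_\mathcal{P}\leq \htop(T)+g_\mathcal{P}^*$ from the second lemma, and the entropy lower bound from Lemma~\ref{cru}, with $g_\mathcal{P}^*=0$ plugged into both). Your bookkeeping remarks are also sound, and in particular finiteness of $\sharp\mathcal{P}_n$ does follow: if $\mathcal{P}_{n_0}$ were infinite then so would be $\mathcal{P}_{kn_0}$ for every $k$, forcing $g_\mathcal{P}=+\infty$, which is excluded once $g_\mathcal{P}\leq\htop(T)<\infty$.
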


For a  topological system $(X,T)$ with the small boundary property a measure theoretic analogue of $g_\mathcal{P}^*$ was defined in \cite{bdo}. A system is said to have the small boundary property\footnote{It was proved in \cite{Lin} that on a finite dimensional manifold any dynamical system with countable periodic points have this property.} when it admits a basis of neighborhoods with null boundary, i.e. with null $\mu$-measure for any invariant measure $\mu$. Let $(\epsilon_k)_k$ be a decreasing sequence  of real numbers going to zero. Then for any invariant measure $\mu$ we let
\begin{eqnarray}\label{tto} g^*_\mathcal{P}(\mu)&=& \lim_k \limsup_{\nu_n\xrightarrow{n\rightarrow\infty} \mu}\mathfrak{P}_k(\nu_n)\end{eqnarray}
with $$\mathfrak{P}_k(\nu_n)= \frac{1}{n}\int\log \sharp \left(\mathcal{P}_n\cap B(x,n,\epsilon_k)\right )d\nu_n(x),$$ where $\nu_n$ are  periodic measures associated to a periodic point in  $\mathcal{P}_n$ with minimal period equal to $n$.
When there is no such sequence $(\nu_n)_n$ converging to $\mu$ we let  $g^*_\mathcal{P}(\mu)=0$.
 Then we have the following variational principle (see Appendix B):

\begin{eqnarray}\label{varia}g_\mathcal{P}^*=\sup_\mu g^*_\mathcal{P}(\mu).
\end{eqnarray}

\begin{rem}
In the statement of Lemma \ref{cru}  one can replace $g_\mathcal{P}^*$ by $g^*_\mathcal{P}(\mu)$. The easy proof is left as an exercise for the reader.
\end{rem}

\section{Local periodic growth for $C^r$, $r>1$,  interval maps and surface diffeomorphisms}

We investigate here the local periodic growth for $C^r$  smooth systems $f:M\rightarrow M$ for a real\footnote{The map $f$ is $C^r$ if it is $\lfloor r \rfloor$ times differentiable and its $\lfloor r \rfloor$-differential map is $r-\lfloor r \rfloor$-Holder.} number $r\geq 1$ on a compact manifold $M$.
It is known that for  $C^r$  generic diffeomorphims in dimension larger than one  the growth of hyperbolic  periodic points is superexponential  in Newhouse domains \cite{Kal} and thus these diffeomorphisms have infinite local exponential  growth $g_\mathcal{P}$ with respect to $\mathcal{P}=\Per$. Similar explicit examples have been also built on the interval \cite{kk}. When $f :M\rightarrow M$ is a surface diffeomorphism (resp. a $C^1$ interval map)  we let $\Per^\delta$ for $\delta>0$ be the set of saddle (resp. repelling) periodic points of $f$ with Lyapunov exponents  $\delta$-away from zero. For any $f$-invariant measure $\mu$ we let $\chi^+(\mu,f)$ be the integral with respect to $\mu$ of the positive part of the  largest Lyapunov exponent, i.e. $\chi^+(\mu,f):=\int \lim_n \frac{1}{n}\log^+ \|T_xf^n\|d\mu(x)$.
We also denote by $R(f)=\sup_\mu\chi^+(\mu,f)$ its  supremum over all invariant measures, which can be written as $R(f):=\lim_n\frac{1}{n}\log^+ \sup_{x\in X}\|T_xf^n\|$ (see \cite{b}).

\begin{theo}\label{mainn}
Let $f$ be a $C^r$  interval (or circle) map with $r>1$. Then for any $\delta>0$ and any invariant measure $\mu$
we have
$$g_{\Per^\delta} ^*(\mu)\leq \frac{\chi^+(\mu,f)}{r-1},$$
in particular $$g_{\Per^\delta} ^*\leq \frac{R(f)}{r-1}.$$
\end{theo}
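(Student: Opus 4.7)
The plan is to follow Yomdin's semi-algebraic reparametrization approach, adapted to count periodic points rather than $(n,\epsilon)$-separated orbits. Fix an invariant probability measure $\mu$, $\delta>0$, a small $\epsilon>0$, and a point $x\in M$ whose Birkhoff averages of $\log^+\|Tf\|$ approach $\chi^+(\mu,f)$. The goal is the pointwise bound
\[
\sharp(\Per_n^\delta\cap B(x,n,\epsilon))\le \exp\bigl(n\chi^+(\mu,f)/(r-1)+o(n)\bigr).
\]
Integrating this against the atomic periodic measures $\nu_n$ appearing in the variational formulation \eqref{tto} for $g^*_{\Per^\delta}(\mu)$, and then letting $\epsilon\to 0$, will give the measure-theoretic bound. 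The uniform bound $g^*_{\Per^\delta}\le R(f)/(r-1)$ will follow by replacing the Birkhoff bound on $\log^+\|T_{f^kx}f\|$ by the global bound $R(f)$ valid for every $x$.

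The first step is the dynamical Yomdin reparametrization lemma, which covers $B(x,n,\epsilon)$ by $C^r$-curves $\phi_i:[0,1]\to M$ ($i=1,\dots,N_n(x)$) satisfying $\|f^k\circ\phi_i\|_{C^r}\le 1$ for every $0\le k\le n$. Submultiplicativity of the reparametrization count under composition with $f$ (each additional iterate introducing a factor $\|Tf\|^{1/r}$) yields
\[
N_n(x)\le\exp\Bigl(\tfrac{1}{r}\sum_{k=0}^{n-1}\log^+\|T_{f^kx}f\|+o(n)\Bigr)\le\exp(n\chi^+(\mu,f)/r+o(n)).
\]

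The second step is to count periodic points on a fixed piece $\phi=\phi_i$. At a parameter $t_p$ with $p=\phi(t_p)\in\Per_n^\delta$, the chain rule together with $\|f^n\circ\phi\|_{C^1}\le 1$ forces
\[
|\phi'(t_p)|\le \frac{|(f^n\circ\phi)'(t_p)|}{|(f^n)'(p)|}\le e^{-n\delta},
\]
so the parameters giving periodic points lie in the sublevel set $E_i:=\{t\in[0,1]:|\phi'(t)|\le e^{-n\delta}\}$. Since $\phi'$ is of class $C^{r-1}$ with $\|\phi'\|_{C^{r-1}}\le\|\phi\|_{C^r}\le 1$, a second Yomdin-type reparametrization applied to $\phi'$ decomposes $E_i$ into intervals of monotonicity whose number is bounded by $\exp(n\chi^+(\mu,f)/(r(r-1))+o(n))$; on each such interval, the transversality $|H'(t_p)|=|\phi'(t_p)|\cdot|(f^n)'(p)-1|$ at a zero of $H:=f^n\circ\phi-\phi$, combined with the monotonicity of $\phi'$, leaves at most a bounded number of periodic points. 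Multiplying the two counts and using the identity $\tfrac{1}{r}+\tfrac{1}{r(r-1)}=\tfrac{1}{r-1}$ gives the claimed estimate.

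The main technical obstacle is the second Yomdin step. The first reparametrization is classical and already supplies the orbit-counting exponent $1/r$ underlying Buzzi's tail entropy bound $h^*\le R(f)/r$; however, transferring the count to the sublevel set $\{|\phi'|\le e^{-n\delta}\}$ requires a refined $C^{r-1}$ semi-algebraic argument controlling both the number of connected components of this set and the number of zeros of $H$ on each component. The effective loss of one order of smoothness when passing from $C^r$ to $C^{r-1}$ is precisely what degrades the exponent from the orbit-counting $1/r$ to the periodic-counting $1/(r-1)$, and this is the reason the theorem is weaker than (and does not follow from) the corresponding tail-entropy inequality.
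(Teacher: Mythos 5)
Your opening steps are sound and anticipate part of the paper's strategy: the observation that if $p=\phi(t_p)\in\Per_n^\delta$ lies on a Yomdin curve $\phi$ with $\|f^n\circ\phi\|_{C^1}\le 1$ then $|\phi'(t_p)|\le e^{-n\delta}$ is correct and is a genuine insight. But the second step, which is where the exponent $1/(r-1)$ has to be produced, has two gaps that I do not see how to close as written.

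First, the count. You claim that a Yomdin-type reparametrization of $\phi'$ decomposes $E_i=\{|\phi'|\le e^{-n\delta}\}$ into monotone pieces whose number is $\exp(n\chi^+(\mu,f)/(r(r-1))+o(n))$. But $\phi'$ is a single $C^{r-1}$ function with $\|\phi'\|_{C^{r-1}}\le 1$, so any Yomdin--Gromov estimate applied to $\phi'$ alone depends only on $r$, not on $n$ or on $\chi^+(\mu,f)$. There is no mechanism in your step by which the Birkhoff sum of $\log^+\|Tf\|$ re-enters the count of pieces of $E_i$; the factor $e^{n\chi^+/(r(r-1))}$ is asserted but never derived. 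To get an $n$-dependent exponent here you would need the reparametrization to control $(f^n)'$ along $\phi$, not just $\phi'$ — but the standard Yomdin cover gives $\|f^n\circ\phi\|_{C^r}\le 1$, which constrains the \emph{composition}, not the derivative of $f^n$ on $\phi([0,1])$. Where $|\phi'|$ is tiny, $|(f^n)'|$ can be astronomically large and wildly oscillating, and nothing in your first step bounds its variation there.

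Second, the uniqueness. Your transversality relation $H'(t_p)=((f^n)'(p)-1)\phi'(t_p)$ gives a \emph{lower} bound on $|H'(t_p)|$ only if you also have a lower bound on $|\phi'(t_p)|$; but your step 4 established exactly the opposite, that $|\phi'(t_p)|$ is small, and indeed $\phi'(t_p)$ can vanish (Yomdin curves need not be immersions). At such a point $H'(t_p)=0$, so the implicit uniqueness argument breaks down and a cluster of periodic points on one monotone piece cannot be excluded. The paper avoids both problems by proving a stronger reparametrization lemma (Lemma~\ref{main}, from~\cite{Burguet}): instead of a standard Yomdin cover of $B(x,n,\epsilon)$, it produces intervals $J\in\mathcal{J}_n(x)$ on which $(f^n)'$ has bounded distortion (item (i)), so that any $J$ meeting $\Per_n^\delta$ has $f^n$ uniformly expanding on all of $J$ and hence carries at most one fixed point. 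The bound on $\sharp\mathcal{J}_n(x)$ directly carries the exponent $\frac{1}{r-1}(\lambda_n^+(x,f^p)-p\delta)$; this $1/(r-1)$ arises because the refined lemma controls the derivative of $f^n$ along the reparametrization (morally, one applies Yomdin--Gromov to the tangent map rather than to $f$ alone, costing one degree of smoothness), rather than appearing as a sum $1/r+1/(r(r-1))$ from two independent reparametrizations. Your plan would become viable only if your ``second Yomdin step'' were replaced by something that genuinely controls $(f^n)'$ on each piece, which is what Lemma~\ref{main} does in a single shot.
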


For surface diffeomorphisms we will show the following upper bound:

\begin{theo}\label{key}
Let $f:M\rightarrow M$ be a $C^r$ surface diffeomorphism with $r>1$. Then for any $\delta>0$ and any invariant measure $\mu$
we have
$$g_{\Per^\delta} ^*(\mu)\leq \frac{2(3r-1)}{r(r-1)}\max\left(\chi^+(\mu,f),\chi^+(\mu,f^{-1})\right),$$
in particular $$g_{\Per^\delta} ^*\leq \frac{2(3r-1)}{r(r-1)} \max\left(R(f),R(f^{-1})\right).$$
\end{theo}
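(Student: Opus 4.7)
The plan is to adapt Yomdin's semi-algebraic approach to the surface setting: cover the dynamical ball around a fixed saddle $n$-periodic point by a controlled number of $C^r$ reparametrizations of the unit interval with bounded $C^r$-norm, and count the zeros of $f^n-\Id$ on each such piece via a Rolle/Khovanskii-type estimate.

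By the variational principle $(\ref{varia})$ together with the remark after Corollary \ref{conc}, it suffices to fix an $f$-invariant measure $\mu$ and bound $g^*_{\Per^\delta}(\mu)$. I would choose periodic measures $\nu_n\to\mu$ with each $\nu_n$ equidistributed on the $f$-orbit of a point $p_n\in\Per^\delta$ of minimal period $n$; the Lyapunov exponents of $\nu_n$ converge to those of $\mu$. By $f$-invariance of $\Per^\delta_n$ the integrand in $\mathfrak{P}_k(\nu_n)$ is constant on the orbit of $p_n$, and the observation preceding Lemma 2.1 lets me enlarge $B(p_n,n,\epsilon_k)$ to the two-sided infinite dynamical ball $B(p_n,\infty,2\epsilon_k)$. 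Thus the task becomes to show
$$ \limsup_{n\to\infty} \frac{1}{n} \log \sharp \bigl( \Per^\delta_n \cap B(p_n,\infty,2\epsilon_k) \bigr) \leq \frac{2(3r-1)}{r(r-1)} \max\bigl(\chi^+(\mu,f),\chi^+(\mu,f^{-1})\bigr)+o_k(1). $$

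For the core count I apply the semi-algebraic Yomdin reparametrization lemma in dimension two, iterating both forward and backward in a balanced way: after splitting $n$ steps between forward iterations by $f$ and backward iterations by $f^{-1}$, one represents $B(p_n,\infty,2\epsilon_k)$ as the union of $N_n\leq \exp\bigl(n(\tfrac{2}{r}\max(\chi^+(\nu_n,f),\chi^+(\nu_n,f^{-1}))+o(1))\bigr)$ images of the unit interval by $C^r$ curves of bounded norm — this is the surface analogue of Newhouse's tail entropy bound $h^*\leq 2R(f)/r$. On each such curve the $n$-periodic points are isolated zeros of $f^n-\Id$, and a Khovanskii-type estimate applied to a function whose $C^{r-1}$-seminorm grows like $e^{n\max(\chi^+(\nu_n,f),\chi^+(\nu_n,f^{-1}))}$ bounds this count by $\exp\bigl(n(\tfrac{4}{r-1}\max(\chi^+(\nu_n,f),\chi^+(\nu_n,f^{-1}))+o(1))\bigr)$ — a two-dimensional, two-sided analogue of the $\chi^+/(r-1)$ bound in Theorem \ref{mainn}. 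Combining the two exponents gives $\tfrac{2}{r}+\tfrac{4}{r-1}=\tfrac{2(3r-1)}{r(r-1)}$, exactly the announced coefficient; passing to the limit $\nu_n\to\mu$ finishes the proof.

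The main technical difficulty lies in the quantitative Yomdin reparametrization step. One must show both that $B(p_n,\infty,2\epsilon_k)$ admits a semi-algebraic cover of controlled complexity (which is natural in Pesin-like coordinates adapted to $p_n$, where the ball is essentially a thin rectangle) and that the iterated reparametrizations in dimension two contribute exactly the factor $\tfrac{2}{r}$ per unit time, even when one alternates between $f$ and $f^{-1}$. Balancing these two directions so that the exponent involves $\max(\chi^+(\mu,f),\chi^+(\mu,f^{-1}))$ rather than a sum, and making the estimates uniform in $n$ and $\epsilon_k$, will require the full strength of the semi-algebraic lemmas of \cite{Yom}. The Khovanskii-style zero count on each curve, while more classical, must likewise be carried out at the correct intrinsic scale so that the growth of $\|f^n-\Id\|_{C^{r-1}}$ along the curve is only fuelled by the Lyapunov exponents and not by global derivative bounds.
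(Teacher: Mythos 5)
Your final arithmetic matches the paper's coefficient (since $\tfrac{2}{r}+\tfrac{4}{r-1}=\tfrac{2(3r-1)}{r(r-1)}$), and the scaffolding --- the variational principle, periodic measures $\nu_n\to\mu$, passage from $B(x,n,\epsilon)$ to the infinite dynamical ball, and Yomdin--Gromov reparametrizations --- is indeed what the paper uses. But the heart of your argument, covering the dynamical ball by $C^r$ curves and then bounding the zeros of $f^n-\Id$ on each curve by a Khovanskii-type estimate, is not what the paper does, and as written it has a genuine gap.

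The paper covers the dynamical ball by two-dimensional $n$-hyperbolic hexagons (Lemma \ref{d}), each carrying transverse expanding and contracting cone fields that $Tf^n$ preserves with rates $\geq e^{n\delta}$ and $\leq e^{-n\delta}$; Lemma \ref{ff} then shows each such hexagon contains at most one $n$-periodic point --- a genuinely two-dimensional argument using both cones. Your one-dimensional substitute fails at several points. First, $(f^n-\Id)|_\gamma$ is $\mathbb{R}^2$-valued, so a scalar Rolle/Khovanskii count does not directly apply. Second, even for a scalar component, a bounded $C^{r-1}$ (or $C^r$) seminorm simply does not bound the number of zeros on $[0,1]$: the function $t\mapsto N^{-r}\sin(Nt)$ has $C^r$-norm at most $1$ and roughly $N/\pi$ zeros, so there is no estimate of the kind you assert. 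Third, and most tellingly, nowhere in your count does the hypothesis $x\in\Per_n^\delta$ enter, yet it must: Kaloshin's examples give $C^\infty$ surface diffeomorphisms with superexponential growth of $\sharp\Per_n$, so any bound of this kind has to use uniform hyperbolicity of the periodic points in an essential way, and the paper does so precisely through the cone structure on the hexagons and the reparametrization of the tangent-bundle action in Lemma \ref{tec}. Finally, the claim that ``balancing forward and backward iterations'' yields a $\max$ rather than a sum is unjustified; in the paper the $\max$ appears only a posteriori, from the estimate $g^*_{\Per^\delta}(\mu)\leq\tfrac{2}{r}\chi^+(\mu,f)+\tfrac{2}{r-1}\left(\chi(\mu,f)+\chi(\mu,f^{-1})\right)$ combined with $\chi(\mu,f)+\chi(\mu,f^{-1})\leq 2\max\left(\chi^+(\mu,f),\chi^+(\mu,f^{-1})\right)$, not from a balancing trick in the reparametrization.
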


The examples built in \cite{Bub} show that the upper bound in Theorem \ref{mainn} for $C^r$ interval maps with finite $r>1$ is sharp. But we do not think this is the case for surface diffeomorphisms in Theorem \ref{key}.
Let us now give some consequences of the above theorems. First recall we have by Katok's theorem  \cite{Kat} and its noninvertible version \cite{cc,y}:

\begin{theo}\label{Kat}
Let $f$ be a $C^r$ surface diffeomorphism (or interval map) with $r>1$ and let $\mu$ be an invariant measure with positive entropy. Then for any $0<\delta< h(\mu)$:

$$g_{\Per^\delta} \geq h(\mu),$$

in particular for any $\delta<\htop(f)$, $$g_{\Per^\delta}\geq \htop(f).$$
\end{theo}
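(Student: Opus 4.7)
The plan is to deduce this statement directly from the classical Katok horseshoe theorem \cite{Kat} (together with its noninvertible version \cite{cc,y}), since the sets $\Per_n^\delta$ are tailored to count periodic points of hyperbolic basic sets. I would split the proof in two steps: first establish the inequality $g_{\Per^\delta}\geq h(\mu)$ for a fixed invariant measure $\mu$ with $h(\mu)>\delta>0$, and then derive the in-particular statement from the variational principle.

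For the first step, I would start by reducing to the case where $\mu$ is ergodic and hyperbolic. Using the ergodic decomposition of entropy, $h(\mu)=\int h(\mu_x)\,d\mu(x)$, we may replace $\mu$ by an ergodic component with entropy still larger than $\delta$. On a surface, Ruelle's inequality applied to both $f$ and $f^{-1}$ forces such an ergodic measure to be hyperbolic, with both Lyapunov exponents at least $h(\mu)$ in absolute value; for interval maps the positive exponent is similarly at least $h(\mu)$ by Ruelle. Next I would invoke Katok's theorem: for every $\eta>0$ there exists a compact $f$-invariant uniformly hyperbolic horseshoe $K_\eta$ with $\htop(f|_{K_\eta})\geq h(\mu)-\eta$ and with Lyapunov exponents of periodic points arbitrarily close to those of $\mu$. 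Choosing $\eta$ small enough that $h(\mu)-\eta>\delta$ and that the periodic points in $K_\eta$ have Lyapunov exponents of absolute value larger than $\delta$, we obtain $\Per_n(f|_{K_\eta})\subset \Per_n^\delta(f)$ for every $n$. Since $K_\eta$ is conjugate to a subshift of finite type, Bowen's theorem \cite{bow,boww} yields $\limsup_n \frac{1}{n}\log \sharp \Per_n(f|_{K_\eta})=\htop(f|_{K_\eta})$. Combining the inclusion and this equality gives $g_{\Per^\delta}(f)\geq h(\mu)-\eta$, and letting $\eta\to 0$ finishes the first step.

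For the in-particular statement, given $\delta<\htop(f)$, the variational principle produces an invariant measure $\mu$ with $\delta<h(\mu)$ and $h(\mu)$ arbitrarily close to $\htop(f)$; applying the first step to each such $\mu$ and passing to the supremum yields $g_{\Per^\delta}\geq \htop(f)$. The only nontrivial ingredient is thus Katok's horseshoe theorem itself, and the main subtlety of the argument is to make sure the horseshoe furnished by that theorem carries uniformly hyperbolic periodic points with Lyapunov exponents above the prescribed threshold $\delta$; this is built into the standard statement of Katok's theorem through the Pesin block construction, so no extra work is required.
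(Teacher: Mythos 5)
Your approach is the same one the paper intends: the paper does not give a proof of this theorem at all, but simply "recalls" it as an immediate consequence of Katok's horseshoe theorem \cite{Kat} (and its noninvertible versions \cite{cc,y}), and your argument fills in the standard derivation (Ruelle's inequality to get hyperbolicity of ergodic components, Katok to get a hyperbolic horseshoe of almost-full entropy with Lyapunov exponents near those of the measure, then counting periodic orbits in the horseshoe).

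There is one imprecision worth flagging in the ergodic-decomposition step. You replace $\mu$ by an ergodic component "with entropy still larger than $\delta$", but the argument that follows uses the stronger property $h(\nu)\geq h(\mu)$ for the chosen component $\nu$: Ruelle gives exponents of absolute value at least $h(\nu)$ (not $h(\mu)$), and Katok's theorem yields horseshoes of entropy near $h(\nu)$ (not $h(\mu)$). If you only know $h(\nu)>\delta$, you would only conclude $g_{\Per^\delta}\geq h(\nu)-\eta$, which need not reach $h(\mu)$. The fix is standard: since $h(\mu)=\int h(\mu_x)\,d\mu(x)$, the set of $x$ with $h(\mu_x)\geq h(\mu)$ has positive $\mu$-measure, so you may pick an ergodic component $\nu$ with $h(\nu)\geq h(\mu)>\delta$ and run the rest of the argument verbatim. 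With this correction the proof is complete; nothing else in the write-up is problematic, and in particular the reduction to a hyperbolic horseshoe with periodic-orbit Lyapunov exponents above $\delta$ is indeed built into Katok's theorem as you say.
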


Therefore any $C^\infty$ surface diffeomorphism or interval map $f$ is asymptotically $\Per^\delta$-expansive and satisfies  $g_{\Per^\delta} \geq \htop(f)$ for any $0<\delta<\htop(f)$. Stronger lower bounds follow from the Markov representations built  in \cite{sar},\cite{Buz} and from Gurevic's theory:

\begin{theo}\cite{sar}\cite{Buz}(See Appendix A)
Let $f$ be a $C^r$ surface diffeomorpshim (or interval map) with $r>1$ admitting a measure of maximal entropy. Then for any $0<\delta< h_{top}$ there is a positive integer $p$ such that
$$\liminf_{n\rightarrow +\infty, \ p\mid n}e^{-n\htop(f)}\sharp \Per_n^\delta>0.$$
\end{theo}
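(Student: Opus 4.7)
The plan is to push the problem into a countable-state topological Markov shift via the symbolic codings of Sarig \cite{sar} (surface case) and a variant of Buzzi \cite{Buz} (interval case), and to invoke Gurevic theory together with Vere-Jones' criterion for strong positive recurrence. Fix $0<\delta<\htop(f)$ and extract an ergodic measure of maximal entropy $\mu$ by ergodic decomposition of any maximal measure. Ruelle's inequality applied to $f$, and to $f^{-1}$ in the surface case, forces the positive Lyapunov exponent of $\mu$ to be at least $\htop(f)>\delta$, so $\mu$ is hyperbolic with exponents $\delta$-away from zero and can serve as input to a coding at scale $\delta$.

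Invoking the Sarig (respectively the modified Buzzi) coding at scale $\delta$ produces a countable-state TMS $(\Sigma,\sigma)$ together with a finite-to-one factor map $\pi:\Sigma\to M$ such that $\mu$ admits an ergodic lift, and every $\sigma$-periodic orbit projects to an orbit in $\Per^\delta$. For the interval case, Buzzi's original construction does not by itself record enough derivative information to guarantee that the coded periodic orbits are $\delta$-repelling; a mild enrichment of the alphabet (tracking derivative data at scale $\delta$) is required, and carrying out this modification while preserving the Markov and finite-to-one properties is the main new ingredient of Appendix A.

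Next I would decompose $\Sigma$ into its irreducible components: the ergodic lift of $\mu$ is concentrated on a single component $\Sigma_0$, whose Gurevic entropy is therefore equal to $\htop(f)$. By the Gurevic--Vere-Jones theorem (as exploited by Sarig and Buzzi in this symbolic context), existence of a measure of maximal entropy on an irreducible countable TMS is equivalent to strong positive recurrence. Fixing any state $a\in\Sigma_0$ whose loop-length gcd equals $p$, Vere-Jones' renewal estimate then yields
$$\liminf_{n\to +\infty,\ p\mid n} e^{-n\htop(f)}\,Z_n(a)>0,$$
where $Z_n(a)$ counts loops of length $n$ based at $a$; each such loop corresponds to a distinct $\sigma$-periodic point of $\Sigma_0$ of period $n$.

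It remains to transfer this estimate from $\Sigma$ to $M$ via $\pi$. Since $\pi$ is only finite-to-one, two distinct loops may a priori project to the same $f$-orbit, and the main technical step is to bound this multiplicity by a constant $N$ independent of $n$. For the surface case this uniform bound is supplied directly by Sarig's bounded-multiplicity theorem; for the modified interval coding it has to be re-established by hand, and this is exactly where the details of the modification enter. Granted $N$, we conclude $\sharp\Per_n^\delta\geq Z_n(a)/N$, giving the desired estimate with the same period $p$. The principal obstacle is therefore the construction of the modified interval coding together with the verification that it simultaneously enjoys bounded multiplicity, projects periodic orbits into $\Per^\delta$, and represents the maximal measure on a strongly positive recurrent irreducible component; this is the substance of Appendix A.
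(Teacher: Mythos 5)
Your overall strategy (code via a countable-state topological Markov shift, invoke Gurevic/Vere-Jones, transfer the loop count back to $M$) is the right one and matches the paper in the surface case. However two points need correcting.

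First, you write that existence of a measure of maximal entropy on an irreducible countable TMS is equivalent to \emph{strong} positive recurrence, and invoke Vere-Jones' renewal estimate under that hypothesis. This is not correct: by Gurevic's theorem existence of a measure of maximal entropy is equivalent to (plain) \emph{positive} recurrence, and Vere-Jones' classification already gives, for positive recurrent chains, that $Z_n(a) R^{-n}$ (along the period $p$) converges to a positive constant. Strong positive recurrence is a strictly stronger condition which is in fact explicitly acknowledged in the paper's introduction to be an open problem for Sarig's coding of $C^\infty$ surface diffeomorphisms; you cannot assume it. Fortunately the liminf estimate only needs positive recurrence, so the proof survives once the hypothesis is corrected.

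Second, your treatment of the interval case takes a genuinely different (and more delicate) route than the paper. You propose enriching Buzzi's alphabet to track derivative data so that \emph{every} loop in the coded shift projects to a $\delta$-repelling periodic orbit, and then you worry about controlling the multiplicity of the projection. The paper instead replaces Buzzi's natural partition by a countable partition $P_\alpha$ with bounded distortion $e^\alpha$ on each element, builds the Hofbauer--Buzzi diagram from it, and then does \emph{not} claim that all loops give $\delta$-repelling orbits. Rather, it counts the loops whose associated $n$-monotone branch $I\subset I_\gamma$ is short, of length $\leq e^{(\alpha-1)n\htop(f)}|I_\gamma|$: a disjointness argument shows these still have $\liminf a'_n e^{-n\htop(f)}>0$, and the mean value theorem combined with bounded distortion forces $\inf_{I}|(f^n)'|\geq e^{((1-\alpha)\htop(f)-\alpha)n}$, which beats $e^{\delta n}$ for $\alpha$ small. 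Each such expanding branch carries exactly one $n$-periodic point and distinct branches are disjoint, so $\sharp\Per_n^\delta\geq a'_n$ with no multiplicity constant at all. This sidesteps entirely the bounded-multiplicity issue you flag as the main technical obstacle; in the paper that difficulty simply does not arise for intervals, and for surfaces it is discharged directly by Sarig's finite-to-one property as you say. So your proposal would require proving more (injectivity of an enriched coding on periodic points) than the published argument, whose shortcut — bounded distortion plus short branches — is the actual substance of the modification in Appendix A.
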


The main statements of  the Introduction  follow then directly  from  Corollary  \ref{conc}.

\begin{ques}Is any $C^\infty$ diffeomorphism $f:M\rightarrow M$ with $\dim(M)\geq 3$  asymptotically $\Per^\delta$-expansive for all $0<\delta<\htop(f)$?
\end{ques}

A symbolic extension is a topological extension by a subshift over a finite alphabet (a symbolic extension does not need to be Markovian and the extension does not have to be finite-to-one).  Existence of symbolic extensions for smooth systems has been widely studied. Such extensions are  known for $C^\infty$  smooth systems  but also  for $C^r$ (with $r>1$) interval and surface maps (it is still open in higher dimension for finite $r>1$). In \cite{bdo} the authors refine the abstract theory of symbolic extensions developed in \cite{BD} to build uniform generators (see Theorem \ref{sm} in the Introduction).  This is done not only  by controlling the entropy
at different scales as in \cite{BD} but also but also by using the local periodic growth, captured by the map $\mu\mapsto g_{\mathcal{P}}^*(\mu)$. Uniform generators are directly related with symbolic extensions with a Borel embedding (see Theorem 1.2 in \cite{bdo}). 
In this context we get  with Theorems \ref{mainn} and \ref{key}:

\begin{cor}\label{ccc}(See Appendix C)
Let $(M,f)$ be a $C^r$ surface diffeomorphism, resp. a $C^r$ interval map, with $r>1$. 

Then for any $\delta>0$, there is a symbolic extension
$\pi:(Y,S)\rightarrow (M,f)$ and a Borel embedding $\psi:B\rightarrow Y$ with $B$ a Borel set with full measure for every ergodic measure except  periodic measures in $\bigcup_n \left( \Per_n\setminus \Per_n^\delta\right)$ such that $\psi\circ f=\sigma\circ \psi$ and $\pi\circ \psi=\Id_B$ and

$$\sup_{\nu,  \ S^*\nu=\nu \text{ and } \pi^*\nu=\mu}h(\nu)= E(\mu),$$

where for any for any $f$-invariant measure $\mu$ we let:
$$E(\mu):=h(\mu)+\frac{2(3r-1)}{r(r-1)}\left(\chi^+(\mu,f)+\chi^+(\mu,f^{-1})\right),$$

$$ \text{resp. }  E(\mu)=  h(\mu)+\frac{\chi^+(\mu,f)}{r-1}.$$

Moreover the cardinality of the alphabet of $Y$ may be chosen to be less than or equal to $e^{ \max \left( \sup_{\mu}E(\mu),\pper^\delta \right) }+1$ with  $\pper^\delta:=\sup_{n\in \mathbb{N}\setminus \{0\}}\frac{\log \sharp \Per_n^\delta}{n}$.
\end{cor}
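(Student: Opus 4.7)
The plan is to combine the pointwise local periodic growth bounds from Theorems \ref{mainn} and \ref{key} with the abstract symbolic extension machinery developed in \cite{bdo}. That machinery extends the Boyle--Downarowicz theory \cite{BD} by incorporating the map $\mu\mapsto g^*_{\mathcal{P}}(\mu)$: its main consequence is that any affine upper semicontinuous function $E$ on invariant measures satisfying $E(\mu)\geq h(\mu)+g^*_{\mathcal{P}}(\mu)$ is realized as the fibrewise entropy of a symbolic extension equipped with a Borel embedding on the complement of $\bigcup_n(\Per_n\setminus \mathcal{P}_n)$, the symbolic data being precisely a uniform $\mathcal{P}$-generator in the sense of Corollary \ref{sm} (this is Theorem 1.2 of \cite{bdo}).

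Next I would verify that the function $E$ written in the statement is a valid input with $\mathcal{P}=\bigcup_n \Per_n^\delta$. Affinity is immediate: $h$ and the maps $\mu\mapsto \chi^+(\mu,f^{\pm 1})$ are affine on invariant measures. Upper semicontinuity of $h$ for $C^r$ ($r>1$) interval maps and surface diffeomorphisms is in \cite{new,Buz}, and upper semicontinuity of $\chi^+(\cdot,g)$ follows from Fekete's lemma applied to the subadditive cocycle $\log\|T_xg^n\|$. For the domination, in the interval case Theorem \ref{mainn} gives the required inequality directly. In the surface case, Theorem \ref{key} yields
\[
g^*_{\Per^\delta}(\mu)\leq \tfrac{2(3r-1)}{r(r-1)}\max\!\bigl(\chi^+(\mu,f),\chi^+(\mu,f^{-1})\bigr)\leq \tfrac{2(3r-1)}{r(r-1)}\bigl(\chi^+(\mu,f)+\chi^+(\mu,f^{-1})\bigr)=E(\mu)-h(\mu),
\]
using that both Lyapunov exponents are nonnegative. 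The alphabet bound is inherited from the Shannon budget in Corollary \ref{sm} applied with entropy ceiling $\sup_\mu E(\mu)$.

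The hard part will be ensuring that the construction of \cite{bdo} really delivers equality $\sup_\nu h(\nu)=E(\mu)$ and not merely the upper bound that the domination suggests; this requires the fine interpolation argument of \cite{bdo} producing, at each scale, partitions whose entropy defect is tuned to the prescribed function. A subsidiary difficulty is the passage from the $\max$ in Theorem \ref{key} to the symmetric sum of $\chi^+$ appearing in $E(\mu)$: this is forced by the affinity requirement on $E$, and matches the bilateral structure of the Yomdin-type reparametrisation underlying \cite{bdo}, which must handle $f$ and $f^{-1}$ on equal footing.
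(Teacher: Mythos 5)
Your high-level plan matches the paper's: plug the bounds on $g^*_{\Per^\delta}$ from Theorems \ref{mainn} and \ref{key} into the symbolic-extension-with-embedding machinery of \cite{bdo}, using that the $\max$ in Theorem \ref{key} is dominated by the symmetric sum appearing in $E$. But there is a genuine gap in how you characterise the admissible functions $E$. You write that the machinery of \cite{bdo} realises \emph{any affine upper semicontinuous function} satisfying $E(\mu)\geq h(\mu)+g^*_{\mathcal{P}}(\mu)$. That is not the correct hypothesis: the relevant theorem (Theorem 5.5 of \cite{bdo}, restated as Theorem \ref{refo} in Appendix C) requires $E$ to be an affine bounded \emph{superenvelope of the entropy structure}. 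For $C^r$ systems with $r<\infty$ the entropy function $h$ is in general not upper semicontinuous and the tail entropy need not vanish, so ``affine, u.s.c., and $\geq h+g^*$'' is strictly weaker than ``affine superenvelope dominating $h+g^*$.'' Your verification (affinity of $h$ and of $\chi^+$, u.s.c.\ of $h$ by Newhouse/Buzzi, u.s.c.\ of $\chi^+$ by subadditivity) therefore does not establish the hypothesis you need. In the paper, the fact that $\mu\mapsto h(\mu)+\frac{2(3r-1)}{r(r-1)}\bigl(\chi^+(\mu,f)+\chi^+(\mu,f^{-1})\bigr)$ (resp.\ $\mu\mapsto h(\mu)+\chi^+(\mu,f)/(r-1)$) \emph{is} a superenvelope is imported as a nontrivial external input from the earlier symbolic-extension results \cite{Burguet,dowm}; it is not derivable from the pointwise bounds of this paper alone. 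Related to this, the equality $\sup_\nu h(\nu)=E(\mu)$ that you flag as ``the hard part'' is not something to be re-proved here: it is exactly what the Boyle--Downarowicz theory delivers for any superenvelope, once the superenvelope property is in hand.

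A secondary omission: Theorem 5.5 of \cite{bdo} is stated with an auxiliary function $u_1$ (built from harmonic extensions $(\mathfrak{Q}_k)^h$ of periodic-ball counts) rather than directly with $g^*_{\mathcal{P}}$. The paper's Appendix C spends its last paragraph precisely on bridging this, showing that an affine superenvelope dominating $h+g^*_{\mathcal{P}}$ also dominates $h+u_1$ via harmonicity of $E-h$ and $\mathfrak{P}_k\geq(\mathfrak{Q}_k)^h$ on ergodic measures. Your proposal glosses over this translation, treating \cite{bdo} as if it were stated in terms of $g^*_{\mathcal{P}}$ directly. Neither issue affects your otherwise correct understanding of how Theorems \ref{mainn} and \ref{key} enter, but without the superenvelope input and the $u_1$-to-$g^*_{\mathcal{P}}$ reduction the argument does not close.
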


Corollary \ref{sm} stated in the introduction follows then from Theorem 1 in \cite{bdo} by taking $r=\infty$. Indeed, in this case the alphabet of $Y$ may be chosen less than or equal to $e^{\max(\htop(f),\pper^\delta)}+1$ and a uniform $\delta$-generator is given by $P:=\psi^{-1}Q$ with $Q$ being the zero-coordinate partition of $Y$ (for a proof we refer to Theorem 1.2 in \cite{bdo} which relates uniform generators and  symbolic extensions with a Borel embedding).\\

The upper bounds on the local periodic growth obtained above in Theorems  \ref{mainn} and  \ref{key} are reminiscent of the following ones obtained by J. Buzzi in \cite{Buz} for the tail entropy of a $C^r$ map $f$ on a compact manifold of dimension $d$: $$h^*(f)\leq \frac{d R(f)}{r}.$$
In fact, the proof of both estimates uses semi-algebraic tools introduced by Y.Yomdin in \cite{Yom}.


\section{The case of interval maps}
The proof of Theorem \ref{mainn} will follow quite directly from the following reparametrization lemma of dynamical balls proved  in \cite{Burguet}. For a $\mathcal{C}^r$ (with $r>1$) smooth map $f$, we let $\|f\|_r:=\max_{s=1,...,[r],r}\|f^{(s)}\|_\infty$ where $\|f^{(s)}\|_\infty$  is the usual supremum norm of the $s$-derivative of $f$ for $s\leq [r]$ (where $[r]$ denotes the integer part of $r$) and $\|f^{r}\|_\infty$ is the $r-[r]$ Holder norm of $f^{([r])}$. We also let $H:[1,+\infty[\rightarrow \mathbb{R}$ be the function defined for all $t\geq 1$ by $H(t)=-\frac{1}{t}\log(\frac{1}{t})-(1-\frac{1}{t})\log (1-\frac{1}{t})$.

\begin{lemma}\label{main}\cite{Burguet}
Let $f:\mathbb{S}^1\rightarrow \mathbb{S}^1$ be a $\mathcal{C}^r$ circle map with $r>1$ and let $\delta>0$.  Then   for any positive integer $p$ there is $\epsilon>0$,  depending only on  $\|f^p\|_r$, such that for all  $x\in  \mathbb{S}^1$ and  for all positive integers $n$ there exists a finite collection of intervals $\mathcal{J}_n(x)$ with the following properties  :

\begin{enumerate}[(i)]
\item $\forall J\in \mathcal{J}_n(x), \ \forall y,z \in J, \ |(f^n)'(y)-(f^n)'(z)|\leq \frac{1}{3}\sup_{t\in J} |(f^n)'(t)|,$
\item $\left\{y\in \mathbb{S}^1, \ |(f^n)'(y)|\geq e^{\delta n} \right\}\cap B(x,n,\epsilon)\subset \bigcup_{J\in \mathcal{J}_n(x)}J,$
\item $\log\sharp \mathcal{J}_n(x) \leq  [n/p]\left(\frac{1}{r-1}+H([\lambda^+_n(x,f^p)-p\delta]+3)\right)\left(\lambda^+_n(x,f^p)-p\delta\right)+[n/p]A+B,$
\end{enumerate}
where $A$ is a constant depending only on $r$, whereas $B$ is a constant depending only on $f$ and $p$ (not on $n$). Also we have used the notation  $$\lambda^+_n(x,f^p):=\frac{1}{[n/p]}\sum_{j=0}^{[n/p]-1} \log^+|(f^p)'(f^{jp}x)|.$$

\end{lemma}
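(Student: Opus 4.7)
The plan is an induction over the $N:=[n/p]$ iterates of $g:=f^p$, following Yomdin's $\mathcal{C}^r$ reparametrization paradigm. Start with $\mathcal{J}^{(0)}=\{B(x,\epsilon)\}$, the ambient $\epsilon$-ball, with $\epsilon>0$ chosen small enough in terms of $\|g\|_r$ so that $g^k(B(x,kp,\epsilon))$ keeps uniformly bounded length for every $k\le N$. Given a family $\mathcal{J}^{(k)}$ covering $B(x,kp,\epsilon)$ on which $(g^k)'$ already enjoys a distortion estimate of the form (i) with some slack, refine each $J\in\mathcal{J}^{(k)}$ as follows: push $J$ forward by $g^k$ and apply the one-dimensional Yomdin reparametrization to the $\mathcal{C}^{r-1}$ function $g'$ restricted to $g^k(J)$; this produces a subdivision of $g^k(J)$ into pieces on which $g'$ has bounded distortion, and the number of pieces is bounded by $|g'(g^kx)|^{1/(r-1)}+O(1)$ with the implied constant depending only on $r$. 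Pulling back by $g^k$ refines $J$ into the same number of subintervals, and the exponent $1/(r-1)$ is the hallmark of reparametrizing the $\mathcal{C}^{r-1}$ object $g'$ rather than $g$ itself. Setting $\mathcal{J}_n(x):=\mathcal{J}^{(N)}$ yields a family satisfying (i) and (ii) by construction.

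To obtain the quantitative bound (iii), one works with the truncated quantities $\alpha_k^\delta:=[\log^+|g'(g^kx)|-p\delta]_+$: the selection $|(f^n)'|\ge e^{\delta n}$ allows us to discard all subintervals whose accumulated log-expansion is below the target $n\delta$, which at each step amounts to subtracting $p\delta$ from the Yomdin splitting cost. The per-step Yomdin contribution becomes $\alpha_k^\delta/(r-1)+O(1)$, and summing over $k$ produces the leading term $\tfrac{N}{r-1}(\lambda_n^+(x,f^p)-p\delta)$. The secondary combinatorial factor, captured by the entropy function $H$, comes from counting the admissible \emph{profiles} of truncated per-step expansions $(\alpha_k^\delta)_{k=0}^{N-1}$ compatible with the prescribed average $M:=\lambda_n^+(x,f^p)-p\delta$: a Stirling bound for the associated multinomial coefficient yields the factor $\exp\bigl(NM\cdot H(M+3)\bigr)$, from which (iii) follows by combining with the Yomdin term and absorbing the $n-pN\le p-1$ tail iterates into the constant $B$.

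The main obstacle is twofold. First, applying the Yomdin reparametrization to $g'$ in the Hölder regime (non-integer $r$) requires a Hölder version of the Landau-Kolmogorov inequality to control intermediate derivatives, and the resulting Yomdin constant $A$ must be shown to depend only on $r$ and not on $f$. Second, propagating the distortion bound (i) through the $N$ steps without the slack exploding forces a careful calibration of $\epsilon$ in terms of $\|g\|_r$: one must check that the accumulated distortion errors form a convergent geometric-like series, independent of $n$. Once these two technical ingredients are settled, the combinatorial/Stirling estimate and the split/skip bookkeeping become routine.
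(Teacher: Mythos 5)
Note first that the paper itself does \emph{not} prove this lemma: it is quoted verbatim from the earlier work \cite{Burguet} (``Symbolic extensions in intermediate smoothness on surfaces''), and the present paper only develops the analogous machinery in dimension two (the tangent-map Reparametrization Lemma \ref{tec} and the hexagon construction of Section~7). So there is no in-paper proof to compare against; I evaluate your sketch on its own merits and against the spirit of the Section~7 argument.

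Your outline correctly identifies the three structural ingredients behind bound~(iii): the $1/(r-1)$ exponent arises because one controls the derivative $(f^n)'$, a $C^{r-1}$ object, rather than $f^n$ itself; the factor $H(\cdot)$ is a profile-counting/Stirling estimate exactly of the type in the paper's combinatorial Fact in step~3 of Section~7.2 (the number of integer sequences $(k_0,\ldots,k_m)$ with $\sum k_i\le(m+1)\sigma$ is $\le e^{(m+1)\sigma H(\sigma)}$); and the $p\delta$-subtraction is licensed by the restriction to $\{|(f^n)'|\ge e^{\delta n}\}$. You also correctly flag the H\"older regime and the inductive distortion propagation as real technical concerns.

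The gap lies in the central inductive step. You assert that ``applying the one-dimensional Yomdin reparametrization to the $C^{r-1}$ function $g'$ restricted to $g^k(J)$ produces a subdivision of $g^k(J)$ into pieces on which $g'$ has bounded distortion, and the number of pieces is bounded by $|g'(g^kx)|^{1/(r-1)}+O(1)$ with the implied constant depending only on $r$.'' This is not a consequence of the Yomdin--Gromov Algebraic Lemma as stated (Theorem~\ref{firs}): that lemma reparametrizes a semialgebraic set so that a \emph{composition} has $C^r$ norm $\le 1$; it does not, by itself, output subintervals on which a $C^{r-1}$ function enjoys \emph{multiplicative} bounded distortion, nor a piece count shaped like $|g'|^{1/(r-1)}$. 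As the paper makes explicit in Section~7.4 for the two-dimensional case, producing such a count requires a \emph{weighted} Yomdin reparametrization carried out simultaneously on $g$ (smoothness $r$) and on the normalized derivative $(g^k)'/\|(g^k)'\|$ (smoothness $r-1$), with homothetic renormalizations keyed to the per-step expansion $e^{a_k}$ and a Faa di Bruno / Taylor--Lagrange bookkeeping for the cocycle; it is precisely this two-scale normalization, together with the choice of $\epsilon$ small in terms of $\|f^p\|_r$, that replaces the naive $\|g\|_r^{1/r}$ count by the dynamical count $e^{(a_k-p\delta)/(r-1)}$ and pushes the $\|f^p\|_r$-dependence into the $n$-independent constant $B$. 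Your sketch names the desired conclusion without supplying this mechanism, so the per-step subdivision bound --- the heart of item~(iii) --- is left unjustified.
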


\begin{proof}[Proof of Theorem \ref{mainn}]For a fixed $\delta>0$ we will  in fact prove the following stronger upper bound  for any $f$-invariant measure $\mu$: $$g_{\Per^\delta}^*(\mu)\leq \frac{\chi^+(\mu)-\delta}{r-1}.$$ Let $\mu$ be an $f$-invariant measure and let $0<\gamma\ll 1$. We consider an integer $p=p(\mu,\gamma,r)$ so large that $\frac{1}{p}\int \log^+|(f^p)'| d\mu\simeq \chi^+(f,\mu)$, $\frac{A}{p}\ll 1$ (with $A=A(r)$ as in Lemma \ref{main}) and $H(p\gamma) \ll 1$.   We apply Lemma \ref{main} for $f$, $p$ and $\delta$ and we let $\epsilon>0$ be as in the statement.  It follows from the first item there is at most one point of $\Per^\delta_n$ in any $J\in \mathcal{J}_n(x)$ as $f^n$ is expanding on any such interval $J$ intersecting $\Per_n^\delta$ (once one takes $n$ with $e^{n\delta}>\frac{4}{3}$). Then for any  periodic measure $\nu_n$ supported on the orbit of a periodic point $x\in \Per_n^\delta$ with minimal period $n$ we have whenever $\epsilon_k$ is less than $\epsilon$:
$$\mathfrak{P}_k(\nu_n)= \frac{1}{n}\int\log \sharp \left(\mathcal{P}_n\cap B(x,n,\epsilon_k)\right )d\nu_n(x)\leq\frac{1}{n} \int \log\sharp \mathcal{J}_n(x)d\nu_n(x).$$  If  $\lambda^+_n(x,f^p)-p\delta\geq p\gamma$ we have $H([\lambda^+_n(x,f^p)-p\delta]+3))\left(\lambda^+_n(x,f^p)-p\delta\right)<<\lambda^+_n(x,f^p)-p\delta$ and if not,  $H([\lambda^+_n(x,f^p)-p\delta]+3)\left(\lambda^+_n(x,f^p)-p\delta\right)\leq p\gamma\log 2 $. Thus for $n>>\max(B,p)$ with $B$ as in Lemma \ref{main} we get 
\begin{eqnarray*}
\frac{1}{n } \int \log\sharp \mathcal{J}_n(x)d\nu_n(x)&\lesssim &\frac{1}{ p(r-1)} \int \left(\lambda^+_n(x,f^p)-p\delta\right)d\nu_n(x),\\
&\lesssim &\frac{1}{ p(r-1)} \frac{1}{[n/p]}\sum_{j=0}^{[n/p]-1} \int \left(\log^+|(f^p)'(f^{jp}x)|-p\delta\right)d\nu_n(x).
\end{eqnarray*}
As all terms in this last sum are equal we have
$$\frac{1}{n } \int \log\sharp \mathcal{J}_n(x)d\nu_n(x) \lesssim \frac{1}{ p(r-1)} \int \left(\log^+|(f^p)'(x) |-p\delta\right)d\nu_n(x).$$
 When $\nu_n$ is going to $\mu$ then this last term is going by continuity of the integrand to $$\frac{1}{p(r-1)}\left(\int \log^+|(f^p)'|d\mu-p\delta\right)\simeq \frac{\chi^+(f,\mu)-\delta}{r-1}.$$ We conclude that
$g_{\Per^\delta}^*(\mu)=\lim_k\limsup_{\nu_n\rightarrow \mu} \mathfrak{P}_k(\nu_n)\leq \frac{\chi^+(f,\mu)-\delta}{r-1}.$
\end{proof}

\section{$n$-hyperbolic hexagons}
In the next sections we consider a $C^r$ surface diffeomorphism with $r>1$. The proof of the Main Theorem for such a diffeomorphism follows the same strategy as the above one dimensional case. In  Lemma \ref{main} we reparametrized dynamical balls by intervals with bounded distortion so that they contain each at most one repelling periodic point. In dimension two we introduce now local hyperbolic hexagons which play somehow the same role as these intervals. \\

We first define a notion of "finite time local hyperbolic sets" involving cones and  we  recall some related terminology and notations.
\subsection{Cones}
Let $(E,\|\cdot\|)$ be a two-dimensional Euclidean vector space and $v,w$ be two  (nonzero) noncolinear vectors in $E$. The associated cone $C(v,w)$ of $E$ is the subset of the vector space $E$ given by  $C(v,w)=\{\alpha v+\beta w, \ \alpha\beta \geq 0\}$.

 The \textit{aperture} of the cone $C(v,w)$, which will be denoted by  $\ap(C(v,w))$, is the unsigned angle $\angle (v,w) \in ]0,\pi[$ between $v$ and $w$ and the center of $C(v,w)$ is the line $\mathbb{R}(v+w)$.  We also refer to the two unit vectors generating this line as oriented centers of the cone. A subset $C$ of $E$ is said to be a cone when there are $v,w\in E$ with $C=C(v,w)$.
For $\alpha>0$  and a  cone $C$ with aperture less than $\frac{\pi}{\alpha}$  we let $C(\alpha)$ be the cone with the same center as $C$ but with $\ap(C(\alpha))=\alpha \ap(C)$. 
Finally two cones $C, C'$ are said transverse when $C\cap C'=\{0\}$ and $\alpha$-transverse for $\alpha>0$ when for any $(u,u')\in C\times C'$ the unsigned angle between $u$ and $u'$ is in $]\alpha, \pi-\alpha[$. A regular $C^1$ curve $\lambda:[0,1]\rightarrow E$ is said tangent to a cone $C$ when $0\neq \lambda'(t) \in C$ for all $t\in [0,1]$.

For any $0<\alpha<\pi$ we fix  $\mathcal{C}_\alpha$  the family of cones of $\mathbb{R}^2$ (endowed with the usual Euclidean norm) with aperture equal to $\alpha$ centered at the lines given by the $[4/\alpha]+1$-roots of the unity in $\{z\in \mathbb{C}, \ Re(z)\geq 0\} \simeq \mathbb{R}\times \mathbb{R}^+\subset \mathbb{R}^2$. Clearly the cones $C$ in $\mathcal{C}_\alpha$ but also the cones $C(1/2)$ for $C\in \mathcal{C}_\alpha$ are covering $\mathbb{R}^2$ and the cardinality of $\mathcal{C}_\alpha$ is less than $4/\alpha+1$. Finally we let $\mathfrak{C}_\alpha$ be the collection of pairs of cones - later called bicones -   $(C,C')\in \mathcal{C}_\alpha\times \mathcal{C}_\alpha$ given by $\alpha$-transverses cones.


\subsection{ $n$-hyperbolic set}\label{ffh}
Let $n$ be a fixed nonegative integer. We consider a sequence $(E_k,\|\cdot \|_k)_{k=0,...,n}$ of two-dimensional Euclidean spaces with $(E_0,\|\cdot\|_0)=(E_n,\|\cdot\|_n)$ and we denote by $B_k$ the unit ball of $(E_k,\|\cdot\|_k)$ and by $0_k$ the zero of $E_k$ for $k=0,...,n$. We fix some isometry between $(E_0,\| \cdot \|_0)$ and $(\mathbb{R}^2,\| \cdot \|)$ with the usual Euclidean norm $\| \cdot \|$ and we again denote by $\mathfrak{C}_\alpha$ the corresponding (via the isometry) family of cones in $E_0$.

 Let $\mathcal{F}:=(f_k:B_k\rightarrow E_{k+1})_{0\leq k< n}$ be a sequence of $C^1$ maps such that $f_k(0_k)=0_{k+1}$ and $f_k$ is a diffeomorphism onto its image  for all $k$. For $0<l\leq n$ we denote by  $f^l=f_{l-1}\circ...\circ f_0$  the $l^{th}$-composition defined on the $l$-dynamical ball $B(\mathcal{F},l):=\{x\in B_0, \ f^jx\in B_j \text{ for }0\leq j<l-1\}$. By convention we let $f^0$ to be the identity map on $B_0$.

\begin{defi}\label{dsd}
  For $n\in \mathbb{N}$, $C>0$,  $\delta>0$ and $\alpha>0$, an open subset $U_n$ of $B(\mathcal{F},n)\subset E_0$ is said to be $(C,\delta,\alpha,n)$-hyperbolic (or shortly $n$-hyperbolic) when there are two bicones  $ (C_u,C_s)$ in $\mathfrak{C}_\alpha$ and  two (transverse) $C^\infty$ smooth unit vector fields $e_u:U_n\rightarrow C_u$ and $e_s:U_n\rightarrow C_s$ such that for any $y\in U_n$  we have :
\begin{itemize}
 \item $T_yf^n \left(e_{u}(y)\right)\in C_u$ with $\|T_yf^n\left(e_u(y)\right)\|_n\geq Ce^{n\delta}$,
 \item $T_yf^n \left(e_{s}(y)\right)\in C_s$  with $\|T_yf^n\left(e_s(y)\right)\|_n\leq C^{-1}e^{-n\delta}$.
 \end{itemize}
  The vector fields $e_u$ and $e_s$ are called respectively the $n$-expanding and $n$-contracting fields of the $n$-hyperbolic set $U_n$.
\end{defi}


If we let $v_u$ be an oriented center of the cone $C_u$, then by changing $e_u$ for  $-e_u$ we may assume $\angle \left(v_u,e_{u}(y)\right)\leq \frac{\pi}{2}$ for all $y\in U_n$. 
 As the cones $C_u$ and $C_s$  are transverse  and have the  same aperture
 we can write $v_u=\lambda  e_s +\beta e_u$ with  $|\lambda| \leq |\beta|$. Therefore, for $C$, $\alpha$  and $\delta$ fixed, provided  $Ce^{n\delta}$ is large enough, the vector $T_xf^n(v_u)$ belongs to $C_u(3/2)$ and $\|T_xf^n(v_u)\|\geq \frac{C}{3}e^{n\delta}$. Thus we may always assume $e_u=v_u$ for large $n$ if we  relax the first item of Definition \ref{dsd} by replacing $C_u$ by $C_u(3/2)$ and $C$ by $C/3$. 

 \begin{rem}
The $n$-expanding and $n$-contracting fields are not canonical.  When working later on with the dynamical system given by a surface diffeomorphims (and not a sequence $\mathcal{F}$ as above), they will not a priori correspond with the Oseledets unstable and stable directions, which do not vary smoothly and are not globally defined.  Indeed the $n$-expanding and $n$-contracting fields only depend on the $n$-first iterations of the dynamical system. However for a saddle hyperbolic $n$-periodic point $y$ lying in a $n$-hyperbolic set $U_n$ the vectors  $e_{s}(y)$ and $T_yf^ne_u(y)$ associated to $U_n$ are respectively close to the usual  (Oseledets) stable and unstable spaces (whenever $Ce^{n\delta}$ is large enough).
\end{rem}

\subsection{Hyperbolic hexagons}\label{ssecc}
We define in this subsection hyperbolic hexagons as a generalization of the usual notion of rectangles that we first recall now.
Let $U$ be an open set of $E_0$. For two transverse one dimensional foliations $\mathcal{F}_u$ and $\mathcal{F}_s$ on $U$ a  rectangle is the image of a bifoliation chart, i.e of a topological embedding  $\phi:[0,1]^2\rightarrow U$ which straightens simultaneously the stable and unstable foliations: for all $x$ and $y$ in $[0,1]$ the sets $\phi(\{x\}\times [0,1])$ and $\phi([0,1]\times  \{y\})$ are pieces of a  leaf of  $\mathcal{F}_u$ and $\mathcal{F}_s$ respectively.

 If $\mathcal{F}_u$ and $\mathcal{F}_s$ are generated by two transverse $C^\infty$ smooth non vanishing vector fields $e_u$ and $e_s$ on $U$  we introduce the following generalization of rectangles :
  \begin{defi}\label{dref} With the previous notations an open  subset $H$ of $U$ is said to be an hexagon  when any two points in $H$ may be joined by a $C^1$ regular curve  in $H$ which is either tangent everywhere  to $C(e_u,e_s)$ or  tangent everywhere  to  $C(e_u,-e_s)$.
 \end{defi}

 For $\epsilon\in \{-1,1\}$ we let $\mathcal{F}_u^\epsilon$ and $\mathcal{F}_s^{\epsilon}$ be the oriented one dimensional foliations associated to $\epsilon e_u$ and $\epsilon e_s$. Then an hexagon is an open set satisfying the following (oriented) accessibility-like property \footnote{See \cite{BP} for the usual notion of accessibility in smooth dynamical systems.}:

 \begin{lemma}
An open  subset $H$ of $U$ is an hexagon if and only if for any $x,y\in H$ there is $\epsilon_u,\epsilon_s\in \{-1,1\}$ and $x=z_1,...,z_p=y\in U$ such that for all $1\leq i\leq p$ the points $z_{i-1}$ and $z_i$ are in the same leaf of $\mathcal{F}_\alpha$ and the path in this leaf  from $z_{i-1}$ to $z_i$ is oriented as $\mathcal{F}_\alpha^{\epsilon_\alpha}$  for  $\alpha=u$ or $s$.
 \end{lemma}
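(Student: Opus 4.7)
The plan is to translate both sides of the equivalence via local bifoliation charts — in which $\mathcal{F}_u$ and $\mathcal{F}_s$ are straightened — into a planar statement about monotone curves versus zigzag paths, then argue the equivalence by staircase approximation and corner smoothing.

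\emph{Setup.} By the Flow Box theorem applied to each of the transverse non-vanishing $C^\infty$ vector fields $e_u,e_s$, every $z\in U$ admits a neighborhood $V$ and a diffeomorphism $\phi\colon V\to (-a,a)^2$ with $\phi_*e_u$ a positive multiple of $\partial_u$ and $\phi_*e_s$ a positive multiple of $\partial_s$. Leaves of $\mathcal{F}_u,\mathcal{F}_s$ become oriented horizontal and vertical segments, and the two cones $C(e_u,\pm e_s)$ become the two pairs of opposite quadrants.

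\emph{$(\Rightarrow)$.} Given a $C^1$ regular curve $\gamma\colon[0,1]\to H$ tangent to, say, $C(e_u,e_s)$, decompose $\gamma'=\alpha e_u+\beta e_s$. The cone condition $\alpha\beta\ge 0$ together with regularity $(\alpha,\beta)\neq(0,0)$ and continuity forces the signs of $\alpha$ and $\beta$ to be globally constant on the open sets where they do not vanish, yielding orientations $\epsilon_u,\epsilon_s\in\{-1,+1\}$. Covering the compact image of $\gamma$ by finitely many bifoliation charts $V_j$, the curve becomes componentwise monotone in each chart's $(u,s)$-coordinates with the prescribed signs, so a uniform staircase approximation (matching sample points at chart overlaps) produces the desired zigzag in $U$, with the horizontal edges oriented as $\epsilon_u\partial_u$ and the vertical ones as $\epsilon_s\partial_s$.

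\emph{$(\Leftarrow)$.} Conversely, from a zigzag $x=z_0,\dots,z_p=y$ with orientations $\epsilon_u,\epsilon_s$, smooth each interior corner $z_i$ inside a bifoliation chart centered at $z_i$: replace short arcs on either side of $z_i$ by a $C^1$ regular arc (for instance a quarter-ellipse in chart coordinates) whose tangent rotates monotonically from $\epsilon_u e_u$ to $\epsilon_s e_s$ through the quadrant $\{\alpha e_u+\beta e_s:\ \epsilon_u\alpha\ge 0,\ \epsilon_s\beta\ge 0\}$. The resulting curve is $C^1$, regular and tangent to $C(\epsilon_u e_u,\epsilon_s e_s)$, which coincides with $C(e_u,e_s)$ when $\epsilon_u=\epsilon_s$ and with $C(e_u,-e_s)$ otherwise. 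The main obstacle is to guarantee that the smoothed curve lies in $H$ (not merely in $U$): since $H$ is open and contains the endpoints $x,y$, the idea is first to subdivide the zigzag finely enough — inserting additional vertices on each leaf segment — so that each refined piece is contained in a bifoliation chart lying inside $H$, and then to perform the smoothing within these charts. Carefully arranging this refinement using the openness of $H$ is the one delicate point of the proof.
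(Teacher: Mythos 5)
The paper explicitly leaves the proof of this lemma (and the one that follows) to the reader, so there is no paper proof to compare your argument against. Your strategy is the natural one — pass to a local bifoliation chart in which the cones $C(e_u,\pm e_s)$ become the two pairs of opposite quadrants, replace a monotone $C^1$ curve by a fine staircase for the forward implication, and smooth the corners of a zigzag for the converse — and the key observation that regularity together with the cone condition forces the $e_u$- and $e_s$-components of $\gamma'$ to have constant sign where they are nonzero (hence $\gamma$ is monotone in chart coordinates) is correct.

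A few points worth tightening. First, a mild misattribution: what you need is not the Flow Box theorem applied separately to $e_u$ and $e_s$ (their flows need not commute, so the two fields cannot in general be simultaneously straightened to $\partial_u$ and $\partial_s$), but rather a bifoliation chart, obtained in dimension $2$ by pairing local first integrals of the two transverse foliations; your stated conclusion — that $\phi_*e_u$ is only a \emph{positive multiple} of $\partial_u$ — is exactly the correct output of that construction, so only the justification needs rewording. Second, the converse direction can be shortened: the zigzag is a compact subset of the open set $H$, so one may pick a single smoothing radius small enough that every smoothed corner (which lies in the convex hull of the corner and short pieces of its two adjacent edges) stays inside $H$; the layered ``refine so each subpiece lies in a chart inside $H$'' step is more than is needed. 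Third, for the record: the lemma as printed has $z_1,\dots,z_p\in U$, but for the converse direction to be true the intermediate points and the connecting leaf arcs must lie in $H$ (as your proof both assumes and produces), since otherwise a disconnected $H$ inside a larger $U$ would satisfy the accessibility condition without being a hexagon; the ``$\in U$'' is almost surely a misprint for ``$\in H$.''
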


In the above statement we  point out that $p$ may depend on $x$ and $y$, but $\epsilon_u, \epsilon_s$ are independent of $i$.  

  When an hexagon $H$ is a subset of a rectangle given by a bifoliation $C^\infty$ smooth chart $\phi:[0,1]^2\rightarrow U$ (it will be always the case in the following) then $\phi^{-1}(H)$ is an hexagon in $]0,1[^2$  for the usual  foliations in vertical and horizontal lines. In this case we have:
\begin{lemma}\label{toutou}
Let $H$ be an open subset of $]0,1[^2$ then the following conditions are equivalent :
\begin{itemize}
\item $H$ is an hexagon for the horizontal and vertical foliations,
\item there are $0<a<b<1$ and $c,d\in [a,b] $ as well as functions $\zeta<\eta:]a,b[\rightarrow ]0,1[$  such that:
\begin{itemize}
\item $\eta$ (resp. $\zeta$) is lower semi-continuous (resp. upper semi-continuous),
\item $\eta$ (resp. $\zeta$) is nondecreasing on $]a,c[$ (resp. $]d,b[$) and nonincreasing on $]c,b[$ (resp. $]a,d[$),
\end{itemize}
 $$ \text{ with} \footnote{Such sets look like usual hexagons. This explains the terminology.} \ H=\left\{ (x,y) \in ]0,1[^2,\ \zeta(x) <  y< \eta(x)\right\}.$$
\item any pair of points in $H$ may be joined by a monotone staircase function (with finitely many steps).
\end{itemize}
\end{lemma}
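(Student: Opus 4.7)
The plan is to prove the cycle $(3) \Rightarrow (1) \Rightarrow (2) \Rightarrow (3)$. For $(3) \Rightarrow (1)$, I would observe that a monotone staircase is a finite concatenation of axis-aligned segments all of whose signed direction vectors lie in a common cone ($C(\partial_x, \partial_y)$ if both coordinates vary in the same direction along the staircase, $C(\partial_x, -\partial_y)$ otherwise); smoothing each of the finitely many corners by a small $C^\infty$ bump inside the open set $H$ then yields a $C^1$ regular curve tangent everywhere to the appropriate cone.

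For $(1) \Rightarrow (2)$, I would first observe that $\pi_x(H) = ]a,b[$ is an open interval (the $x$-component of any monotone curve in $H$ sweeps the full subinterval between its endpoints) and each fiber $H_x$ is an open interval (two points at the same abscissa are connected by a monotone curve whose $x$-component is monotone with equal endpoints, hence constant, so the curve reduces to a vertical segment in $H$). Setting $\eta(x) = \sup H_x$ and $\zeta(x) = \inf H_x$, openness of $H$ gives $\eta$ lower semi-continuous and $\zeta$ upper semi-continuous. The central structural step is to rule out strict V-shapes in $\eta$: for $x_1 < x_2 < x_3$ with $\min(\eta(x_1), \eta(x_3)) > \eta(x_2)$, pick $P_i = (x_i, y_i) \in H$ for $i = 1, 3$ with $y_i \in (\max(\zeta(x_i), \eta(x_2)), \eta(x_i))$; the connecting monotone curve crosses $x = x_2$ at some $(x_2, y^*)$ with $y^* < \eta(x_2) < \min(y_1, y_3)$, contradicting the monotonicity of its $y$-component. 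An analogous argument rules out strict inverted V-shapes in $\zeta$. The required $c, d$ are then produced by a short real-analysis lemma: any $\eta : ]a,b[ \to \mathbb{R}$ with no strict V-shape is unimodal, with $c := \inf\{x \in ]a,b[ : \sup_{y \leq x} \eta(y) = \sup \eta\}$ (taking $c = b$ if this set is empty) making $\eta$ nondecreasing on $]a,c[$ and nonincreasing on $]c,b[$; both monotonicities are verified directly by the no-V property (if $\eta(x_1) > \eta(x_2)$ for some $x_1 < x_2 < c$, the fact $x_2 \notin S$ forces some $x_3 > x_2$ with $\eta(x_3) > \eta(x_1)$, yielding a V; the argument on $]c,b[$ is similar).

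For $(2) \Rightarrow (3)$, I would take $P_i = (x_i, y_i) \in H$ with $x_1 \leq x_2$ and (WLOG, the other case being handled symmetrically via the cone $C(\partial_x, -\partial_y)$) $y_1 \leq y_2$. Define the horizontal slices $A = \{x \in [x_1, x_2]: (x, y_1) \in H\}$ and $B = \{x \in [x_1, x_2]: (x, y_2) \in H\}$, each a relatively open subinterval (the sublevel set $\{\zeta < y_i\}$ and superlevel set $\{\eta > y_i\}$ are both intervals by the unimodal/inverse-unimodal shapes), with $x_1 \in A$ and $x_2 \in B$. If $A \cap B \neq \emptyset$, any $x^* \in A \cap B$ yields the four-step staircase $(x_1, y_1) \to (x^*, y_1) \to (x^*, y_2) \to (x_2, y_2)$: the horizontal segments lie in $H$ by the unimodal bounds $\eta \geq \min(\eta(x_i), \eta(x^*))$ and $\zeta \leq \max(\zeta(x_i), \zeta(x^*))$ on each relevant subinterval, and the vertical segment lies inside $H_{x^*}$. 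When $A \cap B = \emptyset$ there is a gap $[\beta, \alpha] \subset (x_1, x_2)$ on which $\zeta \geq y_1$ and $\eta \leq y_2$, so $H$ is essentially a diagonal strip over the gap; one iterates the construction at intermediate levels $y^* \in (y_1, y_2)$ to step through, and compactness of the gap together with the strict inequality $\zeta < \eta$ (giving a uniformly positive fiber-width on the compact gap) ensures finitely many iterations suffice. The main obstacle is precisely this last subcase: producing a genuinely finite monotone staircase across the diagonal strip requires some combinatorial care to maintain monotonicity while stepping through, which one arranges by a standard compactness argument based on the uniformly positive fiber-width of $H$ on any compact subinterval.
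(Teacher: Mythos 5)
Your proof is correct in outline, and there is nothing in the paper to compare it against: the paper explicitly leaves the proofs of this lemma and the preceding one ``to the reader'' as easy exercises. The cycle $(3)\Rightarrow(1)\Rightarrow(2)\Rightarrow(3)$ is a natural choice, and the heart of your argument --- the ``no strict V-shape'' observation that converts the accessibility structure of $H$ into quasi-concavity of $\eta$ and quasi-convexity of $\zeta$, proved by pushing a monotone curve across the offending abscissa --- is exactly the right mechanism.

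The one place where your write-up stops short of a complete argument, as you yourself flag, is Case $A\cap B=\emptyset$ in $(2)\Rightarrow(3)$. It is worth recording the precise device that closes it, since ``a standard compactness argument'' by itself is not quite enough (naively chaining the overlapping level sets $L_t=\{x:\zeta(x)<t<\eta(x)\}$ does not automatically produce a monotone sequence of sample points). Set $W:=\min_{[x_1,x_2]}(\eta-\zeta)$, which is positive because $\eta-\zeta$ is lower semi-continuous on the compact interval $[x_1,x_2]$. Fix $\delta<W$. The sets $E=\{x\in[x_1,x_2]:\eta(x)\le y_1+\delta\}$ and $F=\{x\in[x_1,x_2]:\zeta(x)\ge y_1\}$ are closed (by the semi-continuities), disjoint (since $\eta-\zeta\ge W>\delta$), and neither equals $[x_1,x_2]$ (since $x_1\notin F$ and $x_2\notin E$); so by connectedness they do not cover $[x_1,x_2]$, and there is $m^*$ with $\zeta(m^*)<y_1$ and $\eta(m^*)>y_1+\delta$, i.e.\ $(m^*,y_1)$ and $(m^*,y_1+\delta)$ both lie in $H$. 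This makes the subproblem $(x_1,y_1)\to(m^*,y_1+\delta)$ an instance of Case 1, and one recurses on $(m^*,y_1+\delta)\to(x_2,y_2)$, whose vertical extent has dropped by $\delta$ while the width bound $W$ can only improve on the smaller horizontal interval; after at most $\lceil (y_2-y_1)/\delta\rceil$ rounds the residual vertical extent is below $W$ and Case 1 applies directly. This is indeed the ``uniformly positive fiber-width plus compactness'' idea you invoke, but the connectedness step and the recursion that keeps the intermediate $x$-coordinates nondecreasing are the combinatorial content that needs to be spelled out for the proof to be airtight.
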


The easy proofs of the two above lemmas are left  to the reader. We will now consider hexagons with respect to the contracting and expanding fields associated to a $n$-hyperbolic structure.

\begin{defi}
Let $U_n$ be a  $n$-hyperbolic set of $\mathcal{F}$  as in the Subsection 5.2. Let $e_u$ and $e_s$ be the $n$-expanding and $n$-contracting associated vector fields.

A subset $R_n$ of  $U_n$  is called a  $n$-hyperbolic hexagon when $R_n$ is an hexagon  with respect to $e_u$ and $e_s$.  When $U_n$ is an $(C,\delta,\alpha,n)$-hyperbolic set with respect to the constants  we will say that $R_n$ is a $(C,\delta,\alpha,n)$-hyperbolic hexagon. 

A set $\tilde{R}_n$ is called a generalized $n$-hyperbolic hexagon when there are $C,\delta,\alpha$ such that $\tilde{R}_n$ is the Hausdorff limit of the closures of a sequence of $(C,\delta,\alpha,n)$-hyperbolic hexagons (then we also say that $\tilde{R}_n$ is  a generalized  $(C,\delta,\alpha,n)$-hyperbolic hexagon). 
\end{defi}


We say $x\in B(\mathcal{F},n)$ is a $n$-periodic point for  $\mathcal{F}:=(f_k:B_k\rightarrow E_{k+1})_{0\leq k< n}$ when $f^nx=x$. The following key lemma will allow us to bound the number of periodic points by counting the number of hyperbolic
hexagons covering a dynamical ball.

\begin{lemma}\label{ff}
For all  $C$, $\delta$, $\alpha$,  there is an integer $N=N(C,\delta,\alpha)$ such that any generalized $(C,\delta,\alpha,n)$-hyperbolic hexagon carries with $n>N$ at most one $n$-periodic point for $\mathcal{F}$.
\end{lemma}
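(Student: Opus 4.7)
The plan is to reduce to the genuine-hexagon case by a Hausdorff-limit argument, and then to show that any genuine $(C,\delta,\alpha,n)$-hyperbolic hexagon $R_n$ contains at most one $n$-periodic point of $\mathcal F$ once $n\geq N(C,\delta,\alpha)$. If $x,y\in R_n$ both satisfy $f^nx=x$ and $f^ny=y$, I will join them by a staircase in $R_n$ (using the accessibility form of the hexagon in Lemma \ref{toutou}), made of arc-length parameterized pieces tangent either to $\epsilon_u e_u$ ($u$-segments, total length $L_u$) or to $\epsilon_s e_s$ ($s$-segments, total length $L_s$), with fixed signs $\epsilon_u,\epsilon_s\in\{\pm 1\}$. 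Let $P_u,P_s$ denote the orthogonal projections of $E_0$ onto the two center lines of $C_u,C_s$: the aperture $\alpha$ and the $\alpha$-transversality give $|P_u w|\geq\cos(\alpha/2)$ for unit $w\in C_u$ and $|P_u w|\leq\cos(\alpha)$ for unit $w\in C_s$, and symmetrically for $P_s$; continuity on the connected set $U_n$ makes the signs of $P_u(e_u),P_u(e_s),P_u(Tf^n e_u),P_u(Tf^n e_s)$ constant.

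The main estimate will come from computing $P_u(y-x)$ in two ways. Along the original staircase one has $|P_u(y-x)|\leq L_u+\cos(\alpha) L_s$; along its $f^n$-image, which still joins $x$ to $y$ by $n$-periodicity, the $u$-pieces are tangent to $\epsilon_u Tf^n e_u$ with $|Tf^n e_u|\geq Ce^{n\delta}$ in $C_u$ (contributing at least $\cos(\alpha/2) Ce^{n\delta} L_u$ in absolute value, with a constant sign) while the $s$-pieces satisfy $|Tf^n e_s|\leq C^{-1}e^{-n\delta}$ (contributing at most $C^{-1}e^{-n\delta} L_s$). Combining,
\[
L_u\bigl(\cos(\alpha/2) Ce^{n\delta}-1\bigr)\;\leq\;L_s\bigl(\cos(\alpha)+C^{-1}e^{-n\delta}\bigr),
\]
so that $L_u\leq O(e^{-n\delta})L_s$ for $n$ large.

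I will then obtain a dual bound by the same scheme applied to the reversed structure. The image $V_n=f^n(U_n)$ is itself $n$-hyperbolic for the reversed sequence $\mathcal F^{-1}$, with the roles of $C_u,C_s$ swapped and the push-forward unit fields $\hat e_u=Tf^n(e_u)/|Tf^n(e_u)|\in C_u$, $\hat e_s=Tf^n(e_s)/|Tf^n(e_s)|\in C_s$ playing the contracting and expanding roles; moreover $f^n(R_n)$ is an $n$-hyperbolic hexagon in $V_n$ containing $x,y$. Working on a minimal L-shaped staircase in $f^n(R_n)$, whose leg-lengths are comparable (up to $\cos(\alpha/2)$) to $|P_u(y-x)|$ and $|P_s(y-x)|$, and using $P_s$ as the unstable projection together with the $Tf^{-n}$-expansion $\geq Ce^{n\delta}$ in the $\hat e_s$-direction, the same two-way computation will yield $|P_s(y-x)|\leq O(e^{-n\delta})|P_u(y-x)|$. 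Combined with $|P_s(y-x)|\geq\tfrac{1}{2}\cos(\alpha/2)L_s$ and $|P_u(y-x)|\leq 2\cos(\alpha)L_s$, which both follow from the first inequality for $n$ large, this forces $L_s=0$, hence $L_u=0$, hence $x=y$.

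The hard part will be this dual step: justifying that a minimal staircase in the possibly highly distorted hexagon $f^n(R_n)$ has leg-lengths comparable to the displacements $|P_u(y-x)|,|P_s(y-x)|$ requires a careful geometric analysis of the image hexagon (or a direct argument with $Tf^{-n}$ acting on well-chosen staircases), and it is there that the threshold $N(C,\delta,\alpha)$ will be determined. The extension from genuine to generalized $(C,\delta,\alpha,n)$-hyperbolic hexagons is then a routine limiting argument, since the condition $f^n z=z$ is closed and two distinct periodic points of a Hausdorff limit would already sit in any closed approximating hexagon, contradicting the conclusion just obtained.
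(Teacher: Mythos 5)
Your first estimate, and the basic idea behind it (compute $y-x$ once along the staircase and once along its $f^n$-image, then exploit the hyperbolic contrast), is exactly the engine of the paper's proof. But the paper runs it \emph{vectorially} and you run it through the scalar projection $P_u$, and that choice is what creates the gap you flag.

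The paper joins $p,q$ by a $C^1$ curve $\lambda$ with $\lambda'=\lambda'_u+\lambda'_s$, sets $x_\gamma=\int\lambda'_\gamma\in C_\gamma$ and $w_\gamma=\int T_{\lambda}f^n(\lambda'_\gamma)\in C_\gamma$, and uses $p-q=x_u+x_s=w_u+w_s$. From $\|w_u\|\geq CC(\alpha)e^{n\delta}\|x_u\|$ it follows that $w_u-x_u\in C_u(2)$; from $\|w_s\|\leq C^{-1}C^{-1}(\alpha)e^{-n\delta}\|x_s\|$ it follows that $x_s-w_s\in C_s(2)$; and since these two vectors are equal and $C_u(2)\cap C_s(2)=\{0\}$ (which is the transversality built into $\mathfrak C_\alpha$), both vanish, forcing $x_u=x_s=0$, i.e.\ $p=q$. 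Crucially, the \emph{one} identity $w_u-x_u=x_s-w_s$ already encodes both the ``$u$-estimate'' and the ``$s$-estimate'': you never need to re-expand in the stable direction. Your reduction to the scalar $|P_u(y-x)|$ throws away the fact that the stable and unstable discrepancies must literally coincide as vectors in transverse cones, and that is what forces you to go hunting for a dual inequality.

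That dual step, as you sketch it, does not go through. A hexagon (in the sense of Lemma~\ref{toutou}) need not admit an L-shaped two-leg staircase between two given points — that is precisely what distinguishes hexagons from rectangles — so the ``minimal L-shaped staircase in $f^n(R_n)$'' you invoke may simply not exist. And even when a two-leg staircase does exist, its leg lengths $\ell_u,\ell_s$ only satisfy $|P_u(y-x)|\in[\cos(\alpha/2)\ell_u-\cos(\alpha)\ell_s,\ \ell_u+\cos(\alpha)\ell_s]$, so they are not comparable to the projections unless you already know one of $\ell_u,\ell_s$ dominates the other, which is circular. (One can also check that re-running the $P_s$-projection on the image staircase $f^n\gamma$ gives only an upper bound $|P_s(y-x)|\leq L_s''+\cos\alpha\,L_u''$ with $L_u''$ possibly enormous, so the reversed-structure computation does not close either.) The fix is simply to keep the vectorial decomposition: replace your scalar $P_u$-bookkeeping by the identity $\epsilon_u(w_u-v_u)=\epsilon_s(v_s-w_s)$ for the integrated unstable/stable pieces of $\gamma$ and $f^n\gamma$, and conclude via $C_u(2)\cap C_s(2)=\{0\}$ in a single stroke, as in the paper. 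Your final limiting argument for generalized hexagons is fine and matches the paper's.
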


\begin{proof}
Let us first consider  a $(C,\delta,\alpha,n)$-hyperbolic hexagon $R_n$  with respect to $(C_u,C_s)\in \mathfrak{C}_\alpha$. Assume by contradiction that $R_n$ contains two $n$-periodic points $p$ and $q$. We can assume (by exchanging $e_u$ with $-e_u$ or/and $e_s$ with $-e_s$) that there is  a $C^1$ regular curve $\lambda:[0,1]\rightarrow M$ with $\lambda(0)=p$ and $\lambda(1)=q$  such that $\lambda'(t)\in C(e_s,e_u)$ for all $t\in [0,1]$.  Thus one can write   $\lambda'(t)$ as $\lambda'_u(t)+\lambda'_s(t)$ with $\lambda'_u(t)\in \mathbb{R}^+e_u$ and $\lambda'_s(t)\in \mathbb{R}^+e_s$.  Also for $\gamma=u,s$ we let $x_{\gamma}:=\int_{[0,1]} \lambda'_{\gamma}(t) dt\in C_{\gamma}$  and  $w_{\gamma}:=\int_{[0,1]} T_{\lambda(t)} f^n\left(\lambda'_{\gamma}(t)\right) dt\in C_{\gamma}$. For some positive constant $C(\alpha)$ depending only on $\alpha$ we have (we write $\| \cdot \|$ for $\| \cdot \|_0=\| \cdot \|_n$ to simplify the notations):   \begin{eqnarray*}
\|w_u\|&\geq & C(\alpha)\int_{[0,1]} \|T_{\lambda(t)} f^n\left(\lambda'_{u}(t)\right)\| dt,\\
& \geq &  CC(\alpha)e^{n\delta}\int_{[0,1]}  \|\lambda'_{u}(t)\| dt,\\
&\geq & CC(\alpha)e^{n\delta}\|x_u\|,
\end{eqnarray*}
and we get similarly
$$\|w_s\|\leq C^{-1}C^{-1}(\alpha)e^{-n\delta}\|x_s\|.$$ Thus for $n$ large enough  $w_u-x_u\in C_u(2)$ and $w_s-x_s\in C_s(2)$. This contradicts the fact that $p-q=x_u+x_s=w_u+w_s$ and the transversality of the cones $C_u(2), C_s(2)$.
Finally if $\tilde p$ and $\tilde q$ are $n$-periodic points of a generalized $n$-hyperbolic hexagon $\tilde {R}_n$, which is the Hausdorff limit of $\left(\overline{R_n^k}\right)_k$ where $(R_n^k)_k$ is a sequence of $(C,\delta,\alpha,n)$-hyperbolic hexagons. Then $p-q=x_u+x_s\simeq w_u+w_s$
 for any $p,q$ in $R_n^k$ (with some large $k$) respectively  close to $\tilde{p}$ and $\tilde q$. It leads  to the same contradiction. This concludes the proof of the lemma.
\end{proof}

\begin{rem}\label{touto}
 In the above proof  we used in a essential way  that  the  $n$-expanding and $n$-contradicting fields, $e_u$ and $e_s$,  and their image under $Tf^n$ lie  in the same bicone $(C_u,C_s)$.  It is not enough to assume their image lies  in another bicone of $\mathfrak{C}_\alpha$.
\end{rem}

\section{Semi-algebraic tools}
We now recall some results of semi-algebraic geometry which will be used in the next section to build a collection of $n$-hyperbolic hexagons covering a given $n$-dynamical ball. We refer to \cite{BR} for an introduction to semi-algebraic geometry.\\

\subsection{Nash maps and their degree}

A semi-algebraic set of $\mathbb{R}^d$ is a set which may be written as a finite union of polynomial inequalities.
Semi-algebraic sets are finite union of real analytic manifolds (also called Nash manifolds). A map $f:A\subset \mathbb{R}^d\rightarrow \mathbb{R}^e$ is called semi-algebraic when its graph $\Gamma_f:=\{(x,f(x)), \ x\in A\}\subset \mathbb{R}^{d+e}$ is semi-algebraic. A real analytic semi-algebraic map  on a Nash manifold is called a Nash map. Here we only consider Nash maps defined on cubes $]0,1[^d$ for $d\in \mathbb{N}$ (by convention we let $]0,1[^0=\{0\}$). \\

The algebraic complexity of a Nash map may be quantified as follows.  The complexity $\com(f)$ of a   Nash map $f:]0,1[^d\rightarrow \mathbb{R}^e$  is the minimal integer $n$ for which we have 
$$\Gamma_f=\bigcup_{i=1,...,n}\bigcap_{j=1,...,n}\{P_{i,k} \ ?_{i,j} \ 0 \}$$
 for  $?_{i,j}\in\{>,=\}$ and for polynomials $P_{i,j}\in \mathbb{R}[X_1,...,X_{d+e}]$ with total degree\footnote{The total degree of a monomial $\prod_l X_l^{\beta_l}$ is the sum $\sum_l\beta_l$ and the total degree of a polynomial is defined to be the largest total degree of its monomials.} less than or equal to $n$.  
 
 In the present paper we will work with another notion. The degree $\deg(f)$ of a real Nash map $f:]0,1[^d\rightarrow \mathbb{R}$ is the minimal total degree of the vanishing polynomials of $f$, i.e. polynomials $P\in \mathbb{R}[X_1,...,X_d,X_{d+1}]\setminus \{0\}$ with $P(x,f(x))=0$ for any $x\in ]0,1[^d$. The map $f$ being real analytic  we may assume the polynomial $P$ to be irreducible.  When $f$ is a polynomial map defined by a polynomial   $F\in \mathbb{R}[X_1,...,X_{d}]$ the degree $\deg(f)$ of $f$ is then equal to the (usual) total degree $\deg_t(F)$ of $F$. For a Nash map $\phi=(\phi_1,...,\phi_e):]0,1[^d\rightarrow \mathbb{R}^e$ we let the degree of $\phi$ be the maximal  degree of its components $\phi_1,...,\phi_e$.
 
 In general the graph of a real Nash map may not be written as the zero locus of a vanishing polynomials. However , the degree and the complexity of a Nash map are equivalent in the following sense (Proposition 4 in \cite{cos}) : there is a function $a=a_{d,e}:\mathbb{N}\rightarrow \mathbb{N}$ such that for any Nash map  $f:]0,1[^d\rightarrow \mathbb{R}^e$ we have 
 \begin{eqnarray}\label{equi}
 \deg(f)\leq a(\com(f)) \text{ and } \com(f)\leq a(\deg(f)).\\ \nonumber
 \end{eqnarray}
 


\subsection{Yomdin-Gromov Lemma}\label{yy}
A fundamental tool in the Reparametrization lemma of dynamical balls by   hyperbolic hexagons  is the following powerful lemma due to Yomdin and Gromov \cite{Yom}\cite{Gr}. We recall the functional version of this lemma in dimension 3 (we later apply this lemma in local charts of the unit tangent bundle of a compact surface). 

\begin{defi}A  map $\phi=(\phi_1,...,\phi_d) :]0,1[^d \rightarrow \mathbb{R}^d$ is said triangular when the $i^{th}$-component $\phi_i$ of $\phi$ only depends on the $i^{th}$-first coordinates, i.e. for all $(x_1,...,x_d)\in]0,1[^d$ we have$$\phi(x_1,...,x_d)=(\phi_1(x_1),\phi_2(x_1,x_2),...,\phi_d(x_1,...,x_d)).$$
\end{defi} Observe that triangular  maps are stable under  composition (when the composition  is well defined). \\

When $r$ is an integer the $C^r$ norm $\|\psi\|_r$ of a $C^r$ map $\psi:U\rightarrow \mathbb{R}^e$ for an open set $U\subset \mathbb{R}^d$ is defined as follows:
$$\|\psi\|_r:=\max_{k=0,1...,r}\|\psi^{(k)}\|_\infty$$
$$ \text{ with }  \|\psi^{(k)}\|_\infty=\max_{\alpha\in \mathbb{N}^d, \ |\alpha|=k}\sup_{x\in U}\|\partial^\alpha \psi (x)\| \text{ for } k\in \mathbb{N}.$$

\begin{theo}\label{firs}\cite{Gr} (see also \cite{BI}\cite{PI})
Let $r$ be a positive integer and let $f:]0,1[^3\rightarrow \mathbb{R}^4$ be a  Nash map. Then there is a family  $\mathcal{F}$ of Nash maps $\phi=(\phi_1,\phi_2,\phi_3):]0,1[^{d_\phi}\rightarrow ]0,1[^3$ with $0\leq d_\phi\leq 3$ such that:

\begin{itemize}
\item each $\phi\in \mathcal{F}$ with $d_\phi=3$  is  triangular and is a diffeomorphism onto its image,
\item $f^{-1}(]-1,1[^4)\subset \bigcup_{\phi\in \mathcal{F}}\phi(]0,1[^{d_\phi})$,
\item $\|\phi\|_r,\|f\circ \phi\|_r\leq 1$ for any $\phi\in \mathcal{F}$,
\item $\sharp \mathcal{F}$ and $\max_{\phi \in \mathcal{F}}\deg(\phi)$ is bounded from above by a constant which depends only on $\deg(f)$ and $r$.
\end{itemize}
\end{theo}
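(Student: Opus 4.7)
The plan is to prove the lemma by induction on the dimension of the domain, combining two semi-algebraic ingredients: cylindrical algebraic decomposition, which yields the triangular structure, and a one-variable subdivision scheme, which brings the $C^r$-norms under control. Throughout, all degree bounds are tracked via the equivalence (\ref{equi}) between $\deg$ and $\com$ of Nash maps, so that the final count of charts depends only on $\deg(f)$ and $r$.

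First I would apply a cylindrical algebraic decomposition of the semi-algebraic set $A:=f^{-1}(]-1,1[^4)\subset]0,1[^3$ relative to the coordinate projections $\mathbb{R}^3\rightarrow \mathbb{R}^2\rightarrow\mathbb{R}$. This partitions $A$ into finitely many Nash cells, each of which, after pulling back by an affine map from $]0,1[^{d_\phi}$, is the graph or open band of Nash functions of the preceding coordinates. The resulting parametrizations $\phi_0:]0,1[^{d_\phi}\rightarrow]0,1[^3$ are automatically triangular diffeomorphisms onto their images when $d_\phi=3$, and the number and degrees of the cells are bounded by a function of $\deg(f)$. This handles the first two bullets of the statement and also accounts for the lower-dimensional strata where $d_\phi<3$.

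The crux is then to refine each triangular $\phi_0$ into a family of triangular maps $\phi$ satisfying $\|\phi\|_r\leq 1$ and $\|f\circ\phi\|_r\leq 1$, without losing control on the cardinality or the degrees. The inductive step is a one-variable reparametrization lemma: given a Nash function $g:]0,1[\rightarrow ]-1,1[$ of bounded degree, each derivative $g^{(k)}$ is itself algebraic of bounded degree and therefore has $O_{\deg(g)}(1)$ zeros. Splitting $]0,1[$ at the zeros of $g',\ldots,g^{(r)}$ produces $O_{\deg(g),r}(1)$ open subintervals on which all derivatives up to order $r$ are monotone. On such an interval $]a,b[$, an affine reparametrization $t\mapsto a+(b-a)t$, possibly pre-composed with a power map $t\mapsto t^s$ to absorb derivative blow-up near an endpoint, produces a Nash function whose derivatives of order $\leq r$ are bounded by $1$. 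I would then lift this to the triangular higher-dimensional setting by iterating coordinate by coordinate: first reparametrize in $x_1$; then view $\phi_2(x_1,\cdot)$ as a Nash function of $x_2$ with semi-algebraic dependence on $x_1$ of controlled degree and apply the one-variable lemma fiberwise; finally repeat in $x_3$.

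The main obstacle is keeping the cardinality of subcharts uniformly bounded when the subdivision points depend semi-algebraically on the previous coordinates. One has to show that the set $\{(x_1,\ldots,x_{k-1},x_k):\partial_{x_k}^j\phi_k=0\}$ admits a cylindrical decomposition whose complexity is again polynomial in $\deg(\phi_0)$; a second cylindrical decomposition then absorbs this dependence and produces finitely many triangular subdomains on each of which the one-variable scheme applies with uniform constants. Applying this procedure simultaneously to all derivatives of $\phi$ and of $f\circ\phi$ up to order $r$ (legitimate since $\deg(f\circ\phi)$ is controlled by $\deg(f)\deg(\phi)$, and triangular reparametrizations preserve both Nash-ness and triangularity) produces the desired family $\mathcal{F}$ with all the required bounds.
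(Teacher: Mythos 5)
Note first that the paper does not prove Theorem~\ref{firs}: it is quoted as a known result from Gromov \cite{Gr}, with complete proofs in \cite{BI} and \cite{PI}. So your attempt cannot be compared against a proof internal to this paper, only against the cited literature. Your strategy -- a cylindrical decomposition to obtain the triangular structure, followed by a degree-tracked one-variable subdivision and non-affine (power-map) reparametrization to control $C^r$ norms, lifted inductively coordinate by coordinate -- is indeed the broad outline of the arguments in \cite{Gr}, \cite{BI} and \cite{PI}, so the approach is the right one.

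However, the crux of the one-variable step is glossed over. Splitting $]0,1[$ at the zeros of $g',\dots,g^{(r)}$ gives intervals on which each derivative is monotone, but monotonicity alone says nothing about the size of $g^{(k)}$: you still need the genuinely Yomdin-style recursion that controls the derivatives one order at a time. Concretely, one assumes inductively $\|g\|_{C^{k-1}}\le 1$ and uses the fact that $g^{(k-1)}$ then has total variation $\le 2$ together with a degree-bounded count of the intervals where $|g^{(k)}|$ exceeds a threshold; an affine shrinking and, where an algebraic singularity occurs, a power substitution $t\mapsto t^s$ with $s$ controlled by $\deg(g)$, then brings $\|g\|_{C^k}$ below $1$ without destroying the lower-order bounds. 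Your phrase ``possibly pre-composed with a power map to absorb derivative blow-up'' points at this but does not supply the actual mechanism, and without it the step ``derivatives monotone $\Rightarrow$ $C^r$ norm controlled after affine reparametrization'' is simply false. The second serious gap is the higher-dimensional lifting: you correctly identify that the subdivision points depend semi-algebraically on the previous coordinates, but the statement that ``a second cylindrical decomposition absorbs this dependence'' with uniformly bounded complexity is exactly the technical heart of the theorem and is what makes the proofs in \cite{BI} and \cite{PI} non-trivial; it cannot be dispatched in a sentence, in part because one must re-reparametrize in $x_1$ after the $x_2$-subdivision in order to keep the derivatives of the subdivision endpoints under control. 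As a road map your proposal is sound and faithful to the known proofs, but as written it would not compile into a complete argument.
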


The key point in the above statement lies in the last property : the number of reparametrizations used to control the derivatives of $f$ depends only on the degree of $f$, not on its $C^r$ norm $\| \cdot \|_r$. This lemma was introduced by Yomdin and Gromov \cite{Yom} \cite{Gr} to bound the local dynamical complexity of $C^r$ maps through Taylor-Lagrange polynomial interpolation. More recently
 applications in diophantine geometry were discovered by Pila and Wilkie (see the survey \cite{yomm}).

\begin{rem}\label{pr}
The reparametrization maps $\phi$ are  uniformly continuous (in fact $1-$Lipschitz) and therefore extend continuously on the closed cube $[0,1]^{d_\phi}$. Thus we may assume $d_\phi=3$ for all $\phi$ in the above statement if one replaces
the second item by $$f^{-1}(]-1,1[^4)\subset \bigcup_{\phi\in \mathcal{F}}\phi([0,1]^3).$$
\end{rem}

\begin{rem}
In \cite{Gr} (also in \cite{Bl}) the authors have worked with  the complexity and not the degree, but this is irrelevant as these notions are equivalent by  (\ref{equi}).
\end{rem}


\subsection{Degree of a composition}\label{62}
For the purpose of the next section we need to control dynamically the degree of semi-algebraic maps. Using elimination the following rule of composition for the degree was proved in \cite{b}.

\begin{lemma}\label{res}
Let $\phi_0:]0,1[^e\rightarrow ]0,1[$ and
$\phi_1,...,\phi_e:]0,1[^d\rightarrow ]0,1[$ be Nash maps then
$$\deg\left(\phi_0(\phi_1,\phi_2,...,\phi_e)\right)\leq \prod_{i=0,1,2,...,e} \deg(\phi_i).$$
\end{lemma}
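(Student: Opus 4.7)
The plan is to build an explicit vanishing polynomial for the composition $\psi:=\phi_0\circ(\phi_1,\ldots,\phi_e)$ by iterated elimination. First I would fix irreducible vanishing polynomials realizing the degrees: a $P_0(Y_1,\ldots,Y_e,Z)$ of total degree $d_0:=\deg(\phi_0)$ with $P_0(\phi_1(x),\ldots,\phi_e(x),\psi(x))\equiv 0$, and for each $i=1,\ldots,e$ a $P_i(X_1,\ldots,X_d,Y_i)$ of total degree $d_i:=\deg(\phi_i)$ with $P_i(x,\phi_i(x))\equiv 0$. Crucially, $P_i$ only involves the variables $X$ and $Y_i$.

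Next I would eliminate the intermediate variables one at a time by resultants. Set $R_0:=P_0$, and for $i=1,\ldots,e$ let
$$R_i:=\Res_{Y_i}\bigl(R_{i-1},P_i\bigr)\in\mathbb{R}[X_1,\ldots,X_d,Y_{i+1},\ldots,Y_e,Z],$$
viewing both arguments as polynomials in $Y_i$ over the remaining variables. By induction the value $Y_i=\phi_i(x)$ is a common root of the two polynomials once the other variables are specialised to $(x,\phi_{i+1}(x),\ldots,\phi_e(x),\psi(x))$, so the fundamental property of the resultant gives $R_i$ vanishing at this specialisation. After the $e$-th step, $R_e(x,\psi(x))\equiv 0$ is then a vanishing polynomial of $\psi$.

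The degree estimate follows from the classical total-degree bound for resultants: if $F,G\in k[\mathbf{X},Y]$ have total degrees $p$ and $q$ then $\Res_Y(F,G)$ has total degree at most $pq$ in $\mathbf{X}$. Iterating, $\deg_t(R_i)\leq d_0d_1\cdots d_i$, and therefore $\deg(\psi)\leq\deg_t(R_e)\leq\prod_{i=0}^e d_i$, as desired.

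The step I expect to be the main obstacle is verifying that $R_e\not\equiv 0$, which is the reason one must start from \emph{irreducible} vanishing polynomials. The resultant can vanish identically only if $R_{i-1}$ and $P_i$ share a non-trivial common factor in $\mathbb{R}(X,Y_{i+1},\ldots,Y_e,Z)[Y_i]$, or if both $Y_i$-leading coefficients vanish. The first possibility is excluded by the irreducibility of $P_i$ combined with the fact that $P_i$ depends only on $(X,Y_i)$ while $R_{i-1}$ genuinely involves the additional variables $Y_{i+1},\ldots,Y_e,Z$, so any common factor would have to lie in $\mathbb{R}[X,Y_i]$ and then would be an associate of $P_i$---contradicting the (inductively maintained) fact that $R_{i-1}$ does not vanish along the graph of $(\phi_1,\ldots,\phi_e,\psi)$ modulo $P_i$. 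The degenerate leading-coefficient case can be absorbed by factoring out any pure $\mathbb{R}[X,Y_i]$-divisor of $R_{i-1}$ before taking the resultant, which only decreases the degree. Handling this standard-but-delicate bookkeeping is the only real work.
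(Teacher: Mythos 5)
Your proposal is correct and follows essentially the same route as the paper: iterated elimination of the intermediate variables $Y_1,\ldots,Y_e$ by resultants against irreducible vanishing polynomials, with the degree estimate coming from the classical total-degree bound on resultants, and irreducibility of the $P_i$ invoked to guarantee the successive resultants are nonzero. The only cosmetic difference is the indexing (your $R_i$ is the paper's $Q_i$) and the fact that you elaborate, more cautiously than the paper's one-line assertion, on the mechanism by which the resultants could conceivably vanish and how to repair it.
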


The lemma is obviously satisfied for polynomial maps $(\phi_i)_i$. In fact we have in this case
$\deg\left(\phi_0(\phi_1,\phi_2,...,\phi_e)\right)\leq  \deg(\phi_0)\max_{i=1,2,...,e} \deg(\phi_i)$.\\

We recall some basic facts of elimination theory used in the proof of Lemma \ref{res}. Let $A$ be a factorial ring and $P,Q\in A[X]$.
The resultant $\Res_X(P,Q)\in A$, which  is the determinant of the Sylvester matrix of $P$ and $Q$, satisfies the following properties (see \cite{Lang}):
\begin{itemize}
\item $\Res_X(P,Q)=0$ if and only if $P$ and $Q$ have a non constant common factor in $A[X]$,
\item  $\Res_X(P,Q)$ belongs to the ideal $\langle P,Q\rangle$ generated by $P$ and $Q$.
\end{itemize}

When $A=\mathbb{R}[X_1,...,X_n]$ the resultant $\Res_X(P,Q)$ is a polynomial in $X_1,...,X_n$ and the total degree of $\Res_X(P,Q)\in \mathbb{R}[X_1,...,X_n]$ is bounded from above by the product of the total degrees of $P$ and $Q$,  seen as polynomials in the $n+1$ variables $X_1,...,X_n,X$ (Theorem 10.9 in \cite{thes}). In this  case we note also that  any common root $(x_1,...,x_n,x)\in \mathbb{R}^{n+1}$ of $P$ and $Q$ satisfies,
$$\Res_{X}(P,Q)(x_1,...,x_n)=0.$$

We go back now to the proof of Lemma \ref{res}.
\begin{proof}
For $i=0,1,2,..., e$ we let $P_i$ be a vanishing irreducible polynomial of $\psi_i$ with minimal total degree.
We eliminate the variable $Y_1$ in

$$\left\{\begin{array}{rl}P_0(Y_1,...,Y_e,Y)&=0,\\
P_1(X_1,...,X_d,Y_1)&=0.
\end{array}\right.$$

Observe that for all $(x_1,,...,x_d,y_2,...,y_e)\in [0,1]^{d-1+e}$ we have
$$\left\{\begin{array}{rl}P_0\left((\phi_1(x_1,...,x_d),y_2,...,y_e,\phi_0\left(\phi_1(x_1,...,x_d),y_2,...,y_e\right)\right)&=0,\\
P_1\left(x_1,...,x_d,\phi_1(x_1,...,x_d)\right)&=0,
\end{array}\right.$$
and therefore the polynomials $P_0$ and $P_1$ as elements of $\mathbb{R}[X_1,...,X_d,Y_1,...,Y_e,Y]$ have  the common root
$$\left(x_1,...,x_d,\phi_1(x_1,...,x_d),y_2,...,y_e, \phi_0(\phi_1(x_1,...,x_d),y_2,...,y_e\right).$$ In particular the resultant in $Y_1$ of $P_0$ and $P_1$, say $Q_1:=\Res_{Y_1}(P_0,P_1)$ in\\ $\mathbb{R}[X_1,...,X_d,Y_2,...,Y_e,Y]$,  vanishes at $\left(x_1,...,x_d,y_2,...,y_e, \phi_0\left(\phi_1(x_1,...,x_d),y_2,...,y_e\right)\right)$ for all $(x_1,...,x_d,y_2,...,e_d)\in ]0,1[^{d-1+e}$. Moreover we have $\deg_t(Q_1)\leq \deg_t(P_0)\deg_t(P_1)$. This proves the lemma for $e=1$ because in this case $Q_1$ is a vanishing polynomial of $\phi_0\circ \phi_1$.

For larger integers $e$ we eliminate by induction on $1\leq k< e$  the variable $Y_{k+1}$ in $$\left\{\begin{array}{rl}Q_{k}(X_1,...,X_d,Y_{k+1},...,Y_e,Y)&=0,\\
P_k(X_1,...,X_d,Y_{k+1})&=0.
\end{array}\right.$$
with $Q_{k}=\Res_{Y_{k}}(P_k,Q_{k-1})\in \mathbb{R}[X_1,...,X_d,Y_{k+1},...,Y_e,Y]$. As $(P_i)_i$ are irreducible,
the polynomials $Q_{k}$ are not zero. Also $Q_e=\Res_{Y_{e}}(P_e,Q_{e-1})$ is a vanishing polynomial of $\psi_0\left(\psi_1,\psi_2,...,\psi_d\right)$.
Finally we have $\deg_t(Q_e)\leq \prod_{i=0,1,..,e} \deg_t(P_i)= \prod_{i=0,1,...,e} \deg(\psi_i)$.
\end{proof}

Then we get by an immediate induction that the algebraic degree grows at most exponentially in
$n$ after $n$-compositions. This estimate on the dynamical degree will be crucial to control the number of hexagons covering a given dynamical ball (Lemma \ref{tec}).
\begin{cor}\label{compo}
Let $\psi_1,\psi_2,...,\psi_n:]0,1[^d\rightarrow ]0,1[^d$ be Nash maps then
$$\deg(\psi_n\circ...\circ \psi_2\circ \psi_1)\leq \deg(\psi_n\circ...\circ \psi_2) \deg(\psi_1)^d  \leq \deg(\psi_n)\prod_{i=1,...,n-1}\deg(\psi_i)^d.$$
\end{cor}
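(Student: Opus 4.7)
The plan is to deduce the first inequality from Lemma \ref{res} applied component by component, and then to obtain the second inequality by an immediate induction on $n$.

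For the first inequality, I would set $\Psi := \psi_n \circ \cdots \circ \psi_2$ and write $\Psi = (\Psi_1, \ldots, \Psi_d)$, where each $\Psi_j : ]0,1[^d \to ]0,1[$ is a scalar Nash map; similarly write $\psi_1 = (\psi_1^{(1)}, \ldots, \psi_1^{(d)})$. Each coordinate of the composition then reads $(\Psi \circ \psi_1)_j = \Psi_j(\psi_1^{(1)}, \ldots, \psi_1^{(d)})$, so Lemma \ref{res} applied with $\phi_0 = \Psi_j$, $e = d$, and $\phi_k = \psi_1^{(k)}$ gives
$$\deg\bigl((\Psi \circ \psi_1)_j\bigr) \leq \deg(\Psi_j) \prod_{k=1}^d \deg(\psi_1^{(k)}) \leq \deg(\Psi) \cdot \deg(\psi_1)^d,$$
where the last step uses the convention from Subsection 6.1 that the degree of a vector-valued Nash map is the maximum of the degrees of its components. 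Taking the max over $j$ on the left yields $\deg(\Psi \circ \psi_1) \leq \deg(\Psi) \cdot \deg(\psi_1)^d$, which is the first inequality of the statement.

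For the second inequality I would induct on $n$. The case $n = 1$ is trivial. For the inductive step, I would apply the induction hypothesis to the $n - 1$ self-maps $\psi_2, \ldots, \psi_n$ of $]0,1[^d$ to obtain $\deg(\psi_n \circ \cdots \circ \psi_2) \leq \deg(\psi_n) \prod_{i=2}^{n-1} \deg(\psi_i)^d$; substituting this into the first inequality then gives the claimed bound $\deg(\psi_n) \prod_{i=1}^{n-1} \deg(\psi_i)^d$.

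There is no substantive obstacle here: the content is entirely contained in Lemma \ref{res}, and the only point to verify is that taking the max over components of a vector-valued Nash map interacts correctly with the compositional bound, which it does tautologically. The dimension $d$ enters only as the number of scalar arguments fed into each component of the outer map at each stage of the iteration, and hence appears as the exponent of the inner maps.
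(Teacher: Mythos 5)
Your proof is correct and matches the paper's intent: the paper itself only says this follows ``by an immediate induction'' from Lemma \ref{res}, and your componentwise application of that lemma (with $\phi_0 = \Psi_j$, $e=d$, $\phi_k = \psi_1^{(k)}$) followed by the induction on $n$ is exactly the intended argument. The one point worth making explicit, which you did, is that the degree of a vector-valued Nash map is defined as the maximum of the component degrees, so the bound $\deg(\Psi_j)\prod_k\deg(\psi_1^{(k)})\leq\deg(\Psi)\deg(\psi_1)^d$ and the passage to the max over $j$ are both legitimate.
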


This result first appears in \cite{b} to  compute the dimensional entropies of a product.

\subsection{Covering  semialgebraic hyperbolic sets by  hexagons}
We cover now an $n$-hyperbolic semi-algebraic set  by a collection of $n$-hyperbolic hexagons    with cardinality bounded by a function depending only on the algebraic complexity of this set. Let us specify the precise form of the semi-algebraic sets that we will consider.

Fix a pair $(C_u,C_s)$ of  cones in $E_0$ belonging to $ \mathfrak{C}_\alpha$. Then we may choose an isometry between $(E_0,\| \cdot \|_0)$ and $(\mathbb{R}^2,\| \cdot \|)$ so that the center 
 of $C_u$ is mapped to $\{0\} \times \mathbb{R}$.  Using the notations of Subsection \ref{ffh} we consider a $n$-hyperbolic set $U_n$ with respect to $\mathcal{F}$ and $(C_u,C_s)$ of the following form  : there are Nash maps $\phi : ]0,1[^2\rightarrow B$ and $\psi:]0,1[^3\rightarrow \mathbb{T}$ (with $B$ and $\mathbb{T}$ being respectively the Euclidean unit ball and unit sphere of $\mathbb{R}^2$) such that:

\begin{itemize}
\item $\phi$ is a diffeomorphism onto its image and with $\underline{U_n}$ denoting  the isometric image of $U_n$ in $\mathbb{R}^2$ we have 
 $$\underline{U_n}=\phi(]0,1[^2),$$
\item the $n$-expanding vector field $\underline{e_u}$  on $\underline{U_n}$ is given by 
$$\underline{e_u}( \phi(t,s)) = \underline{v_u}=(0,1) \text{ for all }(t,s)\in ]0,1[^2,$$
\item the $n$-contracting vector field $\underline{e_s}$ on $\underline{U_n}$ is given by 
$$\underline{e_s}( \phi(t,s)) =  \psi(t,s,0) \text{ for all }(t,s)\in ]0,1[^2.$$
\end{itemize}

We recall the  $n$-expanding and $n$-contradicting  vector fields given by the $n$-hyperbolic structure on $U_n$ (cf. Definition \ref{dsd} ) do not correspond to the Oseledets distribution.  

\begin{lemma}\label{d}
Let $(U_n,\phi,\psi)$ be as above. Then  there is  a collection $\mathcal{H}$ of  generalized $n$-hyperbolic hexagons such that 
$$U_n\subset \bigcup_{H_n\in \mathcal{H}}H_n\text{ and }$$

$$\sharp \mathcal{H}\leq P\left(\deg(\phi), \deg(\psi)\right),$$ 
for a universal polynomial $P$.
\end{lemma}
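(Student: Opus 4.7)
The plan is to mimic the one-dimensional strategy of Theorem \ref{mainn}: use the Yomdin-Gromov reparametrization theorem together with a cylindrical semi-algebraic decomposition to produce a partition of $U_n$ whose pieces are forced to be hexagons by the transversality of the bicone $(C_u,C_s)\in\mathfrak{C}_\alpha$. First I would pass to the $\phi$-chart, so the task reduces to covering $]0,1[^2$ by open cells whose $\phi$-images are (generalized) hexagons with respect to the fields $e_u=(0,1)$ and $e_s=\psi(\cdot,\cdot,0)$. The transversality assumption gives $\angle(\psi(t,s,0),(0,1))\in\,]\alpha,\pi-\alpha[$, hence the first component of $\psi(\cdot,\cdot,0)$ is uniformly $\geq \sin\alpha$ in absolute value, and up to the sign of $e_s$ every integral curve of $e_s$ is a graph over the first axis.

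Next I would apply Theorem \ref{firs} to the Nash map $(\phi,\psi):]0,1[^3\to\mathbb{R}^4$ (or, equivalently, use a cylindrical algebraic decomposition adapted to the defining polynomials of $\phi$ and $\psi$) to produce a family of triangular Nash reparametrizations $\chi:]0,1[^{d_\chi}\to]0,1[^2$, of cardinality bounded by a polynomial $P_0(\deg(\phi),\deg(\psi))$, whose images cover $]0,1[^2$ and on each of which both $\phi\circ\chi$ and $\psi\circ\chi$ are simple Nash maps; by Corollary \ref{compo} their degrees remain polynomially controlled. I would then refine each cell by cutting at the (finitely many, Bezout-bounded) critical points of its boundary functions and of the slope function of $e_s$, so that on every refined cell both boundary functions and the slope of $e_s$ are monotone on a single interval.

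On each refined cell, the image $\phi\circ\chi(]0,1[^{d_\chi})$ has the shape described in Lemma \ref{toutou}, transported by $\phi$ from the vertical/horizontal foliations to the foliations integrating $e_u$ and $e_s$: any two points are connected by a finite staircase of alternating $e_u$-segments and $e_s$-arcs, consistently oriented; equivalently, every pair of points admits a $C^1$ curve tangent everywhere to $C(e_u,e_s)$ or to $C(e_u,-e_s)$. Each such image is thus an $(C,\delta,\alpha,n)$-hyperbolic hexagon, or a generalized one after passing to the Hausdorff closure for degenerate cells. Summing over cells yields the required family $\mathcal{H}$ with $\sharp\mathcal{H}\leq P(\deg(\phi),\deg(\psi))$.

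The main obstacle is the geometric content of the third paragraph. A generic semi-algebraic cell is merely an open set bounded by two continuous graphs, but Definition \ref{dref} is a global cone condition — every connecting curve must remain tangent to $C(e_u,e_s)$ (resp.\ $C(e_u,-e_s)$) throughout — which is strictly stronger than the coordinate-wise condition of Lemma \ref{toutou} and than mere accessibility through $e_u$- and $e_s$-leaves. Arranging simultaneously the monotonicity of the boundary functions, of the slope of $e_s$, and of its interplay with $e_u$ so that at every point of the cell the relevant cone provides a directly accessible step — all the while keeping the total number of cuts bounded by a polynomial in $\deg(\phi)$ and $\deg(\psi)$ via Corollary \ref{compo} and Bezout-type estimates — is the core technical difficulty.
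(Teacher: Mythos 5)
Your proposal diverges substantially from the paper's argument, and the gap you flag at the end is real and is not patched; let me be concrete about where the approaches part ways and why your route, as stated, does not close.

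First, the Yomdin--Gromov theorem (Theorem \ref{firs}) is the wrong tool here. Its entire purpose is to produce reparametrizations with bounded $C^r$ norm, and no such bound is needed in Lemma \ref{d}; what one needs is a combinatorial decomposition of $U_n$ with cardinality controlled by the algebraic degree. The paper does not invoke Theorem \ref{firs} at all in this proof; the degree bound comes from one-variable Bezout counting. Replacing Yomdin--Gromov with a cylindrical algebraic decomposition (as you suggest in parentheses) is closer in spirit, but CAD cells are delimited by continuous semi-algebraic graphs, not by the flows of $e_u$ and $e_s$, so a CAD cell has no reason to be a hexagon for the $(e_u,e_s)$-foliations. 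You correctly identify this mismatch as ``the core technical difficulty'' --- but that difficulty \emph{is} the lemma, and the proposal gives no mechanism for resolving it.

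The paper's mechanism is different from anything in your sketch and has three interlocking ingredients you would need. (a) \emph{A bifoliation straightening chart.} Rather than working in the $\phi$-chart, where the pulled-back foliations are curved, the paper first restricts to $\phi(I_a)$ with $I_a=]a,1-a[^2$ (a compactness argument lets one recover $U_n$ by letting $a\to 0$ through Hausdorff limits of hexagons, which is precisely why ``generalized'' hexagons appear in the conclusion), extends $\underline{e_s}$ to a global smooth nonvanishing field $\underline{e_s^a}$ in the cone $C_s$, and takes a $C^\infty$ diffeomorphism $\Theta$ straightening $\underline{e_u}$ and $\underline{e_s^a}$ to vertical and horizontal lines. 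In the $\Theta$-chart, the hexagon condition reduces exactly to the elementary characterization of Lemma \ref{toutou}, which is what makes the combinatorics tractable. Note that $\Theta$ is only $C^\infty$, not Nash --- this is harmless because the algebraic counting happens upstream. (b) \emph{Singular boundary points.} The cuts are not at ``critical points of boundary functions and of the slope of $e_s$'' in the abstract: they are the corners $\phi(\ex(I_a))$ together with the isolated points of $\partial\phi(I_a)$ where $\underline{e_u}$ or $\underline{e_s}$ is tangent to the boundary curve. Each tangency is the zero of a one-variable wedge product which is Nash with degree polynomially bounded in $\deg(\phi)$ and $\deg(\psi)$ (via elimination theory, not Corollary \ref{compo}), hence has at most polynomially many zeros. (c) \emph{The monotone-slice decomposition.} Between consecutive $x$-coordinates (in the $\Theta$-chart) of singular points, the region $\Theta^{-1}\circ\phi(I_a)$ splits into slices bounded above and below by \emph{monotone} graphs, and the number of slices is controlled again by the number of singular points; this is Claim 2 of the paper and is exactly the step that converts a generic semi-algebraic cell into an hexagon. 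Your proposal has no analogue of (a), (b), or (c), and so, as you yourself observe, the step from ``CAD cell'' to ``hexagon'' remains unbridged.
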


\begin{proof}
We have $U_n=\phi(]0,1[^2)=\bigcup_{0<a<1/2}\phi(I_a)$ with $I_a=]a,1-a[^2$. Thus by compactness it is enough  to cover for any $0<a<1/2$ the set   $\phi(I_a)$ by  generalized $n$-hyperbolic hexagons as in  the lemma with a universal polynomial $P$ which does not depend on $a$. 

We fix once and for all  $0<a<1/2$. As $\phi:]0,1[^2\rightarrow \mathbb{R}^2$ is a diffeomorphism onto its image we 
may extend smoothly on the whole space $\mathbb{R}^2$ the restriction of $\underline{e_s}$ on $\phi(I_a)$ in 
such a way the extended $C^\infty$ field, denoted by  $\underline{e_s^a}$, does not vanish and  belongs also to the cone  $C_s$. The vector 
fields $\underline{e_s^a}$ and $\underline{e_u}=(0,1)$ being smooth non vanishing  vector  fields  on $\mathbb{R}^2$ lying in the transverse cones $C_s$ and $C_u$  there is a diffeomorphism $\Theta:[0,1]^2\rightarrow \mathbb{R}^2\supset \phi(]a,1-a[^2)$ 
which straightens simultaneously the foliations associated to $\underline{e_s^a}$ and $\underline{e_u}$, i.e. $
\Theta(\{t\}\times [0,1])$ and  $\Theta([0,1]\times \{s\} )$ are respectively  tangent to $\underline{e_u}$ and $\underline{e_s^a}$  for all $(t,s)\in [0,1]^2$. 

We let $\partial I_a$ (resp. $\ex(I_a)$) be  the boundary of $I_a$ given by the four edges of the corresponding square (resp. the extreme set of $I_a$ given by the four corners). A point $x$ in $\partial\phi(I_a)=\phi(\partial I_a)$ will be said \textit{singular} when :
\begin{itemize}
\item either $x\in \phi(\ex(I_a))$, 
\item or  $\underline{e_s}$ is tangent to $\partial\phi(I_a)$ at $x$, but $\underline{e_s}$ is transverse \footnote{As $\phi$ and $e_s$ are real analytic it is equivalent to say the image by $\phi$ of the edge of $I_a$ containing $\phi^{-1}(x)$ is not an integral curve of the vector field $e_s$.} to $\partial\phi(I_a)$  on $U\setminus \{x\}$ for a neighborhood $U$ of $x$,
\item or  $\underline{e_u}$ is tangent to $\partial\phi(I_a)$ at $x$, but $\underline{e_u}$ is transverse to $\partial\phi(I_a)$  on $U\setminus \{x\}$ for a neighborhood $U$ of $x$.
\end{itemize}

\begin{CClaim}\label{on}Let  $\mathcal{S}$ be the set of singular points. Then we have $$\sharp \mathcal{S}\leq P\left(\deg(\phi), \deg(\psi)\right),$$ for a universal polynomial $P$ (which does not depend on $a$)
\end{CClaim}

We consider the subset $\mathcal{S}_x$ of $[0,1]$ given by the $x$-coordinates of the singular points through the chart $\Theta$, i.e.
$$\mathcal{S}_x=\pi(\Theta^{-1}\mathcal{S}),$$
where $\pi:[0,1]^2\rightarrow [0,1]$ is the projection on the first coordinate. Let us reorder this set  as follows $$\mathcal{S}_x:=\{x_1<x_2<...<x_K\}.$$ 

\begin{CClaim}\label{tw}
For any $1\leq l <K$ there are monotone  maps $\eta_1^l< ... <\eta_{N_l}^l:]x_l,x_{l+1}[\rightarrow [0,1]$ with $ N_l\leq \sharp \mathcal{S}$ such that 
\begin{eqnarray*}\left(]x_l,x_{l+1}[\times [0,1]\right)\cap \Theta^{-1}\circ\phi(I_a)&:=\bigcup_{1\leq i< N_l}\{(x,y), & \ x\in ]x_l,x_{l+1}[, \\
& & \eta_1^{i+1}(x) < y< \eta_1^{i+1}(x)\}.
\end{eqnarray*}

\end{CClaim}

As already observed in Lemma \ref{toutou} the sets 
$E_l:=\{(x,y),  \ x\in ]x_l,x_{l+1}[, \ \eta_1^{i+1}(x) < y< \eta_1^{i+1}(x)\}$ are  hexagons for the horizontal and vertical foliations, and thus 
the sets $\Theta(E_l)$ are $n$-hyperbolic hexagons. Therefore Lemma \ref{d} 
will be proved once we have shown Claim \ref{on} and Claim \ref{tw}.

\begin{proof}[Proof of Claim \ref{on}]
For a singular point $x\notin \phi(\ex(I_a))$  either $\underline{e_u}$ or $\underline{e_s}$ is tangent to $\partial \phi(I_a)$ at $\Theta(x)$. 
Thus it is enough to prove that  the wedge products $(0,1)\wedge \partial_t\tilde \phi(t,i)$ and $\psi(t,i,0)\wedge\partial_t\tilde \phi(t,i)$  (resp. $(0,1)\wedge \partial_s\tilde \phi(i,s)$ and $\psi(i,s,0)\wedge\partial_s\tilde \phi(i,s)$) for $i\in  \{a,1-a\}$  vanishes either for all $t\in ]0,1[$ (resp. for all $s$) or for $N$ values of $t$ (resp. $s$) with $N\leq P(\deg(\phi), \deg(\psi))$ where $P$ is a universal polynomial (independent of $a$). But these maps  being  Nash on $]0,1[$ either they vanish everywhere or the number of their zeroes is less than or equal to their degree.  By using again elimination theory one easily checks that for any Nash maps $f,g: ]0,1[\rightarrow \mathbb{R}$ the degrees of $fg$, $f+g$ and $f'$ depends polynomially on the degree of $f$ and $g$, i.e. there are universal polynomials $Q\in \mathbb{R}[X,Y]$ and $R\in \mathbb{R}[X]$ with $\deg(fg), \deg(f+g)\leq Q(\deg(f),\deg(g))$ and $\deg(f')\leq R(\deg(f))$. Moreover for a Nash maps $h:]0,1[^2\rightarrow \mathbb{R}$ and for any $c\in ]0,1[$ the degree of $h_c:=h(c,.)$ is less than or equal to the degree of $h$. As the above wedge products are obtained by such operations on $\psi$ and the coordinates of $\phi$,  this concludes the proof of the lemma. 
\end{proof}

\begin{proof}[Proof of Claim \ref{tw}]
 \begin{figure}[!h]
\vspace{-0,5cm}
\includegraphics[scale=0.7]{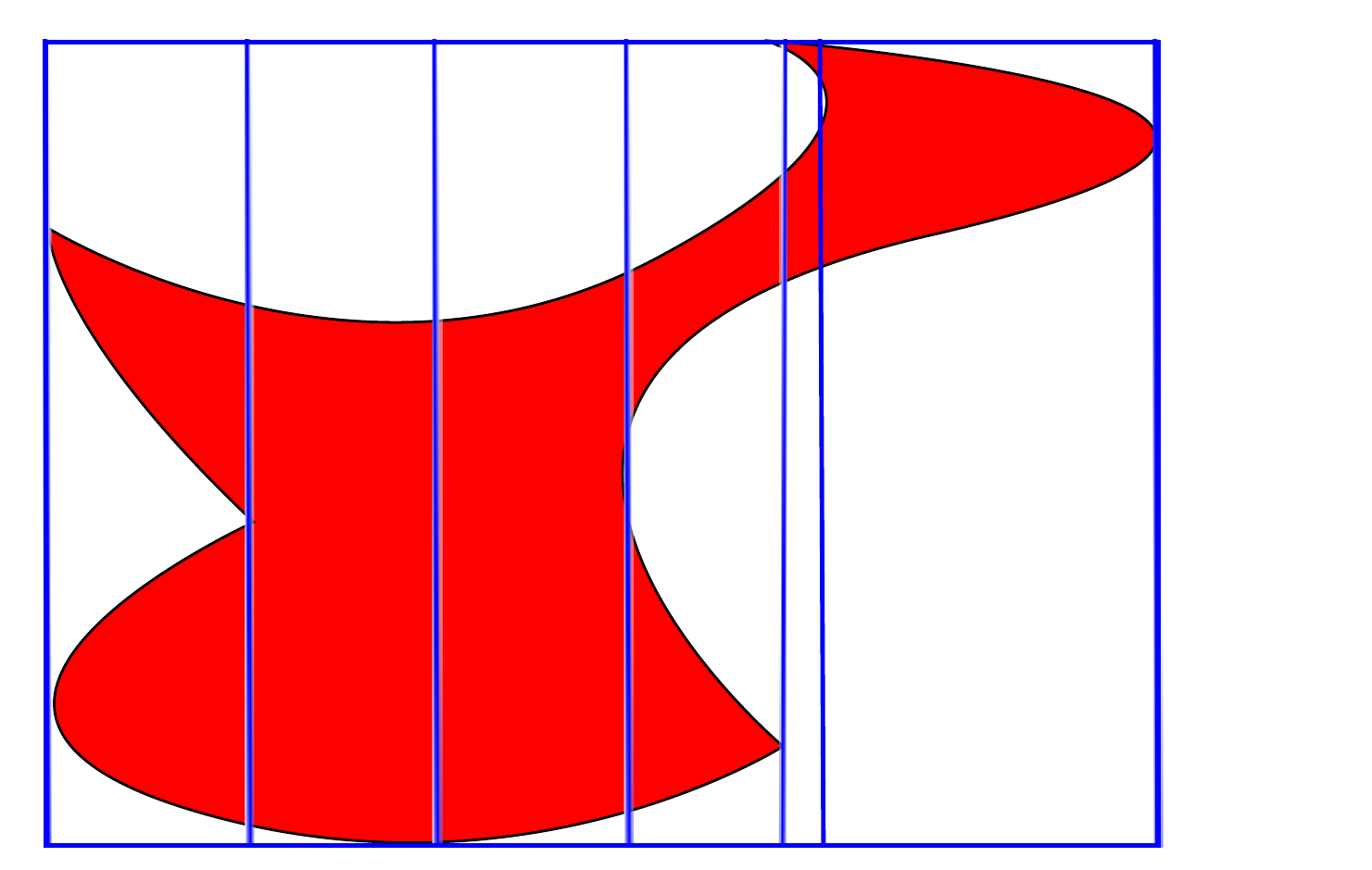}
\centering
\caption{\label{ffi}\textbf{The  $n$-hyperbolic  set $\phi(I_a)$ (in red) through a straightening chart of the $n$-contracting and $n$-expanding foliations.} The blue vertical lines represent the expanding leaves in the domain of the  chart at the singular points of  $\partial \phi(I_a)$.}
\end{figure}

It is easily checked from the definition of singular points that the set $\left(]x_l,x_{l+1}[\times [0,1]\right)\cap \phi(I_a)$ may be written as the union of "slices" of the form $\{(x,y),  \ x\in ]x_l,x_{l+1}[, \ \eta(x) < y< \zeta(x)\}$ with $\eta$ and $\zeta$ being monotone (cf. Figure \ref{ffi}). Finally let us  see why  the number of such slices is less than or equal to $\sharp \mathcal{S}$. The boundary of $\Theta^{-1}\circ\phi(I_a)$ is a Jordan curve $\gamma:[0,1]\rightarrow [0,1]^2$  given by four $C^\infty$ smooth Jordan arcs (corresponding to the image of the edges of $I_a$).  Moreover the graph of any $\eta_i^l$ (with $\eta_i^l$ as in the statement of  Claim
\ref{tw})  is a piece of $\gamma$, i.e. there is $0\leq a_{i,l}<b_{i,l}\leq 1$ such that  the graph of $\eta_i^l$ coincides with $\gamma(]a_{i,l},b_{i,l}[)$.   Finally notice that 
$\mathcal{S}\subset \bigcup_{i,j}\{\gamma(a_{i,l}),\gamma(b_{i,l}) \}$. Let  $i\neq  j$. We may assume  $a_{j,l}>b_{i,l}$ and $\pi(\gamma(b_{i,l}))=x_l$ (the other cases being similar). Then either $\Theta^{-1}\circ\phi(\ex(I_a))\cap \gamma([b_{i,l},a_{j,l}])\neq \emptyset$ or $\gamma(a_{j,l})$ and $\gamma(b_{i,l})$ belongs to the same $C^\infty$ smooth (open) arc. But as $(x_{l+1}=)\pi(\gamma(a_{i,l}))>\pi(\gamma(b_{i,l}))=x_l\leq \pi(\gamma(a_{j,l}))$, there is in this case $t\in [b_{i,l},a_{j,l}] $ such that the vertical line  is tangent to  $\gamma$ at $\gamma(t)$, i.e. $\underline{e_u}$ is tangent to $\partial \phi(I_a)$ at $\Theta\circ \gamma(t)$. In both  cases there is  $t\in [b_{i,l},a_{j,l}]$ such that $\Theta \circ \gamma(t)$ belongs to $\mathcal{S}$. In particular the number of function $\eta_i^l$ above $]x_l,x_{l+1}[$ is less than or equal to $\sharp \mathcal{S}$.

\end{proof} 
\end{proof}

\section{Reparametrization Lemma}

We prove now a Reparametrization  Lemma of dynamical balls for surface diffeomorphisms as Lemma \ref{main} for interval maps. We follow a general scheme in Yomdin's theory by first approximating locally our given $C^r$ diffeomorphism by polynomials and by then applying semi-algebraic tools. The main novelty in the present paper consists in approximating not only the $C^r$ map but also  the action of its derivative on the unit tangent bundle. A similar approach was developed in \cite{Burguet} to build  symbolic extensions  of $C^r$ smooth surface systems with $r>1$.

\subsection{Local dynamic}
Let $f:M\rightarrow M$ be a $C^r$  diffeomorphism on a Riemanian surface $(M,\| \cdot \|)$ with $r>1$. We fix a point $x\in M$, a scale $\epsilon$ less than the radius of injectivity 
of $(M,\| \cdot \|)$ and two positive integers $p<<n$. Let $\Exp$ be the exponential map on the tangent bundle $TM$. For $0\leq k\leq [n/p]$ (resp. $k=[n/p]+1$) we  let $(E_k,\| \cdot \|_k)$  be the Euclidean space $(T_{f^{kp}x}M, \| \cdot \|_{f^{kp}x})$ (resp.  $(T_{f^{n}x}M, \| \cdot \|_{f^{n}x})$)  and  $B_k$ be its unit ball.  We define the $\epsilon$-rescaled local dynamics of $f^p$ at $x$
till the time $n$ as the following sequence  of functions $\mathcal{F}^n_x(\epsilon,p)=\left(F_k:B_k\subset E_k\rightarrow E_{k+1}\right)_{k=0,...,[n/p]}$ : for any $0\leq k< [n/p]$, we let  $$\ F_k=\left(\Exp_{f^{(k+1)p}x}(\epsilon.)\right)^{-1}\circ f^p\circ \Exp_{f^{kp}x}(\epsilon .)$$
and $$F_{[n/p]}=\left(\Exp_{f^{n}x}(\epsilon.)\right)^{-1}\circ f^{n-[n/p]p}\circ \Exp_{f^{[n/p]p}x}(\epsilon .).$$
We also let $F^l=F_{l-1}\circ...\circ F_0$ for $0<l\leq [n/p]+1$ and $F^0=\Id$ so  that we have $F^{[n/p]+1}=\left(\Exp_{f^{n}x}(\epsilon.)\right)^{-1}\circ f^n\circ \Exp_{x}(\epsilon .)$. When $p$ divides $n$ the image $\Exp_x\left(B\left(\mathcal{F}^n_x(\epsilon,p),[n/p]\right)\right)$ by $\Exp_x(\epsilon.)$ of the unit $[n/p]$-dynamical ball  for $\mathcal{F}^n_x(\epsilon,p)$ is exactly  the $[n/p]$-dynamical ball $B_{f^p}(x,[n/p],\epsilon)$ at $x$ for $f^p$. In the following the point $x$ will always be assumed to be a periodic point with period $n$, so that we have  $(E_{[n/p]+1}, \| \cdot \|_{[n/p]+1})=(E_0, \| \cdot \|_0)$.

\subsection{Covering dynamical balls with hexagons}

We first generalize the notion of hyperbolic hexagon in the context of a smooth surface dynamical system as follows.
A subset of $M$ is called a local $n$-hyperbolic hexagon (resp. a local generalized $n$-hyperbolic hexagon) for $f$ at $x\in \Per_n$  when it is for some $k$  the image by $f^{-k}\circ \Exp_{f^kx}$ of a  $n$-hyperbolic hexagon  (resp. generalized $n$-hyperbolic hexagon) of  the sequence $\mathcal{F}^n_{f^kx}(\epsilon,p)$.  Obviously   Lemma \ref{ff} again holds true with this generalized definition : there is at most one $n$-periodic point  in a generalized local $n$-hyperbolic hexagon for large enough $n$.

 In the lemma below we consider a Riemannian ($C^r$ smooth) surface $(M,\| \cdot \|)$. We denote again by $\| \cdot \|$ the bundle norm  induced by the Riemannian structure of $M$ on  bundle maps $F:TM\rightarrow TM$ over $f:M\rightarrow M$  and we let $m$ be its conorm, i.e. for any $x\in M$  we let 
 $$\| F(x,.)\|=\sup_{v\in T_xM\setminus \{0\}}\frac{\|F(x,v)\|_{f(x)}}{\|v\|_x}$$ 
 and $$m(F(x,.))=\inf_{v\in T_xM\setminus \{0\}}\frac{\|F(x,v)\|_{f(x)}}{\|v\|_x}.$$ 
We fix once and for all a finite $C^r$ atlas $\mathcal{A}$ of $M$. Then the $C^r$ norm $\|f\|_r$ of any smooth map $f:M\rightarrow M$ with respect to $\mathcal{A}$ (the choice of $\mathcal{A}$ will be implicit later on) is the maximum of the $C^r$ norms $\| \cdot \|_r$ (as defined in Subsection \ref{yy})  of $f$ over the collection of relevant charts.

\begin{lemma}\label{main}
Let $f:M\rightarrow M$ be a $\mathcal{C}^r$  surface diffeomorphism with $r>1$ and let $\delta>0$.

 Then for any integer  $s>r$  and for any positive integer $p$ with $p\delta>>s$,   there exists $\epsilon>0$  depending only on  $\|f^p\|_r$,  such that for all positive integers $n$ and for all $x\in \Per_n^\delta$, there exists a finite collection $\mathcal{J}_n(x)$ of subsets of $M$  with the following properties:

\begin{enumerate}[(i)]
\item any $J\in \mathcal{J}_n(x)$ is  a local generalized $n$-hyperbolic hexagon,\\
\item $\Per_n^\delta\cap B(x,n,\epsilon)\subset \bigcup_{J\in \mathcal{J}_n(x)} J$, \\
\item $\log\sharp \mathcal{J}_n(x) \leq [n/p]\frac{2}{r}\lambda^+_n(x,f^p)+ 2[n/p]\left(\frac{1}{r-1}+\frac{1}{s}\right)\lambda_n(x,f^p)+[n/p]A+B,$
\end{enumerate}
where $A$ is a constant depending only on $r,s$, whereas  $B$ is a constant depending only on $f$, $\delta$ and $p$ (not on $x$ and $n$). Also we have used the notations $$\lambda_n(x,f^p):=\frac{1}{[n/p]}\sum_{j=0}^{[n/p]-1}\log\left(\frac{\|T_{f^{pj}x}f^p\|}{m(T_{f^{pj}x}f^p)}\right) \text{ and }$$
$$\lambda^+_n(x,f^p):=\frac{1}{[n/p]}\sum_{j=0}^{[n/p]-1}\log^+\|T_{f^{pj}x}f^p\|.$$
\end{lemma}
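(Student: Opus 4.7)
The plan is to mimic the one-dimensional argument of Lemma \ref{main} (for interval maps) but to lift the reparametrization to the unit tangent bundle, keeping simultaneous track of the spatial position on $M$ and of the direction of the candidate $n$-expanding vector. This turns the problem into a $3$-dimensional reparametrization problem (two spatial, one angular) on which Theorem \ref{firs} applies directly. First I would work with the rescaled local dynamics $\mathcal{F}^n_x(\epsilon,p)=(F_0,\ldots,F_{[n/p]})$ and their natural lift to the projectivized tangent bundle (which is a $3$-dimensional semi-algebraic object), and replace each $F_k$ by a Taylor polynomial $P_k$ of degree $s$ in both spatial and angular coordinates. Since $\|F_k\|_r$ is controlled by $\|f^p\|_r$ after the $\epsilon$-rescaling, choosing $\epsilon$ small enough (depending only on $\|f^p\|_r$) makes the $C^0$ approximation error negligible compared to the hyperbolic expansion $e^{n\delta}$, so that the cones and the bounds of Definition \ref{dsd} survive the linearization.

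Next I would iterate a Yomdin--Gromov reparametrization at each of the $N:=[n/p]$ time steps. Starting from a single Nash chart $]0,1[^3\to E_0\times\mathbb{T}$, I would apply Theorem \ref{firs} (in dimension $3$, possibly with Remark \ref{pr}) to the composition with the polynomial lift of $P_k$ to obtain a family of triangular Nash reparametrizations of $C^s$ norm $\leq 1$. At each step the number of new reparametrizations required to keep the $C^s$ norm under control would be bounded, as in the standard Yomdin estimate, by $\|T_{f^{kp}x}f^p\|^{2/r}\cdot\bigl(\|T_{f^{kp}x}f^p\|/m(T_{f^{kp}x}f^p)\bigr)^{2/(r-1)+2/s}$, where the first factor accounts for the two spatial dimensions (Yomdin in dimension $2$ at $C^r$ regularity), and the second factor for the one-dimensional projective action on the direction (Yomdin applied in dimension $1$, with the distortion correction of size $1/(r-1)$ that already appeared in the interval Lemma \ref{main}, plus a polynomial approximation error of order $1/s$). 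Corollary \ref{compo} then bounds the degree of the iterated composition, which stays polynomial in $s$ and exponential in $N$ with a constant depending only on $r,s$.

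After $N$ iterations each resulting Nash map projects onto an $n$-hyperbolic semi-algebraic subset of $M$ of the form considered in Subsection $6.4$: the $n$-expanding field $e_u$ is read off as the image of the vertical direction $(0,1)$ in the chart, and $e_s$ is reconstructed from the third (angular) coordinate of the chart. Lemma \ref{d} then covers each such set by generalized $n$-hyperbolic hexagons with cardinality polynomial in the degree, and Lemma \ref{ff} ensures at most one $n$-periodic point per hexagon (so item (ii) holds). Multiplying the counts across the $N$ iterations, taking logs, and absorbing the polynomial prefactors into $[n/p]A+B$ (the final $n-[n/p]p<p$ steps being controlled by a constant depending only on $f,\delta,p$) yields exactly the bound in item (iii).

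The main obstacle is the dual control of position and direction. The angular (projective) dynamics is governed by the distortion ratio $\log(\|Tf^p\|/m(Tf^p))$, not by the forward expansion $\log^+\|Tf^p\|$ alone, so a naive two-dimensional Yomdin reparametrization---which would give only the first term $\frac{2}{r}\lambda_n^+$---must be refined on the unit tangent bundle to extract the smaller, distortion-based contribution $2\bigl(\frac{1}{r-1}+\frac{1}{s}\bigr)\lambda_n$ in the third variable. Coupled with this is the bookkeeping needed to check that, after the iterated approximations and reparametrizations, the evolved cones really remain inside the same bicone $(C_u,C_s)\in \mathfrak{C}_\alpha$ (see Remark \ref{touto}), so that the resulting pieces are genuinely \emph{generalized} $n$-hyperbolic hexagons. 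This is why the assumption $p\delta\gg s$ is needed and why the dependence of $\epsilon$ on $\|f^p\|_r$ must be sharp enough to absorb the Taylor error at every step.
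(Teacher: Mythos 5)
Your overall plan — rescale the local dynamics, lift to the (projectivized) tangent bundle, iterate a Yomdin--Gromov reparametrization, then use Lemma~\ref{d} and Lemma~\ref{ff} to pass to generalized $n$-hyperbolic hexagons — is the same high-level strategy as the paper (which in fact uses a $4$-dimensional version tracking both candidate directions in $\mathbb{T}\times\mathbb{T}$, but, as the paper itself remarks at the end of Section~7.2, a $3$-dimensional reduction with $e_u$ frozen at $(0,1)$ would also work). However, two steps that do the actual work are missing or misidentified.

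First, the combinatorial enumeration of derivative profiles is absent. Your per-step count uses the actual quantities $\|T_{f^{kp}x}f^p\|$ and $m(T_{f^{kp}x}f^p)$ along the orbit of $x$, but the reparametrization must cover \emph{every} $y\in\Per_n^\delta\cap B(x,n,\epsilon)$, and the angular rescaling factors needed depend on the growth rates $\|T_{F^ky'}F_k((v_{y'})_k)\|$, $\|T_{F^ky'}F_k((w_{y'})_k)\|$ along the Oseledets directions of $y$, which vary from point to point and are not determined by the orbit of $x$ alone. The paper handles this by enumerating all admissible integer profiles $(\mathcal{A},\mathcal{B})\in\mathcal{E}$ and running Lemma~\ref{tec} once per profile; the number of profiles is bounded via a partition-counting fact, contributing a factor $100^{[n/p]}\exp\big(2(\lambda_n(x,f^p)[n/p]+D)H(p\delta/2)\big)$. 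This is precisely where the hypothesis $p\delta\gg s$ enters — it forces $H(p\delta)<1/s$ and makes the entropy cost of the enumeration absorbable into the $[n/p]A+B$ slack. You instead attribute $p\delta\gg s$ to Taylor-error control, which is not its role.

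Second, you flag the cone-containment problem (correctly citing Remark~\ref{touto}) but do not supply the mechanism that resolves it. Controlling the \emph{aperture} of the evolved cone by $\alpha/2$ via the $1$-Lipschitz property of the reparametrizations is not enough: one must show the image under $TF^{m+1}$ lies in the \emph{same} bicone $(C_u,C_s)$, not merely in some bicone of $\mathfrak{C}_\alpha$. The paper's solution is to use an anchor periodic point $z\in\Per_n^\delta$ lying in the given chart, with $(v_z,w_z)$ pre-positioned (after Step~1's reduction) inside $(C_u(1/2),C_s(1/2))$. Because $z$ is periodic, $\psi_2(x_z,t_z)$ and $\psi_3(x_z,s_z)$ are its Oseledets directions and are therefore \emph{invariant} under $T_{\psi_1(x_z)}F^{m+1}$; this fixed point of the cone action, together with the $\alpha/2$ aperture bound, forces the whole image cone to stay inside $C_u$ (resp. $C_s$). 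This invariance argument is the key idea making the hexagons genuinely $n$-hyperbolic, and a proof that omits it does not close.
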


The upper bound in (iii) looks independent from the parameter $\delta$ but this is not the case as $p$ is chosen large compared to $s/\delta$. In fact by the already mentioned works of Kaloshin one can not replace the subset $\Per^\delta
$ of periodic points  by the set $\Per$ of all periodic points in the above statement. \\

We prove now Theorem \ref{key} assuming Lemma \ref{main}. Let $\mu$ be a $f$-invariant probability measure. We denote by $\chi(\mu,f):=\int \lim_n\frac{1}{n}\log \|T_xf^n\|d\mu(x)$ the largest (maybe nonpositive) Lyapunov exponent of $\mu$. Fix $\delta>0$ and $s\in \mathbb{N}$ with $s\gg r$. We let  $p$ be a positive integer such that $$p\delta\gg s,$$ $$\frac{A}{p}\ll 1,$$ 
$$\frac{1}{p}\int \log^+\|T_xf^p\|d\mu(x)\simeq \chi^+(\mu,f)\text{ and }$$ $$\frac{1}{p}\int \log\left(\frac{\|T_xf^p\|}{m(T_xf^p)}\right)d\mu(x)\simeq \chi(\mu,f)+\chi(\mu,f^{-1}).$$

As there is at most one periodic point in $\Per_n^\delta$ in a local generalized $n$-hyperbolic hexagon, we get  for any periodic measure  $\nu_n$ supported on the orbit of a periodic point $x\in \Per_n^\delta$ with minimal period $n$ and for $k$ with $\epsilon_k\leq \epsilon$ : $$\mathfrak{P}_k(\nu_n)\leq \frac{1}{n}\int \log \sharp\mathcal{J}_n(x)d\nu_n(x).$$  Then for  $n$ large enough compared to $B$ and for $s$ large enough compared to $r$ we have
\begin{eqnarray*}
 \frac{1}{n} \int \log\sharp \mathcal{J}_n(x)d\nu_n(x) & \lesssim & \int \left(\frac{2}{pr}\lambda^+_n(x,f^p)+ \frac{2}{p(r-1)}\lambda_n(x,f^p) \right)d\nu_n(x),\\
 &\lesssim & \frac{2}{pr}\int \log^+\|T_xf^p\|d\nu_n(x)+ \frac{2}{p(r-1)}\int \log\left(\frac{\|T_xf^p\|}{m(T_xf^p)}\right)d\nu_n(x).
 \end{eqnarray*}
When $\nu_n$ is going to $\mu$ then by continuity of the integrand this last term is going to
\begin{eqnarray*}
\frac{2}{pr}\int \log^+\|T_xf^p\|d\mu(x)+  \frac{2}{p(r-1)}\int \log\left(\frac{\|T_xf^p\|}{m(T_xf^p)}\right)d\mu(x)\simeq  \ \ \ \ \ \ \ \ \ \ \ \ \ \ \ \ \ \ \ \  \ \ \ \ \ \   \ \ \ \  \ \ \ \ \ \ \ \\
\ \ \ \ \ \ \ \ \ \ \ \ \ \ \ \ \ \ \ \ \ \ \ \ \ \ \ \ \ \ \ \  \ \     \frac{2}{r}\chi^+(\mu,f)+ \frac{2}{(r-1)}\left(\chi(\mu,f)+\chi(\mu,f^{-1})\right),
\end{eqnarray*}

and thus we conclude that
$$g_{\Per^\delta}^*(\mu)=\lim_k\limsup_{\nu_n\rightarrow \mu} \mathfrak{P}_k(\nu_n) \leq  \frac{2(3r-1)}{r(r-1)}\max\left(\chi^+(\mu,f),\chi^+(\mu,f^{-1})\right).$$
This completes the proof of Theorem \ref{key}.

\subsection{ Reparametrization lemma for the tangent map}

Let $\mathcal{F}=(F_k)_{0\leq k \leq  m}$ be a sequence of $C^r$ maps with $r>1$ from the unit Euclidean ball $B\subset\mathbb{R}^2$ to $\mathbb{R}^2$.
We consider the action of the differential of $\mathcal{F}$ on the (trivial) unit fiber bundle $\mathfrak{E}:=\{(x,u,v), \ x\in B, \ (u,v) \in S_x\times S_x\}$ where $S_x=T_x^1\mathbb{R}^2=\mathbb{T}$ is the unit circle. For any $0\leq k \leq m$ and for any $u\in T_x^1M$ we let $u_k=\frac{T_xF^k(u)}{\|T_xF^k(u)\|}\in T^1M$. For two  sequences of  integers $\mathcal{A}=(a_0,...,a_k,...a_{m})\in \mathbb{Z}^{m+1}$ and $\mathcal{B}=(b_0,...,b_k,...b_{m})\in \mathbb{Z}^{m+1}$  we define
 the associated dynamical balls \begin{eqnarray*}
 B_{T\mathcal{F}}(\mathcal{A},\mathcal{B}):=&\bigcap_{0\leq k\leq m} \Big \{ (x,u,v) &\in B\times \mathbb{T} \times \mathbb{T}\ | \   F^kx\in B(0,1), \\
  & & \|T_{F^kx}F_{k}(u_k)\| \in [e^{a_k},e^{a_k+1}], \\  & &    \|T_{F^kx}F_{k}(v_k)\| \in [e^{-b_k-1},e^{-b_k}]\Big\},
 \end{eqnarray*}

\begin{center}and \end{center}
 \begin{eqnarray*}
 B^+_{T\mathcal{F}}(\mathcal{A},\mathcal{B}):=& \bigcap_{0\leq k\leq m} \Big \{ (x,u,v) & \in B\times \mathbb{T} \times \mathbb{T}\ | \  F^kx\in B(0,2), \\
  & &  \|T_{F^kx}F_{k}(u_k)\| \in [e^{a_k-1},e^{a_k+2}],  \\
 & & \|T_{F^kx}F_{k}(v_k)\| \in [e^{-b_k-2},e^{-b_k+1}] \Big \}. \\
 \end{eqnarray*}
By applying Yomdin's reparametrization method to the tangent map we obtain the following lemma (we postpone its proof in the next section).

\begin{lemma}\label{tec}Let  $\mathcal{F}$ be as above. Then  for all  integers $s>r$, for all $\alpha>0$  and for all sequences $\mathcal{A}$, $\mathcal{B}$ of $m+1$ integers there is a family $\mathcal{I}_m=\mathcal{I}_m(\mathcal{A},\mathcal{B})$   of Nash maps  $\psi=(\psi_1,\psi_2,\psi_3):]0,1[^2\times ]0,1[\times ]0,1[\rightarrow B\times \mathbb{T}\times \mathbb{T}$ continuously extendable on $[0,1]^2\times [0,1]\times [0,1]$ such that:

\begin{enumerate}[(i)]
\item Any $\psi$ is a diffeomorphism onto its image and    $\psi(x,v,w)=(\psi_1(x),\psi_2(x,v),\psi_3(x,w))$ for all $(x,v,w)$ (i.e. $\psi_1$; resp. $\psi_2$; resp. $\psi_3$  depends only on $x$; resp. $x,v$; resp. $x,w$). 
\item
$$B_{T\mathcal{F}}(\mathcal{A},\mathcal{B})\subset \bigcup_{\psi\in \mathcal{I}_m}\psi([0,1]^2\times [0,1]\times [0,1])\subset B^+_{T\mathcal{F}}(\mathcal{A},\mathcal{B})$$ and for any $\psi\in \mathcal{I}_m$, for any $k=0,...,m+1$,  the sets $$\bigcup_{(x,v)\in [0,1]^2\times [0,1]}T_{\psi_1(x)}F^{k}(\psi_2(x,v)) \text{ and }\bigcup_{(x,w)\in [0,1]^2\times [0,1]}T_{\psi_1(x)}F^{k}(\psi_3(x,w))$$ are contained in cones of $\mathbb{R}^2$ with  aperture less then $\alpha/2$,

\item \begin{eqnarray*}\sharp \mathcal{I}_m&\leq BA^m\prod_{0\leq k\leq m}&\max \left(  1,\|F_k\|_r^{\frac{1}{r}}, \left(\frac{\|F_k\|_{r}}{e^{a_k}}
\right)^{\frac{1}{r-1}}, \left( \frac{\|F_k\|_{r}}{e^{-b_k}} \right)^{\frac{1}{r-1}} \right)^2\\ & &\times   \max \left(  1,\left(\frac{\|F_k\|_{r}}{e^{a_k}}
\right)^{\frac{1}{s}}\right)\times \max \left(  1,\left( \frac{\|F_k\|_{r}}{e^{-b_k}} \right)^{\frac{1}{s}}\right),
\end{eqnarray*}
\item for any $\psi\in \mathcal{I}_m$ the degree of $\psi$ is less than $A^{m+1}$,\\
\end{enumerate}
where $A$ is a constant depending only on $r,s$, whereas  $B$ depends only on $\alpha$ and on the $C^r$ norms $\| \cdot \|_r$ of  $\mathcal{F}$  (not on $m$).\\
\end{lemma}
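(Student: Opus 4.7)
The plan is to proceed by induction on $m$, applying at each step Yomdin's algebraic method (Theorem \ref{firs}) to the action of $F_k$ on the base $B$ together with the induced action of its differential on the two fibers $\mathbb{T}$. The triangular form $\psi=(\psi_1(x),\psi_2(x,v),\psi_3(x,w))$ is natural: the expansion constraint on $u$ in the definition of $B_{T\mathcal{F}}(\mathcal{A},\mathcal{B})$ depends only on $(x,u)$, and the contraction constraint on $v$ depends only on $(x,w)$, so the defining conditions decouple in exactly this way. The inductive base case ($m=-1$) is the identity reparametrization on $]0,1[^2\times]0,1[\times]0,1[$ composed with local charts of $B\times \mathbb{T}\times \mathbb{T}$.

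For the inductive step, given $\mathcal{I}_m$, each $\psi\in\mathcal{I}_m$ is refined to handle the new constraint at step $m+1$. I would first approximate $F_{m+1}\circ\psi_1$, together with the induced tangent maps acting on $\psi_2$ and $\psi_3$ (in local charts of $\mathbb{T}$), by their Taylor polynomials of order $s-1$. Because of the inductively controlled $C^r$ norms, the remainders can be made $C^0$-small by further subdividing the cube into $\lesssim \|F_{m+1}\|_r^{1/s}$ pieces per direction; this produces the factors $(\|F_k\|_r/e^{a_k})^{1/s}$ and $(\|F_k\|_r/e^{-b_k})^{1/s}$ in item~(iii), and the slight widening of the brackets in the definition of $B^+_{T\mathcal{F}}$ is precisely what absorbs these $C^0$ errors.

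Next I would apply Theorem \ref{firs} to the three polynomial maps, in the triangular order dictated by the dependence of variables. Reparametrizing $\psi_1$ so that $F_{m+1}\circ\psi_1$ has $C^r$-norm $\leq 1$ costs $\lesssim\|F_{m+1}\|_r^{2/r}$ pieces (two dimensions); then, with $\psi_1$ fixed, reparametrizing $\psi_2(x,v)$ so that the normalized image $T_{\psi_1}F^{m+1}(\psi_2)/\|T_{\psi_1}F^{m+1}(\psi_2)\|$ is a Nash map of controlled norm costs an extra $(\|F_{m+1}\|_r/e^{a_{m+1}})^{2/(r-1)}$ pieces: the exponent $r-1$ arises because normalization by the expansion rate $e^{a_{m+1}}$ eats one derivative. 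A symmetric argument for $\psi_3$ gives the contracting factor. The triangular structure of Gromov's reparametrizations is preserved because the fiber refinements split $v$ and $w$ separately. Finally, condition (ii) is enforced by subdividing each fiber further into finitely many arcs of aperture $\leq\alpha/2$ (at most $C(\alpha)$ of them), absorbed into the constant $B$.

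The degree bound (iv) follows from Theorem \ref{firs} (which gives a degree depending only on $r$, $s$ at each step) combined with the composition rule of Corollary \ref{compo}, producing a uniform exponential growth $A^{m+1}$. The main obstacle is the careful bookkeeping of derivative losses: one must show that the normalization of tangent vectors on $\mathbb{T}$ and the rescaling by the expansion rates combine compatibly with triangular Yomdin reparametrization to yield exactly the stated exponents $1/r$, $1/(r-1)$, $1/s$ (and their squares in two dimensions). A secondary technical point is to check that the $C^0$ errors from the order-$s$ Taylor approximation remain below the widths of the enlarged brackets $[a_k-1,a_k+2]$ and $[-b_k-2,-b_k+1]$ uniformly in $k$, which is ultimately the reason one needs $s>r$ rather than merely $s=r$.
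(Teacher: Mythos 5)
You have the right overall strategy --- induction on $m$, Taylor--Lagrange approximation, the Yomdin--Gromov algebraic lemma, and the composition rule for degrees --- which is exactly the route the paper takes. However, two substantive gaps remain.

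First, you have no substitute for the adapted anisotropic function space that makes the bookkeeping possible. A reparametrization $\psi=(\psi_1,\psi_2,\psi_3)$ is not a single $C^r$ map: $\psi_1$ is $C^r$, while $\psi_2(x,v)$ and $\psi_3(x,w)$ are only $C^{r-1}$ in $x$ and much smoother in the fiber variable. The paper encodes this asymmetry by a norm $\|\cdot\|_{r,s}$ controlling $\partial^\nu\psi_1$ for $|\nu|\leq r$ and $\partial^{\alpha}_x\partial^{\beta}_v\psi_2$ for $|\alpha|\leq r-1$, $|\alpha|+\beta\leq s$, and the induction consists in keeping $\|G^k\circ\psi^m\|_{r,s}\leq 1$ for \emph{every} $k=0,\dots,m+1$ (not merely $k=m+1$ --- this stronger invariant is what yields the aperture condition in (ii) at all intermediate times $k$). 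Your ``Taylor polynomials of order $s-1$'' blurs this anisotropy, and controlling the normalized action $T F_k/\|TF_k\|$ in the inductive step further requires a quantitative fact bounding the derivatives of $u\mapsto u/\|u\|$ by a polynomial in the derivatives of $u$ over a power of $\inf\|u\|$, which your sketch does not supply.

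Second, your attribution of the exponent $1/(r-1)$ is inverted. You claim it comes from reparametrizing $\psi_2$ in the fiber direction, with normalization ``eating one derivative.'' In fact it comes from the \emph{base} subdivision: because $\psi_2$ is only $C^{r-1}$ in $x$, killing the $(r-1)$-st order $x$-derivatives of $e^{-a_{m+1}}\psi_2$ forces subsquares of side at most $(e^{a_{m+1}}/C\|F_{m+1}\|_r)^{1/(r-1)}$, and the squared exponent is simply the two base dimensions. The fiber subdivision is what produces the $1/s$ factors. Moreover, as written you stack the cost of three reparametrizations into a product of $\|F_{m+1}\|_r^{2/r}$, $(\|F_{m+1}\|_r/e^{a_{m+1}})^{2/(r-1)}$ and $(\|F_{m+1}\|_r/e^{-b_{m+1}})^{2/(r-1)}$, but a single base subdivision at the finest scale serves all three constraints simultaneously, so the correct cost is the square of the maximum, not the product; your version would propagate to a worse constant in the final theorem.
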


Assuming the above Reparametrization Lemma we prove now Lemma \ref{main}. We fix once and for all  $\delta>0$ and $s>r$. We also let $p$ and $n\geq p$ be integers as in Lemma \ref{main} and we consider some $x\in \Per_n^\delta$.

  For $y\in \Per_n^\delta$ we let $v_y$ and $w_y$ be respective representatives in $T^1M$  of  the  unstable and stable directions at the hyperbolic periodic point $y$.  \\

\textit{\textsf{1. Reduction to periodic points with uniform angle.}}
For $x\in M$, ${\kappa}\in \mathbb{Z}$  and a cone $C$ in $T_{f^{\kappa}x}M$   we denote by $T\Exp_{f^{\kappa}x}C$ the subset of $(y,v)\in T^1M$ with $y=\Exp_{f^{\kappa}x}y'$ and $v=T_{y'}\Exp_{f^{\kappa}x}(v')$ for some $y'\in T_{f^{\kappa}x}M$ and $v'\in C$.

\begin{Claim}
It is enough in item (ii) of Lemma \ref{main} to cover by local hyperbolic hexagons  the set $$E:=\left\{z\in \Per_n^\delta\cap B(f^{\kappa}x,n,\epsilon) \ | \ (v_z,w_z)\in (T\Exp_{f^{\kappa}x}C_u(1/2),T\Exp_{f^{\kappa}x}C_s(1/2))\right\}$$   for some $0\leq {\kappa}<n$ and for some bicones $(C_u,C_s)$ of $T_{f^{\kappa}x}M\simeq \mathbb{R}^2$   in $\mathfrak{C}_\alpha$, the number $\alpha$ depending only on $\delta$ and $f$.
\end{Claim}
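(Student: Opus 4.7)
For every periodic point $\tilde z \in \Per_n^\delta \cap B(x,n,\epsilon)$, I will exhibit $\kappa = \kappa(\tilde z) \in \{0,\ldots,n-1\}$ and a bicone $(C_u,C_s) \in \mathfrak{C}_\alpha$ (with $\alpha$ depending only on $\delta$ and $f$) such that $f^\kappa \tilde z$ belongs to the set $E$ defined in the claim for those parameters. Writing $\Per_n^\delta \cap B(x,n,\epsilon)$ as the union over all such triples multiplies the covering count by at most $n \cdot |\mathfrak{C}_\alpha|^2 \leq n(4/\alpha+1)^4$; since $\alpha$ depends only on $\delta$ and $f$, this contributes at most $\log n + O_{\delta,f}(1)$ to the logarithmic bound of Lemma~\ref{main}(iii), which is absorbed in the $[n/p]A$ and $B$ terms for $n$ large (as $\log n = o([n/p])$).

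The substantive step is a geometric angle bound: letting $\theta_k$ denote the unsigned angle between the unstable direction $v_{f^k\tilde z}$ and the stable direction $w_{f^k\tilde z}$, I claim some $\kappa$ satisfies $\theta_\kappa \geq 2\alpha$ for $\alpha := c\,\delta\,m(Tf)/\|Tf\|_\infty$ with $c>0$ an absolute constant. Writing $w_k = \cos\theta_k\,v_k + \sin\theta_k\,v_k^\perp$ and projecting $T_{f^k\tilde z}f(w_k) = \nu_k w_{k+1}$ together with $T_{f^k\tilde z}f(v_k) = \mu_k v_{k+1}$ onto the orthonormal frame $(v_{k+1}, v_{k+1}^\perp)$ at $f^{k+1}\tilde z$ yields the key estimate
\[
|\mu_k\cos\theta_k - \nu_k\cos\theta_{k+1}| \;\leq\; \|Tf\|_\infty\,\sin\theta_k.
\]
Assuming $\theta_k < 2\alpha$ for every $k$, this implies $|\mu_k - \nu_k| \leq C_f\,\alpha$, and dividing by $\nu_k \geq m(Tf) > 0$ gives $|\log\mu_k - \log\nu_k| \leq C_f\alpha/m(Tf)$. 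Summing over $k = 0,\ldots,n-1$ and combining with the hyperbolicity relations $\sum_k\log\mu_k \geq n\delta$ and $\sum_k\log\nu_k \leq -n\delta$ leads to $2n\delta \leq n C_f\alpha/m(Tf)$, which contradicts the choice of $\alpha$ as soon as $c$ is small enough.

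Given such a $\kappa$, I cover $\mathbb{R}^2$ by the half-cones $\{C(1/2) : C \in \mathcal{C}_\alpha\}$ (possibly shrinking $\alpha$ by a universal factor) and pick $C_u, C_s \in \mathcal{C}_\alpha$ whose half-cones contain the pull-backs of $v_{f^\kappa\tilde z}$ and $w_{f^\kappa\tilde z}$ through $\Exp_{f^\kappa x}$. Since $\Exp_{f^\kappa x}$ is $C^1$-close to the identity on the $\epsilon$-disc for $\epsilon$ small depending only on $f$, the angle between the two pulled-back vectors stays within $\alpha/2$ of $\theta_\kappa \geq 2\alpha$; the resulting pair is therefore $\alpha$-transverse, i.e.\ $(C_u,C_s) \in \mathfrak{C}_\alpha$.

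The main obstacle is precisely the angle bound, since without it the transversality condition in the definition of $\mathfrak{C}_\alpha$ cannot be enforced. The key insight is that the one-step expansion $\mu_k$ and contraction $\nu_k$ would have to be nearly equal if the stable and unstable directions stayed uniformly close along the orbit, which is incompatible with the Lyapunov exponent gap $2\delta$ accumulated over the full period. All remaining dependencies on $n$ are polynomial and hence absorbed in the exponential bound of (iii).
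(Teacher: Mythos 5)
Your proposal is correct and follows the same strategy as the paper: reduce to a single time $\kappa$ and a single bicone by decomposing the periodic points according to where along the orbit the stable/unstable angle is uniformly large, obtain that angle bound from the Lyapunov exponent gap $2\delta$, and absorb the resulting $n\cdot\sharp\mathfrak{C}_\alpha$ multiplicity into the $[n/p]A+B$ bound. The paper merely asserts the angle bound by appeal to the standard Pesin-theoretic fact; your one-step projection inequality $|\mu_k\cos\theta_k-\nu_k\cos\theta_{k+1}|\leq\|Tf\|_\infty\sin\theta_k$ together with the telescoping argument is a correct and explicit way to prove it, and your accounting of the factor $n\cdot\sharp\mathfrak{C}_\alpha$ supplies a detail the paper leaves implicit.
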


\label{pag}
 Indeed there is such an $\alpha=\alpha(\delta,f)>0$ that for any $(y,v,w)$  with $y\in B(x,n,\epsilon)$ and  $v,w\in T_y^1M$ satisfying  $\|T_yf^n(v)\|\geq e^{\delta n}, \ \|T_yf^n(w)\| \leq e^{-\delta n}$ the unsigned angle between $T_yf^{\kappa}(v)$ and $T_yf^{\kappa}(w)$ is in $]\alpha, \pi-\alpha[ $ for some $0\leq \kappa< n$ (one proves similarly that the angle between the stable and unstable directions is bounded from below by a constant depending only on $\delta$ and $f$ on a set of $\mu$ -positive measure for any ergodic hyperbolic measure with Lyapunov exponents $\delta$-away from $0$). Also $z=f^\kappa y\in B(f^{\kappa}x,n,\epsilon)$ when $x$ and $y$ are $n$-periodic points and thus the desired cover is obtained by taking the pull back  by $f^\kappa$  of hexagons covering  the above subset $E$ of $\Per_n^\delta\cap B(f^{\kappa}x,n,\epsilon)$. 
 Finally we note  that in the Claim we  may choose  the identification $T_{f^{\kappa}x}M\simeq \mathbb{R}^2$ in such a way $C_u$ is centered at $\{0\}\times \mathbb{R}$.\\

\textit{\textsf{2. Choice of the local dynamics.}} Let $m=[n/p]$. We  consider the sequence  $\mathcal{F}=(F_0,F_1,...,F_m)$
obtained by letting $F_l\in \mathcal{F}_x^n(\epsilon,p)$ for $l\neq 0,m$ and $F_0,F_m$ corresponding to the connected pieces of the orbit of $x$ (with length
less than $2p$) starting and finishing at $f^{\kappa}x$, i.e. if $\kappa\in [Kp,(K+1)p]$ with $K<m$ then we let $$F_m=\left(\Exp_{f^{\kappa}x}(\epsilon.) \right )^{-1}\circ f^{\kappa-Kp}\circ \Exp_{f^{Kp}
x}(\epsilon .) \text{  and }$$
$$F_0=\left(\Exp_{f^{(K+1)p}x}(\epsilon.)\right)^{-1}\circ
f^{(K+1)p-{\kappa}}\circ \Exp_{f^{{\kappa}}x}(\epsilon .)$$  Whenever $K=m$ we replace $(K+1)p$  by $n$ in the definition of $F_0$. For simplicity we may assume $\kappa=0$ and thus $\mathcal{F}=\mathcal{F}_x^n(\epsilon,p)$ in the following without loss of generality.\\

\textit{\textsf{3. Fixing the growth of the derivative.}}
For $y\in \Per_n^\delta\cap B(x,n,\epsilon)$ we denote  by $(y',v'_y,w'_y)$ the image of $(y,v_y,w_y)$ through the local chart at $x$ given by the inverse of the exponential map 	at $x$, that is
$y=\Exp_{x}y'$, $v_y=T_{y'}\Exp_{x}(v'_y)$ and $w_y=T_{y'}\Exp_{x}(w'_y)$ (we recall that  we choose $\epsilon$  less than the radius of injectivity of $M$). Later on we consider the following set $\mathcal E$ of sequences $(\mathcal A,\mathcal B)$:
$$\mathcal{E}:=\{(\mathcal{A},\mathcal{B})\in \mathbb{Z}^{m+1}\times \mathbb{Z}^{m+1}, \ \exists y \in \Per_n^\delta\cap B(x,n,\epsilon) \text{ with } (y',v'_y,w'_y)\in B_{T\mathcal{F}}(\mathcal{A},\mathcal{B})\}.$$
We recall that $H:[1,+\infty[\rightarrow \mathbb{R}$ is defined for all $t\geq 1$ by 
$$H(t)=-\frac{1}{t}\log(\frac{1}{t})-(1-\frac{1}{t})\log (1-\frac{1}{t}).$$
\begin{Claim}There is a constant  $D$ depending only on $f$ and $p$ such that
\begin{eqnarray}\label{comb}
\sharp \mathcal{E}&\leq & 100^{[n/p]}\left(e^{\left(\lambda_n(x,f^p)[n/p]+D\right) H(p\delta/2)}\right)^2.
\end{eqnarray}
\end{Claim}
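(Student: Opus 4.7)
The plan is to translate the requirement ``$\exists y\in\Per_n^\delta\cap B(x,n,\epsilon)$ with $(y',v'_y,w'_y)\in B_{T\mathcal{F}}(\mathcal{A},\mathcal{B})$'' into combinatorial constraints on the integer entries of $\mathcal{A}$ and $\mathcal{B}$, and then to bound the number of valid pairs by a stars--and--bars argument combined with the binomial entropy inequality $\binom{N}{k}\leq e^{N\,H(N/k)}$ (valid for $k\leq N/2$, where $H$ is the entropy function recalled just before the claim, so that $H(t)$ is the binary entropy function evaluated at $1/t$). The form $H(p\delta/2)$ of the exponent suggests that the argument must exploit the $\delta$--hyperbolicity of $y$ to force the total Lyapunov budget $\lambda_n(x,f^p)\,m$ to be at least $2n\delta$.

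First I would extract the lower bounds $\sum_k a_k\geq n\delta-D_0$ and $\sum_k b_k\geq n\delta-D_0$. These follow from the telescoping identity $\prod_{k=0}^m\|T_{F^ky'}F_k(u_k)\|=\|T_{y'}F^{m+1}(v'_y)\|$ (and its analogue for $w'_y$) combined with the $\delta$--hyperbolicity $\|T_yf^n(v_y)\|\geq e^{n\delta}$ and $\|T_yf^n(w_y)\|\leq e^{-n\delta}$, corrected by the bi--Lipschitz control of the exponential charts; the constant $D_0$ depends only on $f,p,\delta$ and absorbs both the chart distortion and the floor--type rounding in the definition of $a_k,b_k$. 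In parallel, the obvious upper bounds $a_k\leq\log\|T_{f^{pk}x}f^p\|+O(1)$ and $b_k\leq-\log m(T_{f^{pk}x}f^p)+O(1)$ give, individually, $\sum_k a_k\leq\lambda_n(x,f^p)m+D_1$ and $\sum_k b_k\leq\lambda_n(x,f^p)m+D_1$ (using $\sigma_k,\tau_k\leq\sigma_k+\tau_k$). Adding the four inequalities forces $\lambda_n(x,f^p)\geq 2p\delta-O(1)$, a ``hyperbolicity pays the budget'' observation that drives the whole estimate.

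Since the constraints on $\mathcal{A}$ and on $\mathcal{B}$ are otherwise separate, the count factors as $\sharp\mathcal{E}\leq N_\mathcal{A}\cdot N_\mathcal{B}$, which produces the square in the claimed bound. For $N_\mathcal{A}$ (and symmetrically for $N_\mathcal{B}$), I translate each $a_k$ to a non--negative integer by subtracting its per--index lower bound (at a cost of $O(m)$ in the sum constraint) and apply stars--and--bars to obtain $N_\mathcal{A}\leq\binom{\lambda_n(x,f^p)m+D_2+m+1}{m+1}$. Setting $N:=\lambda_n(x,f^p)m+D_2+m+1$ and $k:=m+1$, we have $N/k\geq\lambda_n(x,f^p)\geq 2p\delta\geq p\delta/2$; since $H$ is decreasing on $[2,\infty)$ the binomial entropy inequality then yields $N_\mathcal{A}\leq e^{NH(N/k)}\leq e^{\lambda_n(x,f^p) m\,H(p\delta/2)+O(m)}$. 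Multiplying the bounds for $N_\mathcal{A}$ and $N_\mathcal{B}$ and absorbing both the per--index $O(1)$ constants and the polynomial stars--and--bars slack into a geometric $100^{[n/p]}$ factor finally yields the claim.

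The main obstacle is the combinatorial bookkeeping in the last step: one has to recognize that the specific exponent $H(p\delta/2)$ arises precisely because the $\delta$--hyperbolicity of the candidate periodic point $y$ forces $\lambda_n\geq 2p\delta$, which, through the monotonicity of $H$ on $[2,\infty)$, majorizes the binomial entropy $H(N/k)\approx H(\lambda_n)$ by $H(p\delta/2)$. A secondary care is to keep the per--index $O(1)$ corrections from contaminating the entropy exponent; they must instead accumulate only into the geometric $100^{[n/p]}$ factor, which is possible exactly because $H(p\delta/2)$ is a constant (independent of $n$) for fixed $p,\delta$.
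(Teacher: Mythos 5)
Your argument is correct and follows essentially the same route as the paper's own proof: both arguments exploit the $\delta$-hyperbolicity of the witness periodic point to force the total distortion budget $\lambda_n(x,f^p)[n/p]$ to exceed $cn\delta$, both reparametrize the sequence entries by non-negative integers and count via a stars-and-bars / binomial-entropy estimate, and both use the monotonicity of $H$ on $[2,\infty)$ together with the lower bound $\sigma\geq p\delta/2$ on the average per-index budget to land on the factor $H(p\delta/2)$ in the exponent. The only genuine presentational difference is that the paper introduces auxiliary positive sequences $l'_k,\,l''_k$ (measuring how far the stable, resp.\ unstable, increment deviates from the conorm, resp.\ norm, of $T_0F_k$), applies the combinatorial Fact to those, and only afterwards converts back to $(\mathcal{A},\mathcal{B})$ at the cost of the $100^{[n/p]}$ multiplicity factor; you instead translate the $a_k$ and $b_k$ directly by their per-index lower bounds, which produces the same defect variables and the same budget. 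One small inaccuracy worth flagging: the intermediate bounds you state as $\sum_k a_k\leq\lambda_n[n/p]+D_1$ and $\sum_k a_k\geq n\delta-D_0$ have errors of size $O([n/p])$ (from per-index rounding and from the last partial block $F_m$), not a constant independent of $n$; but you correctly note at the end that such $O([n/p])$ slack is absorbed by the geometric factor $100^{[n/p]}$, so this does not break the argument. Likewise $N/k$ need not dominate $\lambda_n(x,f^p)$ itself, only $\lambda_n(x,f^p)\cdot[n/p]/([n/p]+1)\geq p\delta/2$, which is what actually enters the monotonicity step.
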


 Let us prove this claim. We consider  the sequences $(l'_k(y))_{0\leq k\leq m}$ and $(l''_k(y))_{0\leq k\leq m}$ of positive integers  defined for all $ k=0,... ,m$ and for all $y\in \Per_n^\delta\cap B(x,n,\epsilon)$ as follows (recall  $(v_{y'})_k=\frac{T_yF^k(v_{y'})}{\|T_yF^k(v_{y'})\|}\in T^1M$ and similarly $(w_{y'})_k=\frac{T_yF^k(w_{y'})}{\|T_yF^k(w_{y'})\|}\in T^1M$):
$$l'_k(y)=\left[\log\left(\frac{ \|T_{F^k y' }F_k\left((w_{y'})_k\right)\|}{m(T_{F^k y'}F_k)}\right)\right]+1$$
and
$$l''_k(y)=\left[\log\left(\frac{ \|T_{F^ky'}F_k \|}{\|T_{F^ky'}F_k\left((v_{y'})_k\right) \|}\right)\right]+1.$$

Clearly we have
$$\max(l'_k(y),l''_k(y))\leq \left[\log\left(\frac{ \|T_{F^ky'}F_k \|}{m(T_{F^ky'}F_k)}\right)\right]+1.$$
Then we may choose $\epsilon>0$ so small compared to $1/ \|f^p\|_{\min(2,r)}$ that for any $y\in B(x,n,\epsilon)$ we have
$$ \log\left(\frac{ \|T_{F^ky'}F_k \|}{m(T_{F^ky'}F_k)}\right)\leq  \log\left(\frac{ \|T_{0}F_k \|}{m(T_{0}F_k)}\right)+1.$$

Since $x$ is a saddle $n$-periodic point with Lyapunov exponents $\delta$-away from zero  we have
$$\sum_{k=0}^m\log\left(\frac{ \|T_{0}F_k \|}{m(T_{0}F_k)}\right)\geq n\delta.$$
Observe also that $$\left|\sum_{k=0}^m\log\left(\frac{ \|T_{0}F_k \|}{m(T_{0}F_k)}\right)-m\lambda_n(x,f^p)\right|<5\sup_{z\in M, \ 0\leq k\leq 2p}\log\left(\frac{ \|T_{z}f^k \|}{m(T_{z}f^k)}\right).$$
We will use the following combinatorial fact \cite{Burguet}:

\begin{Fact}For any real number $\sigma>0$ the number of sequences of positive integers  $(k_0,...,k_m)$ with $\sum_{i=0}^m k_i \leq (m+1) \sigma$ is less than $e^{(m+1)\sigma H(\sigma)}$.
\end{Fact}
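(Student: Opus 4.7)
My plan is to bound the count by a binomial coefficient and then apply the standard entropy bound on binomial coefficients.

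First, since each $k_i \geq 1$, the sum is automatically $\geq m+1$, so the set is empty unless $\sigma \geq 1$ and the statement is vacuous otherwise; assume this and set $N := \lfloor (m+1)\sigma \rfloor$. The number of compositions of a given integer $S$ into exactly $m+1$ positive parts is $\binom{S-1}{m}$ by stars and bars. Summing over $S = m+1, \ldots, N$ and using the hockey stick identity
\[
\sum_{j=m}^{N-1} \binom{j}{m} = \binom{N}{m+1},
\]
the total count of admissible sequences equals $\binom{N}{m+1}$.

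Next I would invoke the classical entropy upper bound
\[
\binom{n}{k} \leq e^{n\, h(k/n)}, \qquad h(p) := -p\log p - (1-p)\log(1-p),
\]
which follows from $1 \geq \binom{n}{k} p^k(1-p)^{n-k}$ evaluated at $p = k/n$ and is valid for all $0 < k < n$. Applied with $n = N$ and $k = m+1$, a short monotonicity check (the map $t \mapsto t\, h(1/t)$ is nondecreasing for $t \geq 1$, equivalently $t\log t - (t-1)\log(t-1)$ is nondecreasing) allows one to replace the integer $N$ by the real number $(m+1)\sigma$ in the exponent. Since the paper's $H$ is defined precisely so that $H(\sigma) = h(1/\sigma)$, the resulting bound is exactly $e^{(m+1)\sigma H(\sigma)}$.

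The argument is essentially combinatorial and detached from the dynamical content of the paper, so I expect no genuine obstacle: both ingredients (the stars-and-bars composition count with hockey stick, and the entropy estimate for binomial coefficients) are entirely standard. The mildly delicate step is the monotonicity that permits passing from the integer $N$ to the real number $(m+1)\sigma$ in the exponent, but this is a one-line calculus exercise; strictness of the inequality claimed in the statement is also automatic for $\sigma > 1$, owing to the polynomial slack in the entropy bound away from the boundary.
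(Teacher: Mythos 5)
Your proof is correct. The paper does not give its own proof of this Fact --- it cites it directly from the earlier reference \cite{Burguet} (Burguet, \emph{Symbolic extensions in intermediate smoothness on surfaces}, 2012) --- so there is no in-text argument to compare against; your route is the natural elementary one and every step checks out. Counting compositions of each admissible total $S$ into $m+1$ positive parts by stars-and-bars and collapsing the sum with the hockey-stick identity gives exactly $\binom{N}{m+1}$ with $N = \lfloor (m+1)\sigma\rfloor$; the entropy bound $\binom{n}{k}\leq e^{n\,h(k/n)}$ is standard; and the passage from the integer $N$ to the real $(m+1)\sigma$ rests on the monotonicity of $t\mapsto t\,h(1/t) = t\log t - (t-1)\log(t-1)$ on $[1,\infty)$, whose derivative $\log\bigl(t/(t-1)\bigr)$ is indeed positive. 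The one caveat is the degenerate boundary case $\sigma=1$, where the count is exactly $1$ and the bound $e^{(m+1)\cdot 1\cdot H(1)}=e^{0}=1$ is an equality rather than a strict inequality as literally asserted; you flagged this yourself, and it is immaterial for the paper since the Fact is invoked with $\sigma\geq p\delta/2\gg 1$ (likewise, for $0<\sigma<1$ the right-hand side is formally undefined since $H$ is defined only on $[1,+\infty[$, but the count is then zero and the case is vacuous).
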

When applying the above Fact to the sequences $(l'_k(y))_{0\leq k\leq m}$ and $(l''_k(y))_{0\leq k\leq m}$ with $$(m+1)\sigma=\sum_{k=0}^m\left(\log\left(\frac{ \|T_{0}F_k \|}{m(T_{0}F_k)}\right)+1\right)$$
we obtain that the number of such sequences of positive integers satisfy  for some constant  $D$ depending only on $f$ and $p$:
\begin{eqnarray*}
\sharp\{(l'_k(y))_{0\leq k\leq m},(l''_k(y))_{0\leq k\leq m}) \}&\leq &e^{2\left(\lambda_n(x,f^p)m+D\right) H(p\delta/2)}.
\end{eqnarray*}\\
Indeed  with  $D=5\sup_{z\in M, \ 0\leq k\leq 2p}\log\left(\frac{ \|T_{z}f^k \|}{m(T_{z}f^k)}\right)$ we have : $$(m+1) \sigma \leq m\lambda_n(x,f^p)+D \text{ and}$$
$$\sigma \geq \frac{n\delta}{m+1}\geq \frac{p\delta}{2}>>1.$$
This implies the inequality (\ref{comb}) of the claim as
 for $\epsilon>0$ small enough we have also $$\left| l'_k(y)-\left[\log T_{F^k y' }F_k\left((w_{y'})_k\right)\right]+\log m(T_{0}F_k) \right|< 3,$$
$$\left|l''_k(y)-\left[\log \frac{1}{\|T_{F^ky'}F_k\left((v_{y'})_k\right) \|}\right]-\log \|T_{0}F_k \| \right|< 3.$$\\

\textit{\textsf{4. The $n$-hyperbolic structure.}}
To obtain the combinatorial Inequality (\ref{comb}) we  just needed that  $x$ belongs to $\Per_n^\delta$ (in the definition of $\mathcal{E}$ we could have considered   general triples $(y',v',w')$ with $y\in B(x,n,\epsilon)$ and $v',w'\in T^1_yM$).   We fix  now sequences $\mathcal{A}=(a_0,...,a_k,...a_{m})$ and $\mathcal{B}=(b_0,...,b_k,...b_{m})$ with $(\mathcal{A},\mathcal{B})\in \mathcal{E}$, in particular we have
  $$\sum_{k}a_k\geq n\delta \text{ and }\sum_kb_k\geq n\delta.$$ Here we have used the fact that the sequences $\mathcal{A}$ and $\mathcal{B}$ are associated to a triple $(y,v_y,w_y)$ with $y\in \Per_n^\delta$ and $v_y,w_y$ being the usual unstable and  stable spaces of the hyperbolic point $y$. The maps $(F_k)_k$ in $\mathcal{F}$ are defined on  the tangent spaces  along the orbit of $x$. By a change of fixed isometric charts, we may assume  $F_k:B\subset \mathbb{R}^2\rightarrow \mathbb{R}^2$ (we did not change the notations for simplicity). Then we  apply Lemma \ref{tec} to $\mathcal{F}$  with $2s$ instead of $s$. Let $\psi=(\psi_1,\psi_2,\psi_3)\in  \mathcal{I}_m$ such that there is $z\in \Per_n^\delta$  with $ (v_z,w_z)\in (T\Exp_{x}C_u(1/2),T\Exp_{x}C_s(1/2))$ and $(z',v'_z,w'_z)$ in the image of $\psi$. Let $(x_z,t_z,s_z)\in [0,1]^2\times [0,1]\times [0,1]$ be such that $(z',v'_z,w'_z)=\psi(x_z,t_z,s_z)$.

  \begin{Claim} The sets $\psi_1(]0,1[^2)$ are  $(m+1)$-hyperbolic sets for $\mathcal{F}$.
  \end{Claim}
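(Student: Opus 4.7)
The natural candidates for the $(m+1)$-expanding and $(m+1)$-contracting fields on $U_{m+1} := \psi_1(]0,1[^2)$ are obtained by freezing the last two arguments of $\psi$: fix any $v_0, w_0 \in \,]0,1[$ (for instance $v_0 = t_z$ and $w_0 = s_z$) and set
$$e_u(\psi_1(x)) := \psi_2(x, v_0), \qquad e_s(\psi_1(x)) := \psi_3(x, w_0).$$
These are well-defined smooth unit vector fields, since by item (i) of Lemma \ref{tec} the map $\psi_1$ is a Nash diffeomorphism onto its image and $\psi_2, \psi_3$ take values in $\mathbb{T}$. The bicone required by Definition \ref{dsd} will be the already-fixed $(C_u, C_s) \in \mathfrak{C}_\alpha$.

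The first task is to check the cone inclusions $e_u \subset C_u$, $e_s \subset C_s$ and $T F^{m+1}(e_u) \subset C_u$, $T F^{m+1}(e_s) \subset C_s$. For the first pair, apply item (ii) of Lemma \ref{tec} with $k = 0$: the set $\{\psi_2(x,v) : (x,v) \in [0,1]^2 \times [0,1]\}$ lies in a single cone $D$ of aperture less than $\alpha/2$. This $D$ contains the point $v'_z = \psi_2(x_z, t_z)$, which by the preliminary reduction step lies in $C_u(1/2)$, a cone of angular radius $\alpha/4$ concentric with $C_u$ (angular radius $\alpha/2$). An elementary angular estimate then forces $D \subset C_u$, hence $e_u \subset C_u$. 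The same argument with $k = m+1$ shows that $\{T_{\psi_1(x)} F^{m+1}(\psi_2(x,v))\}$ sits in a cone $D'$ of aperture less than $\alpha/2$; since $z$ is an $n$-periodic hyperbolic point of $f$ with unstable direction $v_z$, the vector $T_{z'} F^{m+1}(v'_z)$ is a (possibly negative) scalar multiple of $v'_z$ and therefore still lies in $C_u(1/2)$ (cones in Section 5 being symmetric under $v \mapsto -v$). The same angular estimate gives $D' \subset C_u$. The argument for $e_s$ and $C_s$ is identical.

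The second task is the exponential estimate. The inclusion $\psi([0,1]^3) \subset B^+_{T\mathcal{F}}(\mathcal{A}, \mathcal{B})$ provides, for every $y \in U_{m+1}$ and every $0 \le k \le m$, the lower bound $\|T_{F^k y} F_k((e_u(y))_k)\| \geq e^{a_k-1}$, so telescoping yields
$$\|T_y F^{m+1}(e_u(y))\| \geq e^{\sum_k a_k - (m+1)} \geq e^{n\delta - (m+1)}.$$
Since $n \geq pm$ and $p$ was chosen with $p\delta$ very large compared to $1$ (indeed compared to $s$), this is at least $Ce^{(m+1)\delta'}$ for any prescribed $\delta' \in \,]0,\delta[$ and some $C = C(p,\delta,\delta') > 0$. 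The dual upper bound $\|T_y F^{m+1}(e_s(y))\| \leq C^{-1} e^{-(m+1)\delta'}$ follows in the same way from $\|T_{F^k y} F_k((e_s(y))_k)\| \leq e^{-b_k + 1}$ and $\sum_k b_k \geq n\delta$. Together with the cone verifications, these bounds verify Definition \ref{dsd} for some $(C,\delta',\alpha)$ with $C$ bounded below independently of $x$ and $n$. The one genuinely delicate point is the cone bookkeeping of the second paragraph: it rests crucially on the reduction step, which anchors the concrete stable and unstable directions of $z$ inside $C_s(1/2)$ and $C_u(1/2)$, so that the thin cones of aperture $<\alpha/2$ produced by Lemma \ref{tec} are forced to sit inside the full bicone $(C_u,C_s)$ rather than merely in some arbitrary pair of $\mathfrak{C}_\alpha$—a distinction already flagged in Remark \ref{touto} as essential for Lemma \ref{ff}.
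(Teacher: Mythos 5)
Your proposal is correct and takes essentially the same route as the paper: define the $(m+1)$-expanding and $(m+1)$-contracting fields on $\psi_1(]0,1[^2)$ by freezing the last arguments of $\psi$, obtain the exponential growth and decay rates by telescoping the $B^+_{T\mathcal{F}}(\mathcal{A},\mathcal{B})$ inclusion together with $\sum_k a_k,\sum_k b_k\geq n\delta$, and use the periodicity and hyperbolicity of $z$ plus the anchoring $(v_z,w_z)\in (T\Exp_xC_u(1/2),T\Exp_xC_s(1/2))$ and the aperture bound of Lemma \ref{tec}(ii) to pin both $e_u,\;TF^{m+1}e_u$ inside $C_u$ and $e_s,\;TF^{m+1}e_s$ inside $C_s$, which is exactly the subtlety flagged by Remark \ref{touto}. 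You have in fact corrected the paper's local transposition of the labels (the expanding field is indeed $\psi_2$, living in $C_u$); the only other difference is cosmetic ordering of the cone and growth verifications.
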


   Indeed   the vector fields $(\psi_1(x),\psi_2(x,t_z))_{x\in \psi_1(]0,1[^2)}$ and $(\psi_1(x),\psi_3(x,s_z))_{x\in \psi_1(]0,1[^2)}$ defines an hyperbolic structure on $\psi_1(]0,1[^2)$ by letting $e_s(\psi_1(x))=\psi_2(x,t_z)$ and $e_u(\psi_1(x))=\psi_3(x,s_z)$, because for any $x\in ]0,1[^2$
  we have $\psi(x)\in  B^+_{T\mathcal{F}}(\mathcal{A},\mathcal{B})$ by item $(ii)$ of Lemma \ref{tec} and thus :
\begin{itemize}
\item  $\log \|T_{\psi_1(x)}F^{m+1}\left( \psi_2(x,t_z)\right)\|\geq \sum_{k=0}^{m} (a_k-1)\gtrsim mp\delta,$
\item $ -\log  \|T_{\psi_1(x)}F^{m+1}\left((x,\psi_3(x,s_z)\right)\|\geq  \sum_{k=0}^{m} (b_k-1)\gtrsim mp\delta.$
\end{itemize}

 Moreover since the angle between  $T_{\psi_1(x)}F^{m+1}\left( \psi_2(x,v_z)\right)$ and  $T_{\psi_1(y)}F^{m+1}\left( \psi_2(y,v_z)\right)$, respectively
  between  $T_{\psi_1(x)}F^{m+1}\left( \psi_3(x,w_z)\right)$ and  $T_{\psi_1(y)}F^{m+1}\left( \psi_3(y,w_z\right)$, is less than $\alpha/2$ for any $y\in ]0,1[^2$. But as pointed out in Remark \ref{touto} it is not enough to control the aperture of the two cones given by the images of $e_s$ and $e_u$ under $TF^{m+1}$. We need also to check that these 
  images lie in $C_s$ and $C_u$ respectively. As $z$ is in $\Per_n^\delta$ the vectors  $\psi_2(x_z,t_z)$ and $\psi_3(x_z, s_z)$ are the usual stable and unstable (Oseledets) directions of the saddle hyperbolic periodic point $z$. Therefore they are invariant under $T_{\psi_1(x_z)}F^{m+1}$. Since we have $ (v_z,w_z)\in (T\Exp_{x}C_u(1/2),T\Exp_{x}C_s(1/2))$ we get finally for any $x\in ]0,1[^2$:
  \begin{itemize}
\item $ \psi_2(x,v_z)$ and $T_{\psi_1(x)}F^{m+1}\left( \psi_2(x,v_z)\right)$ lie in $C_s$,
\item $\psi_3(x,v_z)$ and $T_{\psi_1(x)}F^{m+1}\left((x,\psi_3(x,w_z)\right)$ lie in $C_u$. 
\end{itemize}

As already mentioned (see the discussion below Definition \ref{dsd}) we may assume the $n$-expanding field $e_u$ to be constant equal to the oriented center $(0,1)$ of the cone $C_u$. \\

\textit{\textsf{5. Cover by hexagons.}}
For any Nash map  $\psi=(\psi_1,\psi_2,\psi_3)$ as above,
 the set $\psi_1(]0,1[^2)$ may be covered by a collection of generalized $(m+1)$-hyperbolic hexagons for $\mathcal{F}$ with cardinality less than $P\left(\deg(\psi)\right)$ for some universal polynomial $P$ by Lemma \ref{d}. 
According to the  upper bound on the degree of $\psi$ given by item (iv) of Lemma \ref{tec} we have for some universal constant $A$:\\

 \text{\textit{Any  $\psi_1(]0,1[^2)$ may be covered by a collection $\mathcal{C}(\psi)$ of generalized $(m+1)$-hyperbolic   }}
\begin{eqnarray} \label{sem}\textit{hexagons  with }\sharp \mathcal{C}(\psi) \leq A^{m+1}.\end{eqnarray}\\

\textit{\textsf{6. Bounding the cardinality of the cover.}}
 By letting $\epsilon>0$ small enough when defining the local dynamics  $\mathcal{F}_x^n(\epsilon,p)$ we may assume  for all $k$ that:
 $$\|F_k\|_r\simeq \|T_{0}F_k\| \text{ since we have }F_k(0)=0$$
and then $$\max\left(\frac{\|F_k\|_{r}}{e^{a_k}}, \frac{\|F_k\|_{r}}{e^{-b_k}}\right) \lesssim \frac{\|T_{0}F_k\|}{m(T_{0}F_k)},$$
so that  by Lemma \ref{tec} (iii)  we get (the constants $A$ and $B$ may change at each of the following steps):
\begin{eqnarray*}
\sharp \mathcal{I}_m(\mathcal{A},\mathcal{B})&  \leq  BA^m\sup_{(\mathcal{A},\mathcal{B})}\prod_{0\leq k\leq m}&\max \left(  1,\|F_k\|_r^{\frac{1}{r}}, \left(\frac{\|F_k\|_{r}}{e^{a_k}}
\right)^{\frac{1}{r-1}}, \left( \frac{\|F_k\|_{r}}{e^{-b_k}} \right)^{\frac{1}{r-1}} \right)^2\\ 
& &\times   \max \left(  1,\left(\frac{\|F_k\|_{r}}{e^{a_k}}
\right)^{\frac{1}{2s}}\right)\times \max \left(  1,\left( \frac{\|F_k\|_{r}}{e^{-b_k}} \right)^{\frac{1}{2s}}\right),\\
&\leq  BA^m\prod_{0\leq k<m}&\max \left(  1,\|T_0F_k\|^{\frac{1}{r}}, \left(\frac{\|T_0F_k\|}{m(T_{0}F_k)}
\right)^{\frac{1}{r-1}}, \left( \frac{\|T_0F_k\|}{m(T_{0}F_k)} \right)^{\frac{1}{r-1}} \right)^2,\\
& &\times   \max \left(  1,\left(\frac{\|T_0F_k\|}{m(T_{0}F_k)} \right)^{\frac{1}{s}}\right).
\end{eqnarray*}

Lastly observe $[n/p]\lambda^+_n(x,f^p)\simeq \sum_{k}\log^+\|T_0F_k\|$  and similarly
$[n/p]\lambda_n(x,f^p)\simeq  \sum_{k}\log\left(\frac{\|T_0F_k\|}{m(T_{0}F_k)}\right)$. Therefore we obtain

\begin{eqnarray}\label{geom}
\sharp \mathcal{I}_m(\mathcal{A},\mathcal{B}) &\leq  & BA^me^{[n/p]\frac{2}{r}\lambda^+_n(x,f^p)+ 2[n/p]\left(\frac{1}{r-1}+\frac{1}{2s}\right)\lambda_n(x,f^p)}.
\end{eqnarray}\\

\textit{\textsf{7. Conclusion.}}
If $H$ is a generalized $(m+1)$-hyperbolic hexagon for $\mathcal{F}$ then $\exp_x H$ is a local generalized  $n$-hyperbolic hexagon at $x$ for the corresponding $n$-hyperbolic structure. Thus the collection $\mathcal{J}_n(x)$ of local generalized $n$-hyperbolic hexagons given by  $$\mathcal{J}_n(x):=\bigcup_{(\mathcal{A},\mathcal{B})\in \mathcal{E}}\left(\bigcup_{\psi \in \mathcal{I}_m(\mathcal{A},\mathcal{B})}\exp_x \mathcal{C}(\psi)\right)$$ satisfies the conclusion Lemma \ref{main}. Indeed one checks easily that $$\Per_n^\delta\cap B(x,n,\epsilon)\subset \bigcup_{J\in \mathcal{J}_n(x)}  J.$$ Moreover
  by combining Inequalities (\ref{comb}), (\ref{sem}) and (\ref{geom}) and by choosing $p$ so large that $H(p\delta)<1/s$ we get with $A$  depending only on $r,s$ and  $B$  on $f$, $\delta$ and $p$: $$\log\sharp \mathcal{J}_n(x) \leq [n/p]\frac{2}{r}\lambda^+_n(x,f^p)+ 2[n/p]\left(\frac{1}{r-1}+\frac{1}{s}\right)\lambda_n(x,f^p)+[n/p]A+B.$$
This completes the proof of Lemma \ref{main}.

\begin{rem}
The cone $C_u$ being fixed centered at the $y$-axis it would have been equivalent to
reparametrize the dynamical ball \begin{eqnarray*}
\bigcap_{0\leq k\leq m} \Big \{ (x,u) &\in B\times \mathbb{T} \ | &   F^kx\in B(0,1), \|T_{F^kx}F_{k}(u_k)\| \in [e^{a_k},e^{a_k+1}], \\  & &    \|T_{F^kx}F_{k}(v_k)\| \in [e^{-b_k-1},e^{-b_k}\|\Big\},
 \end{eqnarray*}
with $v=(0,1)$ in Lemma \ref{tec}. Even if it involves four dimensional maps we prefer the  general version  given in Lemma \ref{tec}.
\end{rem}

\subsection{Proof of the reparametrization Lemma}
We prove now the reparametrization lemma (Lemma \ref{tec}) for the tangent map stated in the previous subsection. For simplicity we assume $r\in \mathbb{N}\setminus \{0,1\}$. The general case follows the same lines. 

\subsubsection{An adapted space of smooth functions}

 For any integer $s>r$ we let $C^{r,s}_\mathfrak{E}(]0,1[^2\times ]0,1[\times ]0,1[)$ (resp. $D^{r,s}_\mathfrak{E}(]0,1[^2\times ]0,1[\times ]0,1[)$)  be the set of functions
   from $]0,1[^2\times ]0,1[\times ]0,1[$ to $\mathbb{R}^2\times\mathbb{R}^2\times \mathbb{R}^2$ (resp. $\mathbb{R}^2\times \mathbb{R}\times \mathbb{R}$)    of the form $\psi(x,v,w)=\left(\psi_1(x),\psi_2(x,v),\psi_3(x,w)\right)$ for $(x,v,w)\in]0,1[^2\times ]0,1[\times ]0,1[$ where:
\begin{itemize}
\item   the map   $\psi_1$ admits bounded continuous partial derivatives $\partial^{\gamma}\psi_i$ with $\gamma\in \mathbb{N}^2$ satisfying $|\gamma|\leq r$,
\item  for $i\in \{2,3\}$ the maps $\psi_i$ admit bounded continuous partial derivatives $\partial^{\gamma}\psi_i$ with $\gamma=(\gamma_1,\gamma_2)\in \mathbb{N}^2\times \mathbb{N}$ satisfying $|\gamma_1|\leq r-1$ and $|\gamma_1|+\gamma_2\leq s$.
   \end{itemize}
In particular the map $\psi_1$ (resp. $\psi_2$, $\psi_3$) is $C^r$ (resp. $C^{r-1}$). Moreover the maps $\psi_2$, $\psi_3$ and their derivatives w.r.t. $x$ of order less than $r$ are $C^{s-r+1}$ w.r.t. $v$ and $w$. In fact we will consider in the following only such maps $\psi$ where $\psi_2$ and $\psi_3$ and their derivative w.r.t. $x$ are real analytic w.r.t. $v$ and $w$.  In other terms we can take $s$ as large as we want.

    We endow these vector spaces  with the following norm $\|\cdot\|_{r,s}$. For $1\leq k\leq r$ and $0\leq l\leq s $ we first let $p_{k,l}(\psi)$  be the maximum of the supremum norms of  $\partial_x^{\nu} \psi_1$,  $\partial_x^{\alpha_1}\partial_v^{\alpha_2}\psi_2$ and $\partial_x^{\beta_1}\partial_w^{\beta_2}\psi_3$ over all  $\nu,\alpha_1,\beta_1\in \mathbb{N}^2$, $ \alpha_2,\beta_2\in \mathbb{N}$ with $|\nu |= k$,  $|\alpha_1|\leq k-1$, $|\alpha_1|+\alpha_2=l$, $|\beta_1|\leq k-1$ and  $|\beta_1|+\beta_2=  l$. When $k=0$ we let  $p_{0,l}(\psi)$ be the supremum norm of $\psi_1$ (for any $l$).  Then we let $\|\psi\|_{r,s}$ be the norm defined as:
      $$\|\psi\|_{r,s}:=\max_{0\leq k\leq r,0\leq l\leq s} p_{k,l}(\psi).$$
      
 For a $C^s$ map  $\psi$ we recall the $C^s$ norm $\|\psi\|_s$ is given  by the maximum over  $\nu\in \mathbb{N}^4$  with $|\nu|\leq s$ of  the supremum norms of $\partial^\nu\psi$. In particular when a map $\psi$ in $C^{r,s}_\mathfrak{E}(]0,1[^2\times ]0,1[\times ]0,1[)$ is $C^s$ we have $\|\psi\|_{r,s}\leq \|\psi\|_s$.


  \subsubsection{Proof of Lemma \ref{tec} by induction on the length of $\mathcal{F}$}

We fix $\mathcal{F}$, $\mathcal{A}$, $\mathcal{B}$, $\alpha>0$ and $s>r$ as in the statement of the lemma. We consider the
(nonautonomous) cocycle $\mathcal{G}$ over $\mathcal{F}$ given by the  normalized  action of $\mathcal{F}$ on the unit
tangent bundle. More precisely we let  $\mathcal{G}=(G_k)_{0\leq k\leq m}$ be the following  sequence of functions of $\mathfrak{E}$ : 
 for all  $(x,v,w)\in \mathfrak{E}$,  
$$G_k(x,v,w)=(F_kx,v_k,w_k)=\left( F_k(x), \frac{T_xF_k(v)}{\|T_xF_k(v)\|},
\frac{T_xF_k(w)}{\|T_xF_k(w)\|}\right).$$ 
Also we define  the iterated sequence $(G^k)_{0\leq k\leq m+1}$ 
inductively by $$G^0=\Id$$ and for $0\leq k\leq m $ and for all $(x,v,w)\in \mathfrak{E}$ with $x\in \bigcap_{l=0}^kF^{-l}B$, 
$$G^{k+1}(x,v,w)=G_{k}\circ G^k(x,v,w).$$

By an induction on the length  $m$ of the sequence $\mathcal{F}$ we  will prove the following claim.
\begin{Claim}
For  any $m\in \mathbb{N}\cup\{-1\}$ and for any sequences $\mathcal{A}$ and $\mathcal{B}$ of $m+1$   integers,  there is   a family of   Nash maps $\Psi_m=\{\psi^m\}$ with $\psi^m:]0,1[^2\times ]0,1[\times ]0,1[\rightarrow B\times\mathbb{T}\times \mathbb{T}$ 
and a universal constant $A'$ such that:
\begin{enumerate}
\item $\psi^m(x,v,w)=\left(\psi^m_1(x),\psi^m_2(x,v),\psi^m_3(x,w)\right)$, 
\item   for any $0\leq k\leq m+1$, the map $G^k\circ \psi^m$ belongs to $C^{r,s}_\mathfrak{E}(]0,1[^2\times ]0,1[\times ]0,1[)$ and
 $$\|G^k\circ \psi^m\|_{r,s}\leq 1,$$
\item $$B_{T\mathcal{F}}(\mathcal{A},\mathcal{B}) \subset \bigcup_{\psi^m\in \Psi_m}\psi^m([0,1]^2\times [0,1]\times [0,1])\subset  B^+_{T\mathcal{F}}(\mathcal{A},\mathcal{B}),$$
\item \begin{eqnarray*}
\sharp \Psi_m & \leq
A'&\max \left(  1,\|F_{m}\|_r^{\frac{1}{r}}, \left(\frac{\|F_{m}\|_{r}}{e^{a_{m+1}}}
\right)^{\frac{1}{r-1}}, \left( \frac{\|F_{m}\|_{r}}{e^{-b_{m+1}}} \right)^{\frac{1}{r-1}} \right)^2\\ 
&  &\times \max\left(1,\left(\frac{\|F_{m}\|_{r}}{e^{a_{m+1}}}
\right)^{\frac{1}{s}}\right)\times\max\left(1,\left( \frac{\|F_{m}\|_{r}}{e^{-b_{m+1}}} \right)^{\frac{1}{s}} \right)\\
& &\times\sharp \Psi_{m-1}.
\end{eqnarray*} 
\item for any map $\psi^m\in \Psi_m$ there is $\psi^{m-1}\in \Psi_{m-1}$ and a Nash map $\phi_m:]0,1[^2\times ]0,1[\times ]0,1[\rightarrow ]0,1[^2\times ]0,1[\times ]0,1[$ with $\deg(\phi_m)\leq A' $ such that
$\psi^m=\psi^{m-1}\circ \phi_m$. Moreover for any $\psi^{-1}\in \Psi_{-1}$ we have $\deg(\psi^{-1})\leq A'$. 
\end{enumerate}
\end{Claim}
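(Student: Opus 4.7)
The statement will be proved by induction on $m \ge -1$, the inductive mechanism being to pass from $\Psi_{m-1}$ to $\Psi_m$ by composing each existing reparametrization with additional triangular Nash reparametrizations chosen so that the newly required bound $\|G^{m+1}\circ \psi^m\|_{r,s}\le 1$ holds. For the base case $m=-1$ there is no dynamical constraint, only the geometric requirement that $\psi^{-1}$ parametrize a piece of $B\times\mathbb{T}\times\mathbb{T}$ by a triangular Nash map with $C^{r,s}$ norm at most $1$; since $B\times \mathbb{T}\times \mathbb{T}$ is a fixed semi-algebraic set of bounded degree, it can be covered by a universal finite number of such charts, each of the form $(x,v,w)\mapsto (\psi_1^{-1}(x),\psi_2^{-1}(x,v),\psi_3^{-1}(x,w))$ with $\psi_2^{-1},\psi_3^{-1}$ independent of $x$ and given by standard arcs of $\mathbb{T}$. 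All items then hold with $\sharp\Psi_{-1}$ and $\deg(\psi^{-1})$ bounded by a universal constant $A'$.

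For the inductive step, fix $\psi^{m-1}\in \Psi_{m-1}$. The inductive hypothesis guarantees $\|G^k\circ\psi^{m-1}\|_{r,s}\le 1$ for $0\le k\le m$; what can fail is $\|G^{m+1}\circ\psi^{m-1}\|_{r,s}\le 1$ since $G^{m+1}=G_m\circ G^m$ loses control by a factor depending on $\|F_m\|_r$ and on the propagation rates $e^{a_m},e^{-b_m}$ in the $v$- and $w$-directions. Writing $G^{m+1}\circ\psi^{m-1}$ coordinatewise and differentiating, the $x$-component $F_m\circ F^m\circ\psi_1^{m-1}$ has $C^r$ norm controlled by $\|F_m\|_r$ (via Fa\`a di Bruno since the $C^r$ norm of the inner map is at most $1$), while the $v$- and $w$-components $(v_{m+1})$ and $(w_{m+1})$ are rational in one derivative of $F_m$ (hence $C^{r-1}$ in $x$ and analytic in $v,w$) and scale by $e^{-a_m}$, $e^{b_m}$ respectively because of the normalization. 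I would therefore apply the Yomdin--Gromov reparametrization lemma (Theorem~\ref{firs}) four times in a triangular fashion, each time to a Nash map whose domain is a cube of dimension $2$, $1$, $1$ respectively, and whose degree is controlled via Corollary~\ref{compo} by the degrees produced at the previous stage. The four successive reparametrizations  (in $x$ for $F^{m+1}\circ\psi_1^{m-1}$, again in $x$ for the $\partial_x$-derivatives of $v_{m+1},w_{m+1}$ at the cost of a factor $\max(1,\|F_m\|_r/e^{\pm})^{2/(r-1)}$ since only $r-1$ derivatives in $x$ are available, then in $v$ and $w$ separately at the cost of $\max(1,\|F_m\|_r/e^{\pm})^{1/s}$ because we can use up to $s$ derivatives in those variables) yield the combined bound in item~(4) after multiplication by the universal Yomdin constant. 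Preserving the triangular structure at each of these four steps is automatic: the first reparametrization only touches $x$, the second only refines in $x$, and the last two act only on $v$ and $w$ respectively, so the composition with $\psi^{m-1}$ remains of the form $(\psi^m_1(x),\psi^m_2(x,v),\psi^m_3(x,w))$.

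After this reparametrization, I would further restrict the domain to the preimage of $B^+_{T\mathcal{F}}(\mathcal{A},\mathcal{B})$ cut out by the dyadic-type inequalities defining the dynamical ball at step $m$ in the coordinates $(a_m,b_m)$; because these are semi-algebraic conditions of bounded complexity and the already-constructed reparametrizations have bounded $C^r$ norm, this restriction adds only a universal multiplicative constant to $\sharp\Psi_m$ and preserves the covering property (3), thanks to the buffer between $B_{T\mathcal{F}}$ and $B^+_{T\mathcal{F}}$ and the cones of aperture $\alpha/2$ prescribed in item~(ii) of the target lemma. The degree bound on $\phi_m$ (and thus item~5) is a direct consequence of the explicit degree estimate in Theorem~\ref{firs}: each of the four reparametrizations has degree bounded by a function of $r,s$ alone, so $\deg(\phi_m)\le A'$ for a universal $A'=A'(r,s)$; the degree of $\psi^m=\psi^{m-1}\circ\phi_m$ then only enters through Corollary~\ref{compo} when eventually bounding $\deg(\psi^m)\le A'^{\,m+2}$, which yields item~(iv) of Lemma~\ref{tec}.

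The main obstacle is bookkeeping the mixed regularity $C^{r,s}$: the $v$- and $w$-components involve $T F_k$, hence one derivative is already consumed in the $x$-direction, and one must show that after the composition with $\psi^{m-1}$ the residual smoothness in $v,w$ is indeed at least $s$ and survives Yomdin's rescaling, which forces the $1/s$ exponent in item~(4) instead of the usual $1/(r-1)$. A second subtle point is that the aperture condition in item~(ii) of Lemma~\ref{tec} is \emph{not} automatic from item~(2) of the claim, but follows because the $v_k$-image of a box of $v$-size $e^{-a_k}/\|F_k\|_r$ has diameter $\lesssim 1$ in $\mathbb{T}$ and the further reparametrization in $v$ makes this diameter less than $\alpha/2$; the same holds for $w_k$ by symmetry.
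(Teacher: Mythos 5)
Your plan identifies the right ingredients (induction, triangular structure, a Faà di Bruno step, a degree bound via Yomdin--Gromov), and the base case is essentially the paper's. However, the inductive step as you describe it has a structural gap that would prevent it from closing.

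The central issue is that you apply ``the Yomdin--Gromov reparametrization lemma four times'' directly, and you attribute the combinatorial factor
\[
\max\Bigl(1,\|F_m\|_r^{1/r},\bigl(\tfrac{\|F_m\|_r}{e^{a_{m+1}}}\bigr)^{\frac1{r-1}},\bigl(\tfrac{\|F_m\|_r}{e^{-b_{m+1}}}\bigr)^{\frac1{r-1}}\Bigr)^2\cdot\max\Bigl(1,\bigl(\tfrac{\|F_m\|_r}{e^{a_{m+1}}}\bigr)^{\frac1s}\Bigr)\cdot\max\Bigl(1,\bigl(\tfrac{\|F_m\|_r}{e^{-b_{m+1}}}\bigr)^{\frac1s}\Bigr)
\]
to these applications. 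But the version of Yomdin--Gromov used (Theorem~\ref{firs}) applies only to \emph{Nash} (semi-algebraic) maps, and its count of reparametrizations is bounded solely in terms of the \emph{degree} and of $r$ --- it cannot produce any dependence on $\|F_m\|_r$. The maps $G^{k}\circ\psi^{m-1}$ are only $C^r$ smooth, not semi-algebraic, so Theorem~\ref{firs} cannot be applied to them at all, whether once or four times. What the argument actually requires, and what you never introduce, is a preliminary homothetic subdivision of $]0,1[^2\times]0,1[\times]0,1[$ into subsquares of side roughly $\min((C\|F_{m+1}\|_r)^{-1/r},(e^{a_{m+1}}/C\|F_{m+1}\|_r)^{1/(r-1)},\dots)$ and subintervals of side $(e^{\pm}/C\|F_{m+1}\|_r)^{1/s}$; it is the \emph{number of cells in this subdivision} that produces exactly the factor in item~(4), not Yomdin--Gromov. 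You essentially compute the right exponents but assign them to the wrong mechanism.

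Closely related, you omit the Taylor--Lagrange approximation entirely. After the homothetic rescaling one must replace the renormalized map $(1,e^{-a_{m+1}},e^{b_{m+1}})\cdot(TF_{m+1}\circ G^{m+1}\circ\psi^m\circ\phi^{m+1})$ by a polynomial $P$ of degree $<\max(r,s)$ that is $\|\cdot\|_{r,s}$-close to it; it is this polynomial $P$ --- a bona fide Nash map of universally bounded degree --- to which Yomdin--Gromov is then applied a single time (via Remark~\ref{yot}, in two passes to respect triangularity). This yields only a universal constant factor and the degree bound needed for item~(5). Finally, a Faà di Bruno estimate and the Fact on derivatives of $u/\|u\|$ are required to deduce that the actual $C^r$ composition $G^{m+2}\circ\psi^m\circ\phi^{m+1}\circ\theta$ (not the polynomial) still has $\|\cdot\|_{r,s}$ norm at most $1$, which is item~(2). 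Without the polynomial bridge the argument does not get off the ground, and without separating the subdivision count from the Yomdin--Gromov count, item~(4) is not actually derived.
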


The above claim implies easily  Lemma \ref{tec}  as follows. We consider   a fixed cover $\mathcal{U}_\alpha$ of $]0,1[^4= ]0,1[^2\times ]0,1[\times ]0,1[$ by subcubes  with diameter less than $\alpha/\pi$. For any such subcube $S\in \mathcal{U}_\alpha$ we let  $\eta_{S}:]0,1[^4\rightarrow S$ be the affine (homothetic) reparametrization of $S$. Then
the map  $G^k\circ \psi^m$ being $1$-Lipschitz for any $0\leq k\leq m+1$ the image of $G^k\circ \psi^m\circ \eta_S$ is contained in a subcube with diameter less than $\alpha/\pi$. But for any $u,v\in \mathbb{T}$ we have
\begin{eqnarray*}
\|u-v\|&=&2 \left|\sin \left(\frac{\angle (u,v)}{2}\right)\right|,\\
&\geq & 2\frac{|\angle (u,v)|}{\pi}.
\end{eqnarray*}
so that with  $\psi=(\psi_1,\psi_2,\psi_3)=\psi^m\circ \eta_S$  the sets $$\bigcup_{(x,v)\in [0,1]^2\times [0,1]}T_{\psi_1(x)}F^{k}(\psi_2(x,v)) \text{ and }\bigcup_{(x,w)\in [0,1]^2\times [0,1]}T_{\psi_1(x)}F^{k}(\psi_3(x,w))$$ are contained in cones of $\mathbb{R}^2$ with  aperture less then $\alpha/2$.
Together with item (3) of the Claim we get item (ii) of Lemma \ref{tec} by letting $\mathcal{I}_m:=\{\psi^m\circ \eta_S,\ \psi^m\in \Psi_m\text{ and }S\in \mathcal{U}_\alpha\}$. Finally  :
\begin{itemize}
\item for all nonnegative integers $m$ we have \begin{eqnarray*}
\sharp \mathcal{I}_m&\leq & \sharp \mathcal{U}_\alpha \times \sharp \Psi_m,\\
& \leq &  \sharp \mathcal{U}_\alpha \times \sharp \Psi_{m-1} \times 
A' \max \left(  1,\|F_{m}\|_r^{\frac{1}{r}}, \left(\frac{\|F_{m}\|_{r}}{e^{a_{m+1}}}
\right)^{\frac{1}{r-1}}, \left( \frac{\|F_{m}\|_{r}}{e^{-b_{m+1}}} \right)^{\frac{1}{r-1}} \right)^2\\ 
&  &\times \max\left(1,\left(\frac{\|F_{m}\|_{r}}{e^{a_{m+1}}}
\right)^{\frac{1}{s}}\right)\times\max\left(1,\left( \frac{\|F_{m}\|_{r}}{e^{-b_{m+1}}} \right)^{\frac{1}{s}} \right)\\
&\leq &\sharp \mathcal{U}_\alpha \times\sharp \Psi_{-1}\times  A'^{m+1}  \prod_{0\leq k\leq m}\max \left(  1,\|F_k\|_r^{\frac{1}{r}}, \left(\frac{\|F_k\|_{r}}{e^{a_k}}
\right)^{\frac{1}{r-1}}, \left( \frac{\|F_k\|_{r}}{e^{-b_k}} \right)^{\frac{1}{r-1}} \right)^2,\\
& & \ \ \ \ \ \  \ \ \ \ \ \ \ \ \  \ \ \ \ \ \ \ \ \ \ \ \ \ \ \ \ \times \max\left(1,\left(\frac{\|F_k\|_{r}}{e^{a_k}}
\right)^{\frac{1}{s}}\right) \times \max\left(1, \left( \frac{\|F_k\|_{r}}{e^{-b_k}} \right)^{\frac{1}{s}}\right).
\end{eqnarray*}
so that we obtain the estimate (iii) on the number of  reparametrization maps in Lemma \ref{tec}  by letting $B=\sharp \mathcal{U}_\alpha \times\sharp \Psi_{-1}$ and $A\geq A'$,
\item  for all $\psi=\psi^m\circ \eta_S\in \mathcal{I}_m$ there are $\psi_{m-1}\in \Psi_{m-1}$ and a Nash map $\phi_m$ with $\deg(\phi_m)\leq A'$ (where $A'$ is a universal constant) and  $\psi^m=\psi^{m-1}\circ \phi_m$ so that:
\begin{eqnarray*}
\deg(\psi)&=&\deg(\psi^m\circ \eta_S),\\
&=& \deg(\psi^m),\\
&=&\deg(\psi^{m-1}\circ \phi_m),
\end{eqnarray*}
then by Corollary \ref{compo}:
\begin{eqnarray*}
\deg(\psi)&\leq & \deg(\phi_m)^4 \times \deg(\psi^{m-1}),\\
&\leq & A'^4  \deg(\psi^{m-1}),\\
&\leq & A'^{4(m+1)}\times \max_{\psi^0\in\Psi_{-1}}\deg(\psi^{-1}),\\
&\leq & A'^{4m+5}.
\end{eqnarray*}
so that we get the upper bound $\deg(\psi)\leq A^{m}$ in Lemma \ref{tec} (iv) on the degree of the reparametrization maps by  letting $A\geq A'^5$. 
\end{itemize}
This concludes the proof of Lemma \ref{tec}.

\subsubsection{Proof of the induction}

Now we go back to the proof of the claim which follows from Yomdin's approach : we first renormalized the reparametrization maps to kill the derivatives of order $r$ and then we apply Yomdin-Gromov Lemma to the Taylor-Lagrange interpolating polynomials of degree $r-1$ to conclude. \\

Let  $\mathcal{A}=(a_k)_{k\in \mathbb{N}}$ and $\mathcal{B}=(b_k)_{k\in \mathbb{N}}$ be two sequences of integers. By arguing inductively  on $m$ we build the family of Nash maps $\Psi_m$ for the sequences   $\mathcal{A}^m$ and $\mathcal{B}^m$ given respectively by the $m+1$-first terms of  $\mathcal{A}$ and $\mathcal{B}$.\\

\textit{\textsf{1. The base case of the induction.}} We let $\mathfrak{A}=(\zeta_1,..., \zeta_p)$ be a Nash
atlas of $\mathbb{T}$, i.e. the maps $\zeta_l$, $1\leq l\leq p$, are Nash diffeomorphisms from $]-1,1[$ to $\mathbb{T}$ with $\bigcup_{l=1,...,p}\zeta_l(]-1,1[)=\mathbb{T}$.

For the family $\Psi_{-1}=\{\psi^{-1}\}$ it is enough to let $\psi^{-1}=(\psi_1^{-1},\psi_2^{-1},\psi_3^{-1})$ be such that  $\psi_1^{-1}$ are homotheties 
 covering the unit ball $B$ whereas $\psi_2^{-1}(x,v)$ and $\psi_3^{-1}(x,w)$ do not depend on $x$ and are equal to  $\zeta_{i}(v)$ and $\zeta_{j}(w)$  for $1 \leq i,j\leq p$. 
  
    We assume now the collection  $\Psi_m$ already built for some integer $m\in \mathbb{N}\cup\{-1\}$. \\

\textit{\textsf{2. Killing the highest derivatives by homothetic renormalizations.}} The tangent map $TF_{m+1}$ acts naturally on $\mathfrak{E}$ by letting for any $(x,v,w)\in \mathfrak{E}$:  $$TF_{m+1}(x,v,w)=(F_{m+1}x, T_xF_{m+1}v, T_xF_{m+1}w).$$  

As the map $\rho:=G^{m+1}\circ \psi^m$ belongs to $C^{r,s}_\mathfrak{E}(]0,1[^2\times ]0,1[\times ]0,1[)$ it has the form 
$\rho(x,v,w)=\left(\rho_1(x),\rho_2(x,v),\rho_3(x,w)\right)$ for $(x,v,w)\in]0,1[^2\times ]0,1[\times ]0,1[$.
In this case the Faa di Bruno  formula  the derivatives of the composition   $TF_{m+1} \circ G^{m+1}\circ \psi^m$ gives for some universal polynomial $R$ depending only on $r$ and $s$:
$$\|TF_{m+1}\circ G^{m+1}\circ \psi^m\|_{r,s}\leq \|F_{m+1}\|_{r} R\left(\|G^{m+1}\circ \psi^m\|_{r,s}\right).$$

By induction hypothesis we have $\|G^{m+1}\circ \psi^m\|_{r,s}\leq 1$. Thus for some constant $C=C(r,s)$ we get
\begin{eqnarray}\label{zzz}\|TF_{m+1}\circ G^{m+1}\circ \psi^m\|_{r,s}&\leq & C \|F_{m+1}\|_{r}.
\end{eqnarray}

We cover the unit square $]0,1[^2$ into isometric open subsquares
$\{\mathcal S\}$ with diameter less than $$\min \left(
 \left(\frac{1}{C\|F_{m+1}\|_{r}}\right)^{\frac{1}{r}} ,
 \left(\frac{ e^{a_{m+1}}}{C\|F_{m+1}\|_{r}} \right)^{\frac{1}{r-1}},
 \left( \frac{ e^{-b_{m+1} } }{C\|F_{m+1}\|_{r}}
 \right)^{\frac{1}{r-1}} \right). $$  Similarly we cover  the second and third factor $]0,1[$   by isometric open  subintervals $\{\mathcal R\}$ and $\{\mathcal T\}$ with diameter  respectively less than
 $$  \left(\frac{ e^{a_{m+1}}}{C\|F_{m+1}\|_{r}}
\right)^{\frac{1}{s}} \text{ and }\left( \frac{ e^{-b_{m+1}}}{C\|F_{m+1}\|_{r}} \right)^{\frac{1}{s}}.$$ 
 
 For any $\psi=(\psi_1,\psi_2,\psi_3)\in  C^{r,s}_\mathfrak{E}(]0,1[^2\times ]0,1[\times ]0,1[)$ and $(u,v,w)\in (\mathbb{R}^+)^3$ we denote  by  $(u,v,w)\cdot \psi$ the function $(u\psi_1,v\psi_2,w\psi_3)$. Then for any $\mathcal{S},\mathcal{R},\mathcal T$ we let $\phi^{m+1}=(\phi_1^{m+1},\phi_2^{m+1},\phi_3^{m+1})$ be the coordinatewise affine (homothetic) reparametrization $\phi^{m+1}:]0,1[^2\times]0,1[\times ]0,1[\rightarrow \mathcal{S}\times \mathcal{R}\times \mathcal{T} $ so that
\begin{eqnarray*}
I&:= & p_{r,s}\left((1,e^{-a_{m+1}},e^{b_{m+1}})\cdot{}(TF_{m+1}\circ G^{m+1}\circ \psi^m\circ \phi^{m+1})\right),\\
 & \leq & p_{r,s}\left(\left(\mathfrak{a}_m,\mathfrak{b}_m,\mathfrak{c}_m
\right)\cdot  ( TF_{m+1}\circ G^{m+1}\circ \psi^m\right),
\end{eqnarray*}
with $\mathfrak{a}_m,\mathfrak{b}_m,\mathfrak{c}_m$ defined as follows
\begin{itemize}
\item $\mathfrak{a}_m=\frac{1}{C\|F_{m+1}\|_{r}}$,
\item $\mathfrak{b}_m=\max_{\alpha,\beta}\frac{\left(\min(1, \mathfrak{a}_m e^{a_{m+1}})\right)^{\beta/s+|\alpha|/(r-1)}}{e^{a_{m+1}}}$,\\
\item $\mathfrak{c}_m=\max_{\alpha,\beta}\frac{\left(\min(1, \mathfrak{a}_m e^{-b_{m+1})}\right)^{\beta/s+|\alpha|/(r-1)}}{e^{-b_{m+1}}}$,
\end{itemize}
where the maxima holds over $(\alpha, \beta)\in \mathbb{N}^2\times \mathbb{N}$ with $|\alpha|\leq r-1$ and $|\alpha|+\beta=s$.
For such pairs $(\alpha,\beta)$ we have $\frac{\beta}{s}+\frac{|\alpha|}{r-1}\geq \frac{\beta+|\alpha|}{s}\geq 1$. Thus we get
$$\mathfrak{b}_m\leq \mathfrak{a}_m \text{ and } \mathfrak{c}_m\leq \mathfrak{a}_m,$$
and therefore by Inequality (\ref{zzz}):
$$I\leq p_{r,s}\left(\frac{ TF_{m+1}\circ G^{m+1}\circ \psi^m}{C\|F_{m+1}\|_{r}} \right)\leq 1.$$\\

\textit{\textsf{3. Taylor-Lagrange approximation.}}
In our setting the multivariate Taylor-Lagrange Inequality may be written as follows for $\psi=(\psi_1,\psi_2,\psi_3)\in C^{r,s}_\mathfrak{E}(]0,1[^2\times ]0,1[\times ]0,1[)$ at $(x_0,v_0,w_0)\in ]0,1[^2\times ]0,1[\times ]0,1[$:
\begin{eqnarray*}
\|\psi(x,v,w)-\sum_{\nu\in \mathbb{N}^2,\ |\nu|=r-1}\frac{(x-x_0)^\nu}{\nu !}(\partial^\nu\psi_1(x_0),0,0)& &\\
 -\sum_{(\alpha,\beta,\gamma)}\frac{(x-x_0)^\alpha(v-v_0)^\beta(w-w_0)^\gamma}{\alpha ! \beta ! \gamma !} \partial_x^\alpha\partial_v^\beta\partial_w^\gamma\psi(x_0,v_0,w_0)\|_{r,s}& \leq C' p_{r,s}(\psi),&
\end{eqnarray*}
where the last sum holds over all $(\alpha,\beta,\gamma)\in \mathbb{N}^2\times\mathbb{N}\times
\mathbb{N}$ with $|\alpha|<r-1$, $|\alpha|+\beta<s$ and $|\alpha|+\gamma<s$ and where $C'$ depends only on $r$ and $s$ (for $r\notin \mathbb{N}$ we should here consider the Taylor-Lagrange polynomial of degree $[r]$). One may change the constant $C$ in the previous paragraph 2., so that we have  $$I=p_{r,s}\left((1,e^{-a_{m+1}},e^{b_{m+1}})\cdot{}(TF_{m+1}\circ G^{m+1}\circ \psi^m\circ \phi^{m+1})\right)\leq 1/C'.$$ Thus there is a polynomial $P=(P_1,P_2,P_3):\mathbb{R}^2\times \mathbb{R} \times \mathbb{R}\rightarrow \mathbb{R}^2\times\mathbb{R}^2\times \mathbb{R}^2$     with $P(x,v,w)=(P_1(x),P_2(x,v),P_3(x,w))$ for all $(x,v,w)\in \mathbb{R}^2\times \mathbb{R}\times \mathbb{R}$ and with $\deg_t(P_1)< r $ and $\deg_t(P_2),\deg_t(P_3)< s$  such that
$$II:=\|P-(1,e^{-a_{m+1}},e^{b_{m+1}})\cdot{}\left(TF_{m+1}\circ G^{m+1}\circ \psi^m\circ \phi^{m+1}\right)\|_{r,s}\leq 1.$$\\

\textit{\textsf{4. Applying Yomdin-Gromov Lemma.}}
 By applying the Yomdin-Gromov algebraic lemma to $P$ for the smoothness parameter $s$ (see Remark \ref{yot} below), we get Nash maps $\theta:]0,1[^2\times ]0,1[
 \times ]0,1[\rightarrow ]0,1[^2 \times ]0,1[\times ]0,1[$ reparametrizing the  set
 \begin{eqnarray*}
  & P^{-1}\left( B(0,2)\times B(0,1+e)\times B(0,2)\right) &  \\
  & \bigcup  & \\
 & \left(TF_{m+1}\circ G^{m+1}\circ \psi^m\circ \phi^{m+1}\right)^{-1}\left( B(0,1)\times  B(0,e^{a_{m+1}+1}) \times B(0,e^{-b_{m+1}}) \right) &  \\
& \bigcup &\\
& \left(\psi^m\circ \phi^{m+1}\right)^{-1}\left( B_{T\mathcal{F}}(\mathcal{A},\mathcal{B})\right). &
 \end{eqnarray*}
  Moreover the maps $\theta$ have the form $\theta
 (t,s,u)=(\theta_1(t),\theta_2(t,s),\theta_3(t,u))$ for all $(t,s,u) $ where
  the degree of $
 \theta$ is bounded by some universal constant $A=A(s)$. Also we have $\theta\in D^{r,s}_\mathfrak{E}(]0,1[^2\times ]0,1[\times ]0,1[)$ and $P\circ \theta\in C^{r,s}_\mathfrak{E}(]0,1[^2\times ]0,1[\times ]0,1[)$ with 
 $$\|\theta\|_{r,s}\leq \|\theta\|_{s}\leq  1 \text{ and }
 \|P\circ \theta\|_{r,s}\leq  \|P\circ \theta\|_{s} \leq 1.$$
 By applying again Faa di Bruno formula we get for some universal polynomial $R$ depending only on $r$ and $s$:
 \begin{eqnarray*}
 III&:=& \|(1,e^{-a_{m+1}},e^{b_{m+1}})\cdot{}\left(TF_{m+1}\circ G^{m+1}\circ \psi^m\circ \phi^{m+1}\circ \theta \right)\|_{r,s},\\
 &\leq & \|\left(P-(1,e^{-a_{m+1}},e^{b_{m+1}})\cdot{}\left( TF_{m+1}\circ G^{m+1}\circ \psi^m\circ \phi^{m+1} \right)\right) \circ \theta \|_{r,s}+\|P\circ \theta\|_{r,s},\\
 &\leq & R\left(II, \|\theta\|_{r,s}\right) +\|P\circ \theta\|_{r,s}.
 \end{eqnarray*}
 Therefore there is a universal constant $C=C(r,s)$ such that
 \begin{eqnarray}\label{n}
 & & III=\|(1,e^{-a_{m+1}},e^{b_{m+1}})\cdot{}\left(TF_{m+1}\circ G^{m+1}\circ \psi^m\circ \phi^{m+1}\circ \theta\right) \|_{r,s}\leq C.
 \end{eqnarray}
By composing with  $D/C$-homotheties with $D= 1-1/e$  (we already used this trick, e.g. to prove Lemma \ref{tec} from the Claim) we may assume that the $i^{th}$ component, for $i= 2$ and $3$,  of   $TF_{m+1}\circ G^{m+1}\circ \psi^m\circ \phi^{m+1}$   are respectively  $De^{a_{m+1}}$- and $De^{-b_{m+1}}$-Lipschitz. In particular as $D=1-1/e<1<e-1$ the image of $\psi^m\circ \phi^{m+1}\circ \theta$ is contained in $B^+_{T\mathcal{F}}( \mathcal{A}^{m+1},\mathcal{B}^{m+1})$  and 
 \begin{eqnarray}\label{derner} \|\left( TF_{m+1}\circ G^{m+1}\circ \psi^m\circ \phi^{m+1}\circ \theta\right)_i\|_0  \leq &  & \\ \nonumber
\ \ \ \ \ \ \ \  \ \ \  \ \   2\inf_{(t,s)}  \|\left( TF_{m+1}\circ G^{m+1}\circ \psi^m\circ \phi^{m+1}\circ \theta\right)_i(t,s)\|.& &
 \end{eqnarray}
 Moreover these components satisfy also for $\alpha\in \mathbb{N}^2$ and $\beta\in \mathbb{N}$ with $|\alpha|\leq r-1$  and $0\neq |\alpha|+\beta\leq s$:
 
 \begin{eqnarray}\label{derniier}\sup_{(t,s)} \|\partial^{(\alpha,\beta)}\left( TF_{m+1}\circ G^{m+1}\circ \psi^m\circ \phi^{m+1}\circ \theta\right)_i(t,s)\|\leq & \\ \nonumber
\ \ \ \ \ \ \ \ \ \ \ \ \ \ \ \ \ \ \ \ \     \inf_{(t,s)}  \|\left( TF_{m+1}\circ G^{m+1}\circ \psi^m\circ \phi^{m+1}\circ \theta\right)_i(t,s)\|.&
 \end{eqnarray}

\textit{\textsf{5. Control of the $\| \cdot \|_{r,s}$-norm of the normalized action.}} We may also control the derivatives of the normalized action according to the following fact:

 \begin{Fact}There is a homogeneous universal polynomial $R\in \mathbb{R}[X_0,...,X_r]$,  such
that for any non vanishing $C^r$  smooth map $u$ we have   for any $\alpha$ with $|\alpha|=r$: 
$$\sup_x\| \partial^\alpha (u/\|u\|)(x)\|\leq \frac{R(\|u\|_0,\|u\|_1,..., \|u\|_r)}{\inf_x \|u(x)\|^{\deg_t(R)}}.$$
 \end{Fact}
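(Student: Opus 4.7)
The plan is to unfold the normalization $v := u/\|u\|$ using the Leibniz rule and the Faà di Bruno formula, and then to homogenize the resulting bound using the scale invariance of $v$ under $u \mapsto \lambda u$.

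First I would write $v = u \cdot (\|u\|^2)^{-1/2}$ and expand
$$\partial^\alpha v = \sum_{\beta \leq \alpha} \binom{\alpha}{\beta}\, \partial^\beta u \cdot \partial^{\alpha-\beta}\!\left((\|u\|^2)^{-1/2}\right).$$
Applying the Faà di Bruno formula to the composition $g \circ h$ with $g(t) = t^{-1/2}$ and $h = \|u\|^2 = \sum_i u_i^2$, each derivative $\partial^{\alpha-\beta}(\|u\|^2)^{-1/2}$ is a finite sum of terms of the form
$$ c_{m,\mu}\, \|u\|^{-(2m+1)} \prod_{j=1}^{m} \partial^{\mu_j}(\|u\|^2), $$
with $\mu_1 + \cdots + \mu_m = \alpha - \beta$ and $1 \leq m \leq |\alpha-\beta|$. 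Each $\partial^{\mu_j}(\|u\|^2)$ is a bilinear expression in derivatives of the components of $u$ of order at most $|\mu_j| \leq r$.

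Collecting everything, each term of $\partial^\alpha v$ is of the form $N_m/\|u\|^{2m+1}$, where $N_m$ is a product of $2m+1$ partial derivatives of components of $u$ of order at most $r$. Taking the supremum over $x$ yields
$$ \sup_x \|\partial^\alpha v(x)\| \leq \sum_m \frac{Q_m(\|u\|_0,\ldots,\|u\|_r)}{\inf_x \|u(x)\|^{2m+1}}, $$
where each $Q_m$ is a universal homogeneous polynomial of degree $2m+1$ whose coefficients depend only on $r$.

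The last step is homogenization. Let $M$ denote the largest value of $m$ appearing in the sum and set $d := 2M+1$. For each term with $m<M$ multiply numerator and denominator by $\|u\|^{2(M-m)}$, using that $\inf_x\|u(x)\| \leq \|u\|_0$ so $\|u\|^{2(M-m)}\leq \|u\|_0^{2(M-m)}$ bounds the numerator uniformly. This replaces $Q_m/\inf\|u\|^{2m+1}$ by a term with a numerator that is universal homogeneous of degree $d$ and a common denominator $\inf_x\|u(x)\|^d$. Summing we obtain
$$ \sup_x \|\partial^\alpha v(x)\| \leq \frac{R(\|u\|_0,\ldots,\|u\|_r)}{\inf_x \|u(x)\|^d}$$
with $R$ a universal homogeneous polynomial of degree $\deg_t(R)=d$, as desired. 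The consistency of this bound is guaranteed a posteriori by the scale invariance $v(\lambda u) = v(u)$, which forces any such estimate to have matching total weights in numerator and denominator.

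The only genuine bookkeeping difficulty is tracking that the combined polynomial $R$ can indeed be made homogeneous and universal (independent of $u$); this is exactly what the homogenization step above takes care of, at the cost of inflating the degree to the common value $2M+1$.
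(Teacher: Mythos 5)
The paper states this Fact without a proof, so there is nothing to compare against; your argument is a correct proof. Writing $u/\|u\| = u\cdot(\|u\|^2)^{-1/2}$ and applying Leibniz together with the Fa\`a di Bruno formula for $t\mapsto t^{-1/2}$ composed with $\|u\|^2=\sum_i u_i^2$ does express $\partial^\alpha(u/\|u\|)$ as a finite sum over $0\le m\le r$ of terms $N_m/\|u\|^{2m+1}$, where $N_m$ is a sum of products of exactly $2m+1$ partial derivatives of components of $u$ (of order $\le r$), so that $\sup_x\|N_m(x)\|$ is controlled by a universal homogeneous polynomial $Q_m$ of degree $2m+1$ in $\|u\|_0,\dots,\|u\|_r$.

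The only place that deserves a cleaner formulation is the homogenization step, where you say you ``multiply numerator and denominator by $\|u\|^{2(M-m)}$'' even though $\|u\|$ is a function and the denominator $\inf_x\|u(x)\|^{2m+1}$ is a number. What you actually use (and what makes the step valid) is the scalar inequality
$$
\frac{1}{\bigl(\inf_x\|u(x)\|\bigr)^{2m+1}}
=\frac{\bigl(\inf_x\|u(x)\|\bigr)^{2(r-m)}}{\bigl(\inf_x\|u(x)\|\bigr)^{2r+1}}
\le \frac{\|u\|_0^{\,2(r-m)}}{\bigl(\inf_x\|u(x)\|\bigr)^{2r+1}},
$$
since $\inf_x\|u(x)\|\le\sup_x\|u(x)\|=\|u\|_0$. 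Setting $R:=\sum_{m=0}^{r}Q_m\,X_0^{2(r-m)}$ then yields a universal homogeneous polynomial with $\deg_t(R)=2r+1$ and the claimed bound. (As in the paper, the coefficients of $R$ and of the $Q_m$ depend on the fixed domain and codomain dimensions, here $2$, which is the intended notion of universality.)
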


 Together with Equation (\ref{derner}) and Equation (\ref{derniier}) we indeed obtain for another constant $C=C(r,s)$:
$$\|G^{m+2}\circ \psi^m\circ \phi^{m+1}\circ \theta \|_{r,s} \leq C.$$
By composing again by  $1/C$-homotheties we may assume $C=1$.
 Then we let $\Psi_{m+1}$ be the collection of maps $\Psi_{m+1}=\{
 \psi^m\circ \phi^{m+1}\circ \theta\}$. One checks easily that this family  satisfies the properties of the Claim. This concludes the proof of the Reparametrization Lemma \ref{tec}.

  \begin{rem}\label{yot}
  In the above proof we apply Yomdin-Gromov Lemma to a $4$-dimensional Nash map $f:]0,1[^2\times ]0,1[\times]0,1[\rightarrow \mathbb{R}^2\times \mathbb{R}^2\times \mathbb{R}^2$ of the form $$f(x,v,w)=(f_1(x),f_2(x,v),f_3(x,w)).$$ This follows from the version stated in  Subsection \ref{yy}. Indeed we may first apply
  this last version to the map $(x,v)\mapsto (f_1(x),f_2(x,v))$. Then if we let $\phi:]0,1[^2\times ]0,1[\rightarrow ]0,1[^2\times ]0,1[$ with $\phi(x,v)=(\phi_1(x),\phi_2(x,v))$ be the triangular  Nash reparametrization maps we apply again Theorem \ref{firs} to  each map $(x,w)\mapsto (f_1\circ \phi_1 (x) ,f_3(\phi_1(x),w))$. Denoting by $\phi'=(\phi'_1,\phi'_2)$ the resulting reparametrization maps  it is then enough to consider the family  $$\{(x,v,w)\mapsto (\phi_1\circ\phi'_1(x), \phi_2(\phi'_1(x),v), \phi'_2( x,w))\}.$$
  \end{rem}

\begin{rem} From the proof we may in fact explicit the dependence in $\delta$ of $\sharp \Per_n^\delta$. Let us be more precise for a $C^\infty$ surface diffeomorphisms $f$. For any $\gamma>0$ there exists an integer $N=N(f,\delta,\gamma)$ such that for any $n$ larger than $N$ we have
$$\sharp Per_n^\delta\leq \alpha(\delta,f)^{-3} e^{\gamma n}$$
where $\alpha(\delta,f)$ is the angle  as in the proof of Lemma \ref{main} assuming Lemma \ref{tec} on page \pageref{pag}.\\
\end{rem}
\vspace{1cm}
\newpage
\begin{center}

\begin{huge}
Appendices \\
\end{huge}
\end{center}
\vspace{1cm}
\appendix

\section{Growth of $(Per_n^\delta)_n$}
\subsection{ The case of $C^r$ surface diffeomorphisms with $r>1$}
Let $f:M\rightarrow M$ be a $C^r$ with $r>1$ surface diffeomorphism.  For any $\chi>0$ Sarig build a finite to one Markov extension $\pi_\chi:(\Sigma(\mathcal{G}_\chi),\sigma)\rightarrow (M,f)$ such that $
\pi_\chi\left(\Sigma(\mathcal{G}_\chi)\right)$ has full measure for any hyperbolic ergodic invariant measure with Lyapunov exponents $\chi$-away from zero. In fact  for any $\alpha<1$ we may choose the parameters in the construction of $\mathcal{G}_\chi$ so that we have:

\begin{lemma}
$$\pi_\chi\left(\Per(\Sigma(\mathcal{G}_\chi),\sigma)\right)\subset \Per_{\alpha \chi}(M,f).$$
\end{lemma}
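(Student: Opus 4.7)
The plan is to unpack Sarig's construction of $\mathcal{G}_\chi$ and trace how the hyperbolicity parameters labelling its vertices translate into concrete Lyapunov exponent bounds for the projection of a periodic symbolic orbit. First I would recall that the vertices of $\mathcal{G}_\chi$ are ``double Pesin charts'' $\Psi^{p^s,p^u}_x$ indexed by a point $x\in M$ together with two parameters $p^s,p^u$ drawn from a fixed discrete geometric progression whose ratio is a small discretization parameter $\varepsilon_0=\varepsilon_0(\chi)$; these parameters measure the effective room in the nonlinear stable and unstable directions at $x$. An edge $v\to v'$ of $\mathcal{G}_\chi$ is admissible precisely when Sarig's nonlinear overlap conditions hold, and these imply, in particular, that $Tf$ at the corresponding point sends the stable cone associated to $v$ strictly inside the stable cone of $v'$, contracting stable vectors by a factor at most $e^{-\chi+\eta(\varepsilon_0)}$ and expanding unstable vectors by a factor at least $e^{\chi-\eta(\varepsilon_0)}$, with $\eta(\varepsilon_0)\to 0$ as $\varepsilon_0\to 0$.

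Next I would take $\underline{a}\in\Per(\Sigma(\mathcal{G}_\chi),\sigma)$ of minimal period $n$ and set $x=\pi_\chi(\underline{a})$. Since $\pi_\chi\circ\sigma=f\circ\pi_\chi$ and the orbit of $\underline{a}$ is finite, $x$ is $n$-periodic for $f$. The cycle $a_0\to a_1\to\cdots\to a_{n-1}\to a_0$ in $\mathcal{G}_\chi$ produces invariant stable and unstable cone fields along the $f$-orbit of $x$, and composing the single-step estimates along this cycle yields vectors $e^u,e^s\in T_xM$ with $\|T_xf^n e^u\|\ge e^{n(\chi-\eta(\varepsilon_0))}\|e^u\|$ and $\|T_xf^n e^s\|\le e^{-n(\chi-\eta(\varepsilon_0))}\|e^s\|$.

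Consequently $T_xf^n$ has eigenvalues of absolute values $\ge e^{n(\chi-\eta(\varepsilon_0))}$ and $\le e^{-n(\chi-\eta(\varepsilon_0))}$, so $x$ is a saddle hyperbolic $n$-periodic point whose Lyapunov exponents have absolute value at least $\chi-\eta(\varepsilon_0)$. Given $\alpha<1$, it then suffices to fix $\varepsilon_0=\varepsilon_0(\alpha,\chi)$ small enough that $\eta(\varepsilon_0)\le (1-\alpha)\chi$, which is permitted by Sarig's freedom to tune the discretization in the construction of $\mathcal{G}_\chi$, to obtain $x\in\Per_{\alpha\chi}(M,f)$ and therefore the claimed inclusion.

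The main obstacle will be locating the precise single-step hyperbolicity estimates in Sarig's paper and verifying that the per-step loss $\eta$ is genuinely independent of $n$ and tunable by $\varepsilon_0$ (as opposed to being replaced by an accumulating error as one moves along an orbit); once this is granted, the closure of the loop is just a telescoping product and no further Pesin-theoretic input is needed.
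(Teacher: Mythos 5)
Your proposal is correct and follows essentially the same route as the paper: unwind a closed chain of Sarig's overlapping Pesin charts, use the explicit hyperbolic block form of the transition maps (Proposition~3.4 in Sarig) and cone invariance to compose single-step contraction/expansion bounds $e^{\mp\chi}$ with a small per-step error controlled by the discretization parameter, and tune that parameter so the accumulated loss is at most $(1-\alpha)\chi$. The only cosmetic difference is that the paper works with the concrete cone $\{|v|\ge|u|\}$ and the sup-norm and only states the unstable-exponent bound explicitly (the stable bound being symmetric), whereas you record both bounds.
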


\begin{proof}
Any $n$-periodic point in Sarig's graph is given by a closed finite (bi)chain of so called overlapping charts
$\Psi:=(\psi^+_1,\psi^-_1)\rightarrow (\psi^{+}_2,\psi^{-}_2)...\rightarrow (\psi^{+}_n,\psi^{-}_n)=(\psi^+_1,\psi^-_1)$.

By Proposition 3.4 in \cite{sar} the gradient of $f_i:=\left(\psi^+_{i+1}\right)^{-1}\circ f \circ \psi^+_{i}$ 
takes the form $$f_i(u,v)=
\left(Au+h_1(u,v), Bv+h_2(u,v)\right)$$
with $A<e^{-\chi}$, $B>e^{\chi}$ and $\|\nabla h_j\|<\epsilon$ for $j=1,2$ where $\epsilon$ may be chosen small compared to $\chi$ and $1-\alpha$.
In particular for $\epsilon$ small enough the differential map $T f_i$  preserves the cone $\{(u,v)\in \mathbb{R}^2, \ |v| \geq |u|\}$ and   $\|Tf_i(u,v)\|'> e^{\alpha \chi}\|v\|'$ for all $(u,v)$ in this cone with the product norm $\|(x,y)\|'=\max(|x|,|y|)$. In particular the largest Lyapunov exponent at $x=\pi_\chi\left(\Psi\right)$ is bigger then $\alpha\chi$.
\end{proof}

It follows from Gurevic's theory (e.g. see the proof of Theorem 1.1 in \cite{sar}) that whenever $f$ has a measure of maximal entropy  then  for some positive integer $p$
  we have  $\liminf_{p\mid n} \sharp\Per_n^\delta e^{ -n\htop(f)}>0$ for any $0<\delta<\htop(f)$. In particular these lower bounds holds for $C^\infty$ surface diffeomorphisms.\\

\vspace{1cm}

\subsection{The case of $C^{r}$ interval maps with $r>1$}
We need here to introduce a slightly modified version of Buzzi-Hofbauer diagram. This diagram is built from the symbolic dynamic associated to a partition into monotone branches. We do not recall the whole construction and refer the reader to the original paper \cite{Buz} for details. Therein  the author used the "natural partition" given by the complementary set of the critical points. However we may also work with other partitions. Let $\htop(f)>\delta>0$ and $\alpha>0$.
For $C^{r}$ interval maps    $ f$ there exists  a countable partition  $P=P_\alpha$ of $[0,1]\setminus \{f'=0\}$ into intervals $I$ so that    \cite{Buzzz}
\begin{eqnarray}\label{ddd}
\forall  x,y\in I, \ & \frac{|f'(x)|}{|f'(y)|}\leq &e^{\alpha}.
\end{eqnarray}

Then we recall that in the  Markov representation associated to this partition we let an arrow from the vertex $A_{-q}...A_0\in P^{q+1}$ to the vertex $B_{-p}...B_0\in P^{p+1}$ whenever  $p\leq q+1$ and $B_{-p}...B_1=A_{-p+1}...A_0$ and $B_0\cap f^{q+1}\left(\bigcap_{k=0}^qf^{k-q}A_k\right)=f^p\left(\bigcap_{k=0}^pf^{k-p}B_k\right)$.

Buzzi proved that there is a Borel semiconjugation from the Markov shift to the interval map which induces  an entropy-preserving bijection between measures of positive entropy (see also \cite{bbu}). In particular if the interval map admits a maximal measure then so does the Markov shift. In this case applying  the already mentioned works of Gurevic  there is a vertex $\gamma$ in the graph so that the number $a_n$ of closed paths of length $n$ at $\alpha=A_{-q}...A_0$  satisfies for some $p$
\begin{eqnarray}\label{der}
\liminf_{p\mid n}a_n e^{-n \htop(f)}&>&0.
\end{eqnarray}
By letting  $I_\gamma:=f^{q}\left(\bigcap_{k=0}^qf^{k-q}A_k\right)$ these paths corresponds to disjoints $n$-monotone branches $I\subset I_\gamma$ with $I_\gamma\subset f^n(I)$. The number $a'_n$ of the intervals $I$ with  length less than $e^{(\alpha-1)n\htop(f)}|I_\gamma|$ also satisfies $\liminf_{p\mid n}a'_n e^{-n \htop(f)}>0$.
For these intervals $I$ we have
$\sup_{x\in I}|(f^n)'(x)|\geq e^{(1-\alpha)n\htop(f)}$ and therefore according to the distortion property (\ref{ddd}) we get
$$\inf_{x\in I}|(f^n)'(x)|\geq e^{\left((1-\alpha)\htop(f)-\alpha\right)n}.$$ Then
any $n$-periodic point in $I$ has a Lyapunov exponent larger $e^{(1-\alpha)\htop(f)-\alpha}$ which is larger than $\delta$ if one takes $\alpha$ small enough. Thus we conclude that
$$\forall n\in \mathbb{N}\setminus  \{0\}, \ \sharp Per_n^\delta\geq a'_n$$ 
and thus
$$\liminf_{p\mid n} \sharp \Per_n^\delta e^{-n \htop(f)}>0.$$

\newpage
\section{Variational principle for the local periodic growth}
We prove here the variational principle (\ref{varia}) stated in Section 2:
\begin{eqnarray}g_\mathcal{P}^*=\sup_\mu g^*_\mathcal{P}(\mu).
\end{eqnarray}

\begin{proof}
Clearly we have for all $k$ and for all ergodic periodic measures $\nu_n$ with minimal period $n$: $$\mathfrak{P}_k(\nu_n)\leq \frac{1}{n}\sup_{x\in X}\log  \sharp \left( \mathcal{P}_n\cap B(x,n,\epsilon_k)\right),$$
and thus $g_\mathcal{P}^*\geq \sup_\mu g^*_\mathcal{P}(\mu).$ Let us show now the other inequality. Fix $\alpha>0$. We will prove that $g_\mathcal{P}^*\leq \sup_\mu g^*_\mathcal{P}(\mu)+2\alpha$. For any $k$ there is a positive integer $n_k$ and a  point  $x_k$ in $X$ satisfying: 
\begin{eqnarray*}
g^*_{\mathcal{P}}(\epsilon_k)& =& \limsup_n\frac{1}{n}\sup_{x\in X}\log  \sharp \left( \mathcal{P}_n\cap B(x,n,\epsilon_k)\right),\\
&<& \frac{1}{n_k}\log \sharp \left(\mathcal{P}_{n_k}\cap B(x_k,n_k,\epsilon_k)\right)+\alpha.
\end{eqnarray*}
Moreover we may assume the sequence $(n_k)_k$ is going to infinity. 
As already observed we have also for any $p_k\in \mathcal{P}_{n_k}\cap B(x_k,n_k,\epsilon_k)$ the following inclusion: 
$$B(x_k,n_k,\epsilon_k)\subset B(p_k,n_k,2\epsilon_k).$$
We choose such a periodic point $p_k$  that its minimal period $P_k\leq  n_k$ satisfy 
$$\sharp \mathcal{P}_{P_k}\cap B(x_k,n_k,\epsilon_k)\geq \frac{1}{n_k}\sharp \mathcal{P}_{n_k}\cap B(x_k,n_k,\epsilon_k).$$ We also let $(a_l)_l$ be a nondecreasing sequence of positive integers going to infinity with $\epsilon_{a_l}\geq 2\epsilon_l$ for all large enough $l$. Then the associated  periodic measure $\nu_{p_k}$ satisfies for large $k$:

\begin{eqnarray*}
\mathfrak{P}_{a_k}(\nu_{p_k})& = & \frac{1}{P_k}\int\log \sharp \left(\mathcal{P}_{P_k}\cap B(x,P_k,\epsilon_{a_k})\right )d\nu_{p_k}(x),\\
& \geq &  \frac{1}{n_k}\int\log \sharp \left(\mathcal{P}_{P_k}\cap B(x,n_k,2\epsilon_{k})\right )d\nu_{p_k}(x),\\
&\geq & \frac{1}{n_k}\log \sharp \left(  \mathcal{P}_{n_k}\cap B(x_k,n_k,\epsilon_k) \right)-\frac{\log n_k}{n_k},\\
&> &g^*_{\mathcal{P}}(\epsilon_k)-2\alpha.
\end{eqnarray*}
We may assume the period $P_k$ of $p_k$ goes to infinity with $k$ ; otherwise for large $k$ we would have    $ g_\mathcal{P}^*(\epsilon_k)\leq 2\alpha$ and thus $g_\mathcal{P}^*\leq2\alpha$ (recall $\mathcal{P}_n$ is finite for all $n$). For  any  weak-star limit of $(\nu_{p_k})_k$, say $\mu=\lim_k\nu_{p_{\varphi(k)}}$, we have finally :
\begin{eqnarray*}
g^*_\mathcal{P}(\mu)
&\geq & \lim_l \limsup_k \mathfrak{P}_l(\nu_{p_{\varphi(k)}}),\\
&\geq &\limsup_k \mathfrak{P}_{a_{\varphi(k)}}(\nu_{p_{\varphi(k)}}),\\
&\geq & \lim_k g^*_{\mathcal{P}}(\epsilon_{\varphi(k)})-2\alpha= g^*_\mathcal{P}-2\alpha.
\end{eqnarray*}
This concludes the proof of the lemma.
\end{proof}

\newpage

j\section{Proof of Corollary \ref{ccc}}


The  statement below  is a consequence of the main result of \cite{bdo} (Theorem 5.5 and 6.5  therein). We refer to \cite{Dowb} and \cite{BD} for the notion of superenvelope of the entropy structure and its significance in the theory of symbolic extensions.

\begin{theo}\label{refo}\cite{bdo} Let $(X,T)$ be a topological dynamical system and $\mathcal{P}$  an invariant subset of $\Per(T)$ with $\sharp \mathcal{P}_n<+\infty$ for all $n$. We also assume that $X$ admits a basis of neighborhoods \footnotemark[10]
with null measure boundary for any $T$-invariant measure $\mu$ which is not a periodic measure in  $\bigcup_n \left( \Per_n\setminus \Per_n^\delta\right)$.

 Let $E$ be an affine bounded superenvelope of the entropy structure of $(X,T)$ satisfying  $E(\mu)\geq h(\mu)+g_{\mathcal{P}}^*(\mu)$ for all $T$-invariant probability measures $\mu$ (in particular $g_{\mathcal{P}}^*<+\infty$).

Then  there is  a symbolic extension
$\pi:(Y,S)\rightarrow (X,T)$ and a Borel embedding $\psi:B\rightarrow Y$ with $B$ a Borel set with full measure for every ergodic measure except  periodic measures in $\bigcup_n \left( \Per_n\setminus \mathcal{P}_n\right)$ such that $\psi\circ f=\sigma\circ \psi$ and $\pi\circ \psi=\Id_B$ and

$$\sup_{\nu,  \ S^*\nu=\nu \text{ and } \pi^*\nu=\mu}h(\nu)= E(\mu).$$

Moreover the cardinality of the alphabet of $Y$ may be chosen to be less than or equal to $e^{ \max \left( \sup_{\mu}E(\mu),\pper^\mathcal{P} \right) }+1$ with  $\pper^\mathcal{P}:=\sup_{n\in \mathbb{N}\setminus \{0\}}\frac{\log \sharp \mathcal{P}_n}{n}$.
\end{theo}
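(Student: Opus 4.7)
The plan is to deduce Theorem \ref{refo} as a direct consequence of Theorems~5.5 and 6.5 of \cite{bdo}, by translating hypotheses and tracking a few constants. The statement indeed says so, and the only work that remains is to check that our assumptions are exactly those needed by the general machinery, and to extract the alphabet bound in the desired form.

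First, I would verify that the input required by \cite{bdo} matches what is assumed here. The Boyle--Downarowicz machinery (as refined in \cite{bdo}) requires an affine bounded superenvelope $E$ of the entropy structure of $(X,T)$ with a pointwise lower bound that absorbs the local periodic growth: $E(\mu)\ge h(\mu)+g_\mathcal{P}^*(\mu)$ for every invariant $\mu$. This is exactly our assumption, and it is precisely  the gap term shown to matter in Lemma~\ref{cru} and in the variational principle \eqref{varia}. The small-boundary hypothesis also matches: \cite{bdo} needs a basis of neighborhoods with null boundary for every invariant measure outside the excluded periodic set, and ours gives this off $\bigcup_n (\Per_n\setminus \Per_n^\delta)$.

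Second, I would invoke Theorem~5.5 of \cite{bdo} to produce a symbolic extension $\pi:(Y,S)\to(X,T)$ whose fibre entropies realize $E$, namely $\sup\{h(\nu):\pi^*\nu=\mu\}=E(\mu)$ for every $T$-invariant $\mu$. The construction is the standard Boyle--Downarowicz tower: one builds a nested sequence of finite coding partitions whose boundaries carry no mass for the relevant measures, organizes them into a coherent system, and takes the inverse limit of the resulting subshifts. The superenvelope bound $E\ge h+g_\mathcal{P}^*$ is used in one crucial place: it absorbs the local periodic overhead so that orbits in $\mathcal{P}$ can be coded without creating extra entropy in the extension.

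Third, Theorem~6.5 of \cite{bdo} refines the same construction to provide the Borel embedding $\psi:B\to Y$. One takes $B$ to be the set of points whose infinite itinerary under the nested partition sequence uniquely determines them; on $B$ the itinerary map is a Borel bijection onto its image in $Y$, conjugates $T$ with $\sigma$, and satisfies $\pi\circ\psi=\mathrm{Id}_B$. The relaxed small-boundary hypothesis forces $\mu(B)=1$ for every ergodic measure $\mu$ other than the periodic measures we have explicitly excluded, where a null-boundary separating partition simply cannot exist.

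Finally, the alphabet bound comes from the two constraints on the coarsest partition in the tower: it must have at least $e^{\sup_\mu E(\mu)}$ atoms to accommodate the total entropy available at that scale, and it must carry at least $e^{\pper^\mathcal{P}}$ atoms in order to separately label the periodic orbits in $\mathcal{P}$, since each periodic orbit in $\mathcal{P}_n$ must receive its own $n$-code. Taking the maximum of these two lower bounds and adding one reservoir symbol (to avoid coding collisions on the boundary sets) gives the stated bound $e^{\max(\sup_\mu E(\mu),\pper^\mathcal{P})}+1$. The only genuine obstacle is bookkeeping, namely checking that the excluded periodic set in our statement coincides with the one treated in \cite{bdo}; no new analytic content is needed beyond what is already established there.
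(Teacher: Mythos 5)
There is a genuine gap, and it sits exactly where you declare the matter to be ``bookkeeping.'' The theorem you want to quote from \cite{bdo} (Theorem 5.5 there) is \emph{not} stated with the hypothesis $E(\mu)\ge h(\mu)+g_{\mathcal{P}}^*(\mu)$: it is stated with a different function $u_1$ in place of $g_{\mathcal{P}}^*$, namely $u_1=\lim_k\left((\mathfrak{Q}_k)^h\right)^{\tilde{}}$, where $\mathfrak{Q}_k$ is defined only on ergodic periodic measures supported on points of $\mathcal{P}$ with \emph{minimal} period $n$, $(\mathfrak{Q}_k)^h$ is its harmonic extension (set to $0$ on ergodic non-periodic measures), and $(\cdot)^{\tilde{}}$ denotes the upper semi-continuous envelope. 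Your claim that the hypotheses ``match exactly'' is therefore unjustified: a priori $u_1$ and $g_{\mathcal{P}}^*$ are different functions, and the whole content of the deduction is to show that an \emph{affine} superenvelope $E$ with $E-h\ge g_{\mathcal{P}}^*=\lim_k(\mathfrak{P}_k)^{\tilde{}}$ automatically satisfies $E-h\ge u_1$. The paper does this by observing that $E-h$ is harmonic, that $\mathfrak{P}_k\ge(\mathfrak{Q}_k)^h$ on ergodic measures, and then invoking Lemma 8.2.13 as in the proof of Lemma 8.2.14 of \cite{Dowb}, which is the standard tool for comparing a harmonic function dominating a sequence on ergodic measures with the u.s.c.\ envelopes of the harmonic extensions. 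Without this comparison you have not verified the hypothesis of the cited theorem, so the black-box invocation does not go through as written.

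A secondary remark: the middle of your proposal re-narrates, at a heuristic level, the internal construction of \cite{bdo} (coding towers, itinerary maps, a ``reservoir symbol''). None of this is needed once the theorem is correctly invoked as a black box, and none of it substitutes for the missing step above; the alphabet bound and the Borel embedding are part of the quoted statement and require no re-derivation. What you must supply is precisely the $g_{\mathcal{P}}^*$ versus $u_1$ comparison.
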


By \cite{Lin} the existence of the satisfactory basis of neighborhoods is ensured whenever $X$ is finite dimensional. 
Then it follows from the previous works on symbolic extensions for $C^r$ systems with $r>1$ given in \cite{Burguet,dowm} that  the functions 
$$\mu \mapsto h(\mu)+\frac{2(3r-1)}{r(r-1)}\left(\chi^+(\mu,f)+\chi^+(\mu,f^{-1})\right)$$
and 
$$\mu \mapsto h(\mu)+\frac{\chi^+(\mu,f)}{r-1}$$
are respectively superenvelopes of the entropy structure for a $C^r$ surface diffeomorphism  and a $C^r$ interval map  $f$ with $r>1$. Thus, together with Theorems \ref{mainn} and \ref{key} we get Corollary \ref{ccc}.\\

Finally let us explain how we deduce the above theorem from \cite{bdo}. In fact Theorem \ref{refo} corresponds to Theorem 5.5  \footnotemark[10] \footnotetext[10]{In \cite{bdo} the authors work with systems with the small boundary property. But as we only need to embedd periodic points in $\mathcal{P}$ the weaker assumption in Theorem \ref{refo} is sufficient.} in \cite{bdo} where the function $g_{\mathcal{P}}^*$ is replaced by another function 
$u_1$ defined as the limit in $k$ of $\left( (\mathfrak{Q}_k)^h \right)^{\tilde{}}$ where:
\begin{itemize}
\item  $$\mathfrak{Q}_k(\nu_n)= \frac{1}{n}\int\log \sharp \left(\mathcal{Q}_n\cap B(x,n,\epsilon_k)\right )d\nu_n(x),$$ where $\mathcal{Q}_n$ is the subset of $\mathcal{P}$ of periodic points with minimal period $n$ and $\nu_n$ is an ergodic invariant measure supported on $\mathcal{Q}_n$,
\item $(\mathfrak{Q}_k)^h$ is the harmonic 
extension of the function $\mathfrak{Q}_k$  (it was first defined  only for ergodic periodic measures) by letting $
\mathfrak{Q}_k(\nu)=0$ for ergodic non periodic measures,
\item $\left( (\mathfrak{Q}_k)^h \right)^{\tilde{}}$ is the upper semi-continuous envelope  of  $(\mathfrak{Q}_k)^h$, i.e.  the smallest upper semi-continuous function larger than $ (\mathfrak{Q}_k)^h$.
\end{itemize}

Thus we only have to show that any affine superenvelope $E$ of the entropy structure with $E-h\geq g_{\mathcal{P}}
^*=\lim_k(\mathfrak{P}_k)^{\tilde{}}$ also satisfies $E-h\geq u_1=\lim_k(\left( (\mathfrak{Q}_k)^h \right)^{\tilde{}}$. 
As the function $E-h$ is  harmonic and  $\mathfrak{P}_k\geq (\mathfrak{Q}_k)^h$  on ergodic measures this follows from 
Lemma 8.2.13   as in the proof of Lemma 8.2.14 in \cite{Dowb}. This concludes the proof of Theorem \ref{refo} from Theorem 5.5 in \cite{bdo}.

\end{large}
\end{document}